\newcommand{\mumu}{{\boldsymbol{\mu}}}
\newcommand{\CC}{\mathbb{C}}
\newcommand{\FF}{\mathbb{F}}
\newcommand{\QQ}{\mathbb{Q}}
\newcommand{\ZZ}{\mathbb{Z}}
\newcommand{\PP}{\mathbb{P}}
\newcommand{\OOO}{{\mathscr{O}}}
\newcommand{\NNN}{{\mathscr{N}}} 
\newcommand{\EEE}{{\mathscr{E}}}
\newcommand{\p}{\operatorname{p}_{\mathrm{a}}}
\newcommand{\g}{\operatorname{g}}
\newcommand{\red}{{\operatorname{red}}}
\newcommand{\qq}{\mathbin{\sim_{\scriptscriptstyle{\mathbb{Q}}} } }
\newcommand{\qQ}{\operatorname{q_{\QQ}}}
\newcommand{\Sing}{\operatorname{Sing}}
\newcommand{\Singng}{\operatorname{Sing}^{\mathrm{nG}}}
\newcommand{\mult}{\operatorname{mult}}
\newcommand{\Exc}{\operatorname{Exc}}
\newcommand{\Bs}{\operatorname{Bs}}
\newcommand{\Cl}{\operatorname{Cl}}
\newcommand{\rk}{\operatorname{rk}}
\newcommand{\J}{\operatorname{J}}
\newcommand{\aw}{\operatorname{aw}}
\newcommand{\ord}{\operatorname{ord}}
\newcommand{\typec}[1]{$(\mathrm{#1})$}
\newcommand{\typeci}[2]{$(\mathrm{#1}_{#2})$}
\newcommand{\type}[1]{$\mathrm{#1}$}
\newcommand{\typem}[1]{$\mathbf{#1}$}
\newcommand{\types}[2]{$\mathrm{#1}_{#2}$}
\newcommand{\typeA}[1]{$\mathrm{cA}_1^{#1}$}
\newcommand{\typeGor}[1]{$(\mathrm{NC}_{#1})$}
\newcommand{\xref}[1]{{\rm \ref{#1}}}
\theoremstyle{definition}
\theoremstyle{plain}
\newtheorem{theorem}[subsection]{Theorem}
\newtheorem{lemma}[subsection]{Lemma}
\newtheorem{proposition}[subsection]{Proposition}
\newtheorem{corollary}[subsection]{Corollary}
\newtheorem{claim}[subsection]{Claim}
\theoremstyle{definition}
\newtheorem{setup}[subsection]{Set-up}
\newtheorem{definition}[subsection]{Definition}
\newtheorem{assumption}[subsection]{Assumption}
\newtheorem{example}[subsection]{Example}
\newtheorem{remark}[subsection]{Remark}
\renewcommand{\theenumi}{\rm (\roman{enumi})}
\renewcommand{\labelenumi}{\rm (\roman{enumi})}
\begin{document}
\title{On the birational geometry of sextic threefold hypersurface in 
$\PP(1,1,2,2,3)$}

\address{ 
Steklov Mathematical Institute of Russian Academy of Sciences, Moscow, Russian 
Federation
} 
\email{prokhoro@mi-ras.ru}

\author{Yuri~Prokhorov}
\thanks{
This work was supported by the Russian Science Foundation under grant no. 23-11-00033, \url{https://rscf.ru/en/project/23-11-00033/}
}

\begin{abstract}
We investigate birational properties of hypersurfaces of degree $6$ in the 
weighted projective space 
$\PP(1,1,2,2,3)$. In particular, we prove that any such quasi-smooth hypersurface is not rational.
\end{abstract}

\maketitle

\section{Introduction}

We investigate birational properties of hypersurfaces of degree $6$ in the 
weighted projective space 
$\PP(1^2,2^2,3)$. Our interest to consider such hypersurfaces is motivated by 
the following result by
F.~Campana and H.~Flenner.

\begin{theorem}[{\cite{CampanaFlenner}}]
\label{thm:CaFl}
Let $Y$ be a three-dimensional projective variety containing 
a smooth rational surface 
$F$ as a 
Cartier divisor.
Assume that the normal bundle $\NNN_{F/Y}$ of $F$ in $X$ is ample. Then $Y$ is 
either rational or is birationally 
equivalent to a Fano variety $X$ with at worst terminal $\QQ$-factorial 
singularities, where $X$ 
belong to the following list:
\begin{enumerate}

\item
\label{thm:CaFl:3}
a smooth cubic hypersurface in $\PP^4$;

\item
\label{thm:CaFl:2}
a \emph{quartic double solid}, that is, 
a quartic hypersurface in $\PP(1^4, 2)$; 

\item
\label{thm:CaFl:1}
a \emph{double Veronese cone}, that is, a sextic hypersurface in $\PP(1^3, 2,3)$;

\item
\label{thm:CaFl:m}
a sextic hypersurface in $\PP(1^2,2^2,3)$.
\end{enumerate}
\end{theorem}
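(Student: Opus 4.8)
The plan is to run the Minimal Model Program on $Y$ while keeping careful track of the divisor $F$, whose ample normal bundle makes it behave like a hyperplane section. First I would check that $Y$ is uniruled, indeed rationally connected: since $F$ is a smooth rational surface it is covered by free rational curves $C$, for which $\deg\bigl(\NNN_{F/Y}|_C\bigr)>0$, and the exact sequence $0\to\NNN_{C/F}\to\NNN_{C/Y}\to\NNN_{F/Y}|_C\to 0$ shows that such a $C$ deforms off $F$ and sweeps out a neighbourhood of $F$ in $Y$; thus a general point of $Y$ lies on a rational curve meeting $F$, and as $F$ is rationally chain connected so is $Y$, whence (on a smooth model) $Y$ is rationally connected. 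I would also record that $F$ is big: its normal bundle, hence $F|_F$, is ample and $(F^{3})_Y=\bigl(\NNN_{F/Y}^{2}\bigr)_F>0$, and ampleness of the formal neighbourhood of $F$ gives the required growth of $\h^0(Y,\OOO_Y(mF))$.

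Next I would replace $Y$ by a suitable $\QQ$-factorial terminal model and run a $K$-MMP; since $Y$ is uniruled this terminates with a Mori fibre space $\pi\colon X\to Z$, $\dim Z\le 2$. A big prime divisor is never the exceptional divisor of a divisorial extremal contraction (such a contraction is negative on its exceptional divisor, which is therefore not even pseudoeffective when restricted to itself, whereas a big divisor meets every movable curve positively), so the birational transform $F_X$ of $F$ survives on $X$ as a big prime divisor, is birational to $F$ (hence rational), and in particular is not contained in a fibre of $\pi$.

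If $\dim Z=2$ then $\pi$ is a conic bundle and, since $F_X$ meets the general (movable) fibre positively, $F_X$ dominates $Z$; being dominated by the rational surface $F_X$, the surface $Z$ is rational. If $\dim Z=1$ then $\pi$ is a del Pezzo fibration and the same positivity forces $F_X\to Z$ to be surjective, so $Z\cong\PP^1$. In both cases $F_X$ furnishes, after a possible further birational adjustment, a multisection of $\pi$ of controlled degree arising from a rational surface, and from this one deduces that $\pi$ admits a rational section; then $X$, and hence $Y$, is rational. Carrying this out — in particular bounding the degree of $F_X$ over $Z$ and producing the section — is one of the two technical hearts of the argument.

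There remains the case $\dim Z=0$, where $X$ is a $\QQ$-factorial terminal Fano threefold with $\rho(X)=1$ carrying the big prime divisor $F_X$, the transform of $F$. Here I would apply adjunction on $F_X$ to link $(-K_X)|_{F_X}$, $F_X|_{F_X}$ and the normal bundle $\NNN_{F/Y}$: this forces $F_X$ to be, up to $\QQ$-linear equivalence, a positive multiple of the ample generator of $\Cl(X)\otimes\QQ$, and the positivity of $\NNN_{F/Y}$ bounds from above the Fano index and the anticanonical degree $(-K_X)^{3}$. Comparing these numerical restrictions with the classification of rank-one $\QQ$-Fano threefolds — equivalently, recognising that $F$ is forced to be a del Pezzo surface cut out by a coordinate divisor $\{x_0=0\}$ — should leave precisely the four families of the statement: the smooth cubic in $\PP^4$, the quartic double solid in $\PP(1^4,2)$, the double Veronese cone in $\PP(1^3,2,3)$, and the sextic in $\PP(1^2,2^2,3)$. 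I expect this Fano-classification step, together with the rationality statement for the fibred cases above, to be the principal obstacle; an alternative that sidesteps the MMP on $Y$ is to use that, $F$ being big, the map defined by $|mF|$ for $m\gg 0$ is birational onto a threefold on which the image of $F$ is an ample Cartier divisor with the same rational geometry, and to classify such pairs directly — but the delicate points are identical.
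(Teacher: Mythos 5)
This theorem is quoted by the paper from \cite{CampanaFlenner} without proof, so your proposal can only be measured against the original argument and on its own merits. Your opening moves are sound: rational chain connectedness of $Y$ via free curves in $F$ and the normal bundle sequence, nefness and bigness of $F$ (positive on curves inside $F$ by ampleness of $\NNN_{F/Y}$, nonnegative on all others), and survival of the transform of $F$ under the MMP. One small repair: your reason that a big prime divisor cannot be contracted (``a big divisor meets every movable curve positively'') is off, because the curves sweeping out the exceptional divisor of a divisorial contraction are movable only inside that divisor, not on $X$; the correct argument is that an effective divisor which is exceptional for a birational contraction has $h^0(mE)=1$ for all $m$, contradicting bigness, which is preserved along flips and divisorial pushforwards.

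The genuine gap is your treatment of the fibered outputs $\dim Z=1,2$. You assert that the transform $F_X$, a rational multisection ``of controlled degree,'' yields a rational section and hence rationality of $X$. No such implication exists, and it fails in exactly the relevant situations: a smooth cubic threefold --- item \ref{thm:CaFl:3} of the very list you are trying to prove --- becomes, after projection from a line, a conic bundle over $\PP^2$ in which the transform of a smooth hyperplane section (a rational cubic surface, big and nef) is a $2$-section; this rational even-degree multisection produces no section, and the cubic is irrational by \cite{Clemens-Griffiths}. Only odd-degree multisections of conic bundles give sections, and nothing in your setup forces odd degree. The case $\dim Z=1$ is worse: del Pezzo fibrations of degree $1$ always possess a section yet are typically irrational, so even ``section $\Rightarrow$ rational'' breaks down there. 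This is precisely why the theorem's conclusion is ``rational \emph{or} birational to one of four Fano families,'' and why members of that list are themselves birational to conic bundles (the cubic as above, and the sextic in $\PP(1^2,2^2,3)$ by Theorem~\ref{thm:q=3o:main} of this paper, whose non-rationality is established through the Prym obstruction rather than excluded by a section argument). Handling the fibered outputs --- either proving rationality there using the specific positivity of $F$, or folding them back into the Fano case --- is the technical heart of Campana--Flenner's proof and cannot be dispatched as you propose. The Fano endgame you sketch (adjunction bounds on index and degree, then ``comparing with the classification,'' in effect the characterization recalled after Corollary~\ref{cor:X6} via \cite[Theorem~1.5]{CampanaFlenner} and \cite{Sano-1996}) is also left as a placeholder, but you flag it honestly; the fibered-case deduction, as written, is simply incorrect.
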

In the cases \ref{thm:CaFl:3}-\ref{thm:CaFl:1} the variety $X$ is a so-called 
\emph{del Pezzo threefold} (see e.g. \cite{Fujita:book}, \cite{KP:dP}).
It is known that such $X$ is not rational in the case \ref{thm:CaFl:3} as well 
as in the 
cases \ref{thm:CaFl:2} and \ref{thm:CaFl:1} under the assumption that $X$ is 
smooth
(see e.g. \cite{Clemens-Griffiths}, \cite{Voisin1988}, \cite{Grinenko:V1MFS}).
Rationality of \emph{singular} quartic double solids was studied in many papers (see \cite{Artin-Mumford-1972}, \cite{Beauville:Prym}, \cite{Debarre1990},
\cite{Prz-Ch-Shr:DS} and references therein). 
On the other hand, the birational geometry of varieties of type \ref{thm:CaFl:m}
is not studied well. 

Another motivation to consider sextic hypersurfaces in $\PP(1^2,2^2,3)$ is 
the study the birational geometry of
$\QQ$-Fano threefolds of large Fano index. Recall that a \emph{$\QQ$-Fano variety} is 
a projective variety $X$ with at worst terminal $\QQ$-factorial singularities 
such that the anticanonical class $-K_X$ is 
ample and the Picard rank $\uprho(X)$ equals $1$.
The \emph{$\QQ$-Fano index} of a $\QQ$-Fano variety is the 
maximal integer $q$ such that the relation 
$-K_X\qq q A$ holds for some integral Weil divisor $A$. 
We denote this number by $\qQ(X)$. According to \cite{Suzuki-2004} and 
\cite{P:2010:QFano} for a $\QQ$-Fano threefold $X$ we have
$\qQ(X)\in \{1,2,\dots,9,11,13,17,19\}$. Moreover, $X$ is rational if 
$\qQ(X)\ge 8$ \cite{P:2019:rat:Q-Fano}, \cite{P:QFano-rat1}.
It turns out that many $\QQ$-Fano threefolds of large $\QQ$-Fano index are
either rational or birationally equivalent to our hypersurface
$X_6\subset \PP(1^2,2^2,3)$ \cite{P:QFano-rat2}.

It is known that a \emph{very general} hypersurface
$X_6\subset \PP(1^2,2^2,3)$ not stably rational \cite{Okada2019}.
However, this strong result cannot be applied to a particular variety in the family. 
The goal of this note is to get rid of the generality condition and establish an explicit criteria 
for nonrationality.
Our main result is the following.
\begin{theorem}
\label{thm:main}
Let $X$ be a hypersurface of degree $6$ in the 
weighted projective space 
$\PP(1^2,2^2,3)$. Assume that the singularities of $X$ are terminal and $\QQ$-factorial.
\begin{enumerate}
\item \label{thm:main1}
If $X$ has a non-Gorenstein singularity which is not moderate \textup(see 
Definition~\xref{def:mode}\textup), then $X$ is rational.
\item \label{thm:main2}
If all non-Gorenstein singularities are moderate, any Gorenstein singularity of $X$ is a 
node or cusp, 
and the number of Gorenstein singularities is at most $4$, then $X$ is not rational.
\end{enumerate}
\end{theorem}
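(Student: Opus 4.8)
Write $X=\{F=0\}\subset\PP(1,1,2,2,3)$ with coordinates $x_0,x_1$ of weight $1$, $y_0,y_1$ of weight $2$ and $z$ of weight $3$, so that
\[
F=\alpha z^{2}+z\,\ell_{3}(x,y)+g_{6}(x,y),\qquad \alpha\in\CC,
\]
with $\ell_3,g_6$ weighted homogeneous of degrees $3,6$ in $x,y$. Recall that $\Sing\PP(1,1,2,2,3)$ is the line $L=\{x_0=x_1=0\}\cong\PP(1,1,3)$, carrying $\tfrac12(1,1,1)$ points, together with the point $P_z=(0{:}0{:}0{:}0{:}1)$ of type $\tfrac13(1,1,2)$, and that every non-Gorenstein point of $X$ lies on $L\cup\{P_z\}$. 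The plan is to prove \ref{thm:main1} by an explicit birational construction and \ref{thm:main2} by the Clemens--Griffiths criterion. First I would list the non-Gorenstein singularities of $X$, computing each from the leading terms of $\ell_3,g_6$ and of $F|_{x=0}=\alpha z^{2}+g_6(0,y)$, and compare with Definition~\xref{def:mode}; in particular I expect the case $P_z\in X$, i.e.\ $\alpha=0$, to be non-moderate.

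\emph{Part \ref{thm:main1}.} If the non-moderate point is $P_z$, then $\alpha=0$ and the projection $X\dashrightarrow Z:=\PP(1,1,2,2)$, $(x{:}y{:}z)\mapsto(x{:}y)$, is birational, the unique preimage of a point with $\ell_3\ne0$ being $z=-g_6(x,y)/\ell_3(x,y)$; since $Z$ is rational, so is $X$. If the non-moderate point $P$ lies on $L$, then $\alpha\ne0$ and, after completing the square, $X$ is the double cover $\tau\colon X\to Z=\PP(1,1,2,2)$ branched over the sextic surface $B=\{\ell_3^{2}-4\alpha\, g_6=0\}$; being non-moderate at $P$ should be equivalent to $B$ (equivalently the curve $X\cap L\subset L$, cut out by $\alpha z^{2}+g_6(0,y)$) acquiring a singularity of the appropriate kind at the fibre through $P$ of one of the obvious structure maps. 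I would then take the Kawamata weighted blow-up of $P$, or equivalently project from $P$, to produce a birational model that is a Mori fibre space over $\PP^1$ or $\PP^2$ which is visibly rational — a del Pezzo fibration with a rational section, or a conic bundle with reducible discriminant — and run the resulting Sarkisov link backwards. The bookkeeping is routine; the one genuine point is to check that the list of non-moderate types is \emph{exactly} the list for which such a rationality construction exists, since that is what makes \ref{thm:main1} and \ref{thm:main2} complementary.

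\emph{Part \ref{thm:main2}.} Now $\alpha\ne0$, so $X$ is the branched double cover $\tau\colon X\to Z=\PP(1,1,2,2)$ over the sextic $B=\{\ell_3^{2}-4\alpha\, g_6=0\}$, all non-Gorenstein points are moderate, and the Gorenstein points are at most four nodes or cusps. I would obstruct rationality via the Clemens--Griffiths criterion. Fix a resolution $\sigma\colon\widetilde X\to X$; since $X$ is a terminal Fano threefold it is rationally connected, so $\widetilde X$ is a smooth rationally connected threefold with $H^1(\widetilde X,\ZZ)=0$, and its intermediate Jacobian $J(\widetilde X)$ is a principally polarized abelian variety which, \emph{were $X$ rational}, would be a direct sum of Jacobians of smooth projective curves. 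Next I would compute $J(\widetilde X)$ from the double cover: $H^3(Z,\QQ)=0$, so $H^3(\widetilde X,\QQ)$ is entirely the anti-invariant part attached to $B$ — of dimension $h^{1,2}=8$ for a quasi-smooth member, by a Jacobian-ring count — and $\QQ$-factoriality together with the moderate/node/cusp hypotheses ensures that resolving the $\tfrac12(1,1,1)$ and $\tfrac13(1,1,2)$ points, the cusps, and the at most four nodes each removes a controlled, independent vanishing-cycle contribution. The upshot should be an identification of $J(\widetilde X)$ with an explicit ppav — a Prym-type variety, or the intermediate Jacobian of the del Pezzo fibration of degree $2$ over $\PP^1$ associated with the pencil $(x_0{:}x_1)$. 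One then shows that this ppav is \emph{not} a direct sum of Jacobians of curves, using the Abel--Jacobi map and the projective geometry of $B$; this is where the hypotheses ``the Gorenstein points are nodes or cusps'' and ``at most four of them'' enter, as precisely the thresholds below which the polarized Hodge structure does not degenerate to a split one.

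The difficulty is entirely in Part \ref{thm:main2}, and there in two linked places. First, one must control $H^3(\widetilde X)$ — with its Hodge filtration and polarization — \emph{uniformly} over the whole family, tracking how the anti-invariant Hodge structure of the branched double cover degenerates as $B$ (equivalently $X\cap L$) acquires the permitted singularities and as $X$ acquires its nodes and cusps; the moderate/node/cusp/``$\le4$'' conditions have to be shown to be exactly the bounds past which a vanishing cycle splits off a polarized summand, which is also what links this part back to Part \ref{thm:main1}. Second, one must prove the resulting principally polarized abelian variety is not a direct sum of Jacobians of curves; the most robust route I see is to exhibit inside $J(\widetilde X)$ the — necessarily indecomposable — intermediate Jacobian of a fixed non-rational threefold (e.g.\ a quartic double solid, or a del Pezzo threefold of small degree) as a non-split polarized direct summand, so that a Clemens--Griffiths decomposition becomes impossible. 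Everything else — the projection and the weighted blow-ups in Part \ref{thm:main1}, and the double-cover reduction — is structural and should go through with careful but routine local computation.
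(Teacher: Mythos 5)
Your plan for part \ref{thm:main2} does not contain the construction on which the proof actually has to run, and the two substitutes you offer do not work as stated. The paper's argument is: blow up a moderate index-$2$ point (Lemma~\xref{lemma:moderate-blowup}) and complete this to a Sarkisov link $X\leftarrow\tilde X\to\PP(1^2,2)$, where $\varphi\colon\tilde X\to\PP(1^2,2)$ is an extremal $\QQ$-conic bundle with $\Delta_\varphi\in|-2K_{\PP(1^2,2)}|$ (Theorem~\xref{thm:q=3o:main}); then further links over the bad fibres --- the non-Gorenstein fibre of type $(\mathrm{T}_2)$ or $(\mathrm{ID}_1^\vee)$ and the $l\le 4$ node/cusp fibres --- produce a \emph{standard} conic bundle $\tilde\pi\colon\tilde Y\to\tilde S$ over a weak del Pezzo surface of degree $8-l\ge 4$ with $\Delta_{\tilde\pi}\sim-2K_{\tilde S}$ (Theorem~\xref{thm:SL-CB}); finally the Prym obstruction (Theorem~\xref{thm:J-Pr}) combined with Shokurov's criterion (Theorem~\xref{thm:sho}) applies because a nodal member of $|-2K|$ on an anticanonical del Pezzo of degree $\ge 4$ is canonically embedded as an intersection of quadrics, hence neither hyperelliptic, trigonal, quasi-trigonal, nor a plane quintic (Lemma~\xref{lemma:DP-curves-2}), with the condition (S) handled by Lemma~\xref{lemma:surf}. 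This is precisely where ``nodes or cusps'' and ``at most $4$'' enter: they guarantee the degree bound $8-l\ge4$ and the nodality of the pushed-down discriminant. Your sketch replaces this by (a) a uniform control of the anti-invariant Hodge structure of the double cover $X\to\PP(1^2,2^2)$ as the branch divisor degenerates --- exactly the kind of family/degeneration argument that only yields very-general statements and that the theorem is designed to avoid, and which you do not carry out; and (b) proving indecomposability of $J(\widetilde X)$ by exhibiting the intermediate Jacobian of some \emph{fixed} non-rational threefold as a polarized direct summand --- you give no construction of such a summand and there is no reason one should exist for every member of the family, so this is a hope rather than an argument. The assertion that the hypotheses are ``precisely the thresholds below which the Hodge structure does not split'' is the statement to be proved, not a proof.

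For part \ref{thm:main1} two corrections. First, the index-$3$ point cannot lie on a terminal $X$: the coefficient of $x_3^2$ is necessarily nonzero (otherwise the point is a non-terminal quotient of a hypersurface singularity), so your ``$\alpha=0$'' case is vacuous rather than an instance of non-moderateness. Second, the actual content of \ref{thm:main1}, which your ``routine bookkeeping'' conceals, is the normal-form analysis of Proposition~\xref{setup:q=3}: a non-moderate non-Gorenstein point means either a type worse than \type{cA/2}, in which case the equation is of degree $1$ in $y_2$, or a \type{cA/2} point with $\rk\phi_2''=1$, in which case after completing the square and a linear change the equation again becomes of degree $1$ in one variable; in both cases a projection is birational onto a rational weighted projective space (Corollary~\xref{cor:n-mod-rat}). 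No Sarkisov link, Mori fibre space, or ``exact complementarity of the two lists'' is needed --- part \ref{thm:main1} is one-directional. This half of your plan could be completed along your lines, but as written the decisive local computation is missing, and part \ref{thm:main2} is where the proposal genuinely fails.
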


Note that a general hypersurface 
$X_6\subset \PP(1^2,2^2,3)$ satisfies the conditions of the lemma:
\begin{corollary}
Let $X$ be a quasi-smooth hypersurface of degree $6$ in the
$\PP(1^2,2^2,3)$. Then $X$ is not rational.
\end{corollary}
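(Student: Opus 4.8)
The corollary will be deduced from part~\ref{thm:main2} of Theorem~\ref{thm:main}: the plan is to show that a quasi-smooth hypersurface $X=X_6\subset\PP(1^2,2^2,3)$ has terminal $\QQ$-factorial singularities, that every non-Gorenstein singularity of $X$ is moderate, and that $X$ has no Gorenstein singularity at all (so that the two conditions on Gorenstein points in~\ref{thm:main2} are vacuous). Apart from an explicit analysis of the singular locus, the only external input is that the cyclic quotient singularity $\frac{1}{2}(1,1,1)$ is moderate in the sense of Definition~\ref{def:mode}.

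The heart of the argument is the description of $\Sing X$. Write the coordinates on $\PP(1^2,2^2,3)$ as $x_0,x_1$ (weight $1$), $y_0,y_1$ (weight $2$), $z$ (weight $3$), let $f$ be the defining equation of degree $6$, and let $C_X\subset\mathbb{A}^5$ be its affine cone. One has $\Sing\PP(1^2,2^2,3)=\{P_z\}\cup\Pi$, where $P_z=(0{:}0{:}0{:}0{:}1)$ is a $\frac{1}{3}(1,1,2)$-point and $\Pi=\{x_0=x_1=z=0\}\cong\PP^1$ is a line whose general point is a transverse $\frac{1}{2}(1,1,1)$-singularity. An inspection of which degree-$6$ monomials can occur shows that all partial derivatives of $f$ vanish at the point of $C_X$ over $P_z$ as soon as $f$ contains no $z^2$-term; hence quasi-smoothness forces $P_z\notin X$. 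Likewise, at the point of $C_X$ over a point $q\in X\cap\Pi$ all of $\partial f/\partial x_0,\partial f/\partial x_1,\partial f/\partial z$ vanish for parity reasons (the weights $1$ and $3$ are odd, the weight $2$ is even), so the Jacobian there reduces to the gradient of the restricted cubic form $g(y_0,y_1):=f(0,0,y_0,y_1,0)$; thus quasi-smoothness fails at any multiple root of $g$. Consequently $X\not\supset\Pi$ and $X\cap\Pi$ is a set of three distinct reduced points, and near each of them a non-vanishing derivative $\partial g/\partial y_i$ lets one eliminate one coordinate and identify $X$ analytically with $\frac{1}{2}(1,1,1)$. Finally $X$ is smooth at every point of $\PP\setminus\Sing\PP$ by definition of quasi-smoothness. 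Hence $X$ has exactly three singular points, all of type $\frac{1}{2}(1,1,1)$, and is smooth elsewhere.

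Granting this, the rest is short. The singularity $\frac{1}{2}(1,1,1)$ is terminal, so $X$ has terminal singularities; and since the three singular points are isolated quotient singularities with local divisor class group $\ZZ/2$, the double $2D$ of every Weil divisor $D$ on $X$ is Cartier, so $X$ is $\QQ$-factorial. (For quasi-smooth well-formed hypersurfaces one also gets $\Cl(X)=\ZZ$ and $\uprho(X)=1$, although this is not needed.) All three singularities are non-Gorenstein of index $2$ and moderate by Definition~\ref{def:mode}, whereas $X$ has no Gorenstein singular point; therefore the hypotheses of part~\ref{thm:main2} of Theorem~\ref{thm:main} are satisfied, and $X$ is not rational.

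The one genuine obstacle is the bookkeeping in the second paragraph: the single hypothesis of quasi-smoothness must be made to rule $P_z$ out of $X$, to forbid $X\supset\Pi$, and to keep the three quotient points from degenerating to anything worse than $\frac{1}{2}(1,1,1)$, all at once. This is a routine Jacobian computation on $C_X$ once the weights $(1,1,2,2,3)$ and the admissible degree-$6$ monomials are written down, and Theorem~\ref{thm:main} supplies everything else.
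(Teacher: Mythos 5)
Your proposal is correct and follows essentially the route the paper intends (the paper leaves the corollary as an immediate consequence of Theorem~\ref{thm:main}\ref{thm:main2} together with the singularity analysis implicit in Proposition~\ref{setup:q=3}): quasi-smoothness forces the singular locus to be exactly three points of type $\frac12(1,1,1)$, which are terminal, $\QQ$-factorial and moderate in the sense of Definition~\ref{def:mode}, with no Gorenstein singular points, so Theorem~\ref{thm:main}\ref{thm:main2} applies. Your explicit Jacobian verification on the affine cone is just a spelled-out version of this same argument.
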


\subsection*{Acknowledgements}
The author would like to thank Alexander Kuznetsov and Ivan Cheltsov for helpful discussions.

\section{Preliminaries}
We work over the complex number field $\CC$.
\subsection{Notation} We employ the following standard notation.

\begin{itemize}
\item
$\PP(w_1,\dots,w_n)$ is the weighted projective space,
\item
$\FF_n$ is the Hirzebruch surface,
\item
$\mumu_n$ is the cyclic group of order $n$,
\item
$\Cl(X)$ is the Weil divisor class group of a normal variety $X$.
\end{itemize}

\subsection{Singularities}
For the classification of terminal threefold singularities we refer to 
\cite{Mori:term-sing} or \cite{Reid:YPG}.
Recall that any threefold terminal singularity $X\ni P$ of type \type{cA/r} is 
analytically isomorphic to the quotient
\begin{equation}
\label{eq:cA/r}
\{ xy+\phi(z^r,t)=0\} /\mumu_r(a,-a,1,0),\quad \gcd(r,a)=1.
\end{equation} 
The number $\aw(X,P):= \ord_0 \phi(0,t)$ is called the \emph{axial weight} of 
$X\ni P$. It coincides with the number of cyclic quotient singularities in a 
$\QQ$-smoothing 
\cite[\S~6.4]{Reid:YPG}.

\begin{definition}[\cite{Kawamata:Moderate}]
\label{def:mode}
We say that the singularity \eqref{eq:cA/r} is \emph{moderate} 
if the term $z^r$ appears in $\phi$.
Then by an analytic coordinate change the equation \eqref{eq:cA/r} can be rewritten 
as follows
\begin{equation}
\label{eq:cA/r:m}
\{ xy+z^r+t^m=0\} /\mumu_r(a,-a,1,0),\quad \gcd(r,a)=1,\quad m =\aw(X,P).
\end{equation} 
Here the case $m=1$ is not excluded. Then $X\ni P$ is a so-called \emph{terminal cyclic quotient 
singularity}, 
traditionally it is said to be of type $\frac 1r (a,-a,1)$ (see \cite{Reid:YPG}).
\end{definition}

Recall that an \emph{extremal blowup} of a threefold $X$ with terminal 
$\QQ$-factorial singularities
is a birational morphism $f: \tilde{X} \to X$ 
such that 
$\tilde{X}$ also has only terminal $\QQ$-factorial singularities and 
$\uprho(\tilde{X}/X)=1$.
In this situation the anticanonical divisor $-K_{\tilde X}$ must be $f$-ample. 

\begin{lemma}[see e.g. \cite{Kawamata-1992-e-app}]
\label{lemma:moderate-blowup}
Let $X\ni P$ be a moderate singularity given by \eqref{eq:cA/r:m} with 
$m>1$ and let $f: \tilde X\to X$ be the 
weighted blowup with weights 
\[\textstyle
\frac 1r(a,r-a,1,r). 
\]
Then $f$ is an extremal 
blowup, the exceptional divisor $E\subset \tilde X$ is Cartier, it
is isomorphic to the hypersurface 
\[
\{xy+z^r=0\}\subset \PP(a,r-a,1,r), 
\]
the 
discrepancy of $E$
equals $1/r$, and $\tilde X$ has \textup(at most\textup) three singular points 
which are cyclic quotients of types
\[\textstyle
\frac 1a(-r,r,1),\qquad \frac 1{r-a}(r,-r,1), 
\]
and a moderate singularity 
\[
\{ xy+z^r+t^{m-1}=0\} /\mumu_r(a,-a,1,0).
\]
\end{lemma}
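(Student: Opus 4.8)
The plan is to make the weighted blowup $f\colon\tilde X\to X$ completely explicit in affine charts and read off all the stated properties by direct toric-type computation, using the fact that the ambient singularity $\{xy+z^r+t^m=0\}/\mumu_r(a,r-a,1,0)$ embeds equivariantly in $\mathbb A^4/\mumu_r$. First I would set up the $\mumu_r$-weighted blowup of $\mathbb A^4$ with weights $\tfrac1r(a,r-a,1,r)$: cover the exceptional $\PP(a,r-a,1,r)$ by the four standard affine charts, and in each chart write down the birational map and the strict transform of the hypersurface $H=\{xy+z^r+t^m=0\}$. The crucial observation is that all three monomials $xy$, $z^r$, $t^m$ have $f$-order $\ge 2$ under the chosen weights — indeed $xy$ has weight $a+(r-a)=r$, $z^r$ has weight $r$, and $t^m$ has weight $rm$ — so after dividing by the $r$-th power of the chart coordinate the strict transform of $H$ becomes $\{xy+z^r+(\text{chart variable})^{r(m-1)}t^m\cdots\}$, which on the exceptional locus degenerates to $\{xy+z^r=0\}$. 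This simultaneously identifies $E$ with $\{xy+z^r=0\}\subset\PP(a,r-a,1,r)$ and, by the standard discrepancy formula for weighted blowups (the discrepancy of a weight-$\tfrac1r(w_1,w_2,w_3,w_4)$ blowup of the quotient is $\tfrac1r(\sum w_i)-1$ when the centre is the hypersurface, adjusted by the hypersurface weight $r$), gives discrepancy $\tfrac1r(a+(r-a)+1+r)-1-1=\tfrac1r$.

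Next I would analyze the singularities of $\tilde X$ chart by chart. Away from the three torus-fixed strata the total space is smooth (the strict transform of $H$ is a smooth hypersurface there, being defined by $xy+z^r=\text{unit}$ with a nonvanishing differential). In the $x$-chart the $\mumu_r$-action together with the chart relabelling produces a cyclic quotient singularity; eliminating one variable via $xy+z^r=0$ (so $y=-z^r/x$) reduces the local ring to a three-dimensional cyclic quotient, and tracking the weights shows it is $\tfrac1a(-r,r,1)$. Symmetrically the $y$-chart gives $\tfrac1{r-a}(r,-r,1)$. In the $t$-chart the exceptional coordinate $t$ survives in the equation as $t^{r(m-1)}$ times lower-order terms, and after the appropriate division the strict transform is $\{xy+z^r+t^{m-1}=0\}/\mumu_r(a,-a,1,0)$, i.e.\ a moderate singularity with axial weight $m-1$ (or a smooth point / cyclic quotient when $m-1=1$, consistent with Definition 2.5). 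I would also check that $E$ is Cartier near these points — this follows because $E$ is cut out by a single chart coordinate that generates the local class group contribution, or more cleanly because $-K_{\tilde X}$ and $K_{\tilde X}+E=f^*K_X+(\tfrac1r-1)E+E$ pull back correctly; the key point is that $\tfrac1r\cdot(\text{something})$ works out to make $E$ locally principal.

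Finally, $\tilde X$ is $\mathbb Q$-factorial with terminal singularities (the listed cyclic quotients $\tfrac1a(-r,r,1)$, $\tfrac1{r-a}(r,-r,1)$ are terminal since $\gcd(r,a)=1$ forces the residues to satisfy the Reid–Tai/terminal lens-space condition, and the residual $\{xy+z^r+t^{m-1}=0\}/\mumu_r$ is terminal of type $\mathrm{cA}/r$ by inspection), and $\uprho(\tilde X/X)=1$ since we have contracted a single prime divisor; together with $-K_{\tilde X}$ being $f$-ample (which holds because the discrepancy is positive and $E$ is the unique $f$-exceptional divisor), this makes $f$ an extremal blowup in the sense defined in the excerpt. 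The main obstacle I anticipate is purely bookkeeping: correctly normalizing the $\mumu_r$-weights in each affine chart after the toric coordinate change so that the three quotient types come out exactly as stated (the signs $-r$ versus $r$ in $\tfrac1a(-r,r,1)$ are easy to flip), and confirming the Cartier property of $E$ at the two ``sharp'' quotient points rather than just away from them. This is standard for weighted blowups of compound Du Val singularities but requires care; I would cite \cite{Kawamata-1992-e-app} for the general mechanism and do the weight arithmetic explicitly for this specific family.
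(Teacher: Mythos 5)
The paper offers no proof of this lemma at all --- it is quoted from Kawamata's appendix \cite{Kawamata-1992-e-app} --- so your explicit chart-by-chart computation of the weighted blowup is the natural route, and most of it is sound: the integer weights of $xy$, $z^r$, $t^m$ are $r$, $r$, $rm$, so for $m>1$ the strict transform meets the exceptional $\PP(a,r-a,1,r)$ exactly in $\{xy+z^r=0\}$; the adjunction bookkeeping does give discrepancy $\frac{2r+1}{r}-1-1=\frac1r$; and the three relevant charts produce exactly the advertised points. For instance, in the $x$-chart the strict transform is $\{y_1+z_1^r+x_1^{m-1}t_1^m=0\}\subset\mathbb{A}^4/\mumu_a(r,-r,-1,-r)$, so one eliminates $y_1$ via this equation (not via $y=-z^r/x$) and gets the quotient $\frac1a(r,-1,-r)$, which is of type $\frac1a(-r,r,1)$; symmetrically for the $y$-chart; the $t$-chart gives $\{x_1y_1+z_1^r+t_1^{m-1}=0\}/\mumu_r(a,-a,1,0)$; the $z$-chart is smooth. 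Extremality then follows, but note that ``we have contracted a single prime divisor'' is not by itself a proof of $\QQ$-factoriality: you need that $E$ is $\QQ$-Cartier at each of the three special points (Cartier at the moderate point, automatic at the two quotient points), which your charts do provide.

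The genuine gap is the Cartier clause, and your proposed justification is in fact backwards: if the local equation of $E$ is a chart coordinate whose weight generates the local class group, that is precisely the situation in which $E$ is \emph{not} Cartier. Concretely, at the $\frac1a(-r,r,1)$-point $E$ is the image of $\{x_1=0\}$ in $\mathbb{A}^3_{x_1,z_1,t_1}/\mumu_a(r,-1,-r)$ with $r\not\equiv 0\pmod{a}$, so it is not Cartier there whenever $a>1$ (likewise at the $\frac1{r-a}$-point when $r-a>1$). This is also forced by Corollary~\ref{cor:moderate-blowup}: a Cartier $E$ would make $E^3=r^2/a(r-a)$ an integer, which already fails for $r=3$. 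What is true, and what your $t$-chart computation actually shows, is that $E$ is Cartier at the moderate point (it is cut out there by the invariant coordinate $t_1$), hence Cartier everywhere exactly when $a=r-a=1$, i.e.\ $r=2$ --- the only case used in the paper (Corollary~\ref{cor:moderate-blowup1}). So this clause cannot be established in the stated generality; your write-up should restrict it (or prove it only at the moderate point), and you should also fix the normalization slips (``dividing by the $r$-th power of the chart coordinate'', the exponent $t^{r(m-1)}$ versus $t^{m-1}$) so that one consistent chart convention is used throughout.
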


\begin{corollary}[see e.g. {\cite[Ch.~10]{KM:92}}]
\label{cor:moderate-blowup}
In the above notation we have $E^3=r^2/ a(r-a)$.
\end{corollary}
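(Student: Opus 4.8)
The plan is to combine the adjunction formula for $E$ inside $\tilde X$ (using the discrepancy computed in Lemma~\ref{lemma:moderate-blowup}) with the adjunction formula for $E$ inside the weighted projective space $\PP := \PP(a, r-a, 1, r)$, and then to reduce everything to the elementary identity $\OOO_\PP(1)^3 = \frac{1}{a(r-a)\,r}$ on weighted projective $3$-space. Throughout I interpret $E^3$ as the self-intersection $(E|_E)^2$ of the $\QQ$-Cartier restriction on the proper surface $E$.

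First I would record that, since $X \ni P$ is terminal, $K_X$ is $\QQ$-Cartier, and by Lemma~\ref{lemma:moderate-blowup} we have $K_{\tilde X} = f^* K_X + \frac1r E$. All singularities of $\tilde X$ and of $\PP$ are isolated cyclic quotient singularities, so both varieties are $\QQ$-factorial and $\QQ$-Gorenstein, and $E \cong \{xy + z^r = 0\}$ is a normal (indeed Gorenstein) surface. Moreover the loci where adjunction could contribute a correction term --- the finitely many quotient points of $\tilde X$ and of $\PP$ lying on $E$ --- have codimension $\ge 2$ on the surface $E$, hence contribute no divisor to the relevant different. Thus adjunction holds on the nose: $K_E = (K_{\tilde X} + E)|_E$ and $K_E = (K_\PP + E)|_E$.

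From the first formula and $K_{\tilde X}+E = f^*K_X + \frac{r+1}{r}E$ I obtain $K_E = (f^*K_X)|_E + \frac{r+1}{r}\,E|_E$. Since $E$ is contracted by $f$ to the point $P$, the projection formula gives $(f^*K_X)|_E \cdot C = K_X \cdot f_*[C] = 0$ for every curve $C \subset E$, so $(f^*K_X)|_E$ is numerically trivial on $E$ and therefore $E^3 = (E|_E)^2 = \frac{r^2}{(r+1)^2}\,K_E^2$. Next I compute $K_E^2$ using the embedding $E \subset \PP$: here $K_\PP = \OOO_\PP(-(a + (r-a) + 1 + r)) = \OOO_\PP(-(2r+1))$, and $E \in |\OOO_\PP(r)|$ since $\deg(xy) = \deg(z^r) = r$, so the second adjunction formula yields $K_E = \OOO_\PP(-(r+1))|_E$, whence $K_E^2 = (r+1)^2\,\OOO_\PP(1)^2\cdot E = (r+1)^2\, r\, \OOO_\PP(1)^3$. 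Plugging in $\OOO_\PP(1)^3 = \frac{1}{a(r-a)\cdot 1\cdot r}$ gives $K_E^2 = \frac{(r+1)^2}{a(r-a)}$, and therefore $E^3 = \frac{r^2}{(r+1)^2}\cdot\frac{(r+1)^2}{a(r-a)} = \frac{r^2}{a(r-a)}$, as claimed. The same value can be obtained directly by a chart-by-chart computation on the weighted blowup, which is the route of the cited references (see e.g. \cite[Ch.~10]{KM:92}).

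The main point requiring care is the justification that both adjunction formulas hold without correction, i.e. that the differents vanish; this is exactly where one uses that all the singularities involved are isolated cyclic quotient singularities (hence of codimension $\ge 2$ on the surface $E$) together with the normality of $E$. Everything else is bookkeeping with weighted degrees and the standard intersection theory on weighted projective space.
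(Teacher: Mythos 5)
Your computation is correct, and it is genuinely different from what the paper does: the paper offers no argument at all for this corollary, simply pointing to the chart-by-chart computation of the weighted blowup in \cite[Ch.~10]{KM:92}, whereas you give a self-contained derivation by playing the two adjunctions against each other. Your key steps all check out: $K_{\tilde X}=f^*K_X+\frac1r E$ from Lemma~\ref{lemma:moderate-blowup} plus numerical triviality of $(f^*K_X)|_E$ gives $K_E\equiv\frac{r+1}{r}E|_E$, hence $E^3=\frac{r^2}{(r+1)^2}K_E^2$; and on the ambient side $K_E=\OOO_\PP(r-(2r+1))|_E$ with $\OOO_\PP(1)^2\cdot E=r\,\OOO_\PP(1)^3=\frac{1}{a(r-a)}$ yields $K_E^2=\frac{(r+1)^2}{a(r-a)}$. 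Your justification that both differents vanish is also sound, but it is worth making the reason explicit: since $\gcd(a,r)=1$, all pairwise gcds of the weights $a,\ r-a,\ 1,\ r$ equal $1$, so $\Sing\PP(a,r-a,1,r)$ consists of at most the three vertex points; likewise $\tilde X$ has only isolated (terminal quotient) points and $E$ is Cartier there by the lemma, so along every prime divisor of the normal surface $E$ the ambient space is smooth and $E$ is Cartier, forcing the different to be zero. One small inaccuracy: $E$ is normal and $\QQ$-Gorenstein but in general \emph{not} Gorenstein (it acquires cyclic quotient points of indices $a$ and $r-a$ at the vertices); this parenthetical claim is harmless since your argument only uses normality and $\QQ$-Cartierness of $K_E$. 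What your route buys is independence from the toric/chart bookkeeping of \cite{KM:92}; what the cited computation buys is that it simultaneously produces the local description of $\tilde X$ and the discrepancy, which your argument takes as input from Lemma~\ref{lemma:moderate-blowup}.
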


\begin{corollary}
\label{cor:moderate-blowup1}
If in the above notation $r=2$, then the variety $\tilde X$ has a unique 
singularity, 
the divisor $E$ is Cartier, and $E\simeq \PP(1,1,4)$. 
\end{corollary}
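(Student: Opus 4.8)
The plan is to deduce the statement from Lemma~\ref{lemma:moderate-blowup} by setting $r = 2$, the only point requiring a genuine argument being the identification of the exceptional divisor. Since $\gcd(r,a) = 1$, taking $r = 2$ forces $a = 1$, hence $r - a = 1$ as well. Feeding this into Lemma~\ref{lemma:moderate-blowup}, two of the three potential singular points of $\tilde X$ — the cyclic quotients of types $\frac1a(-r,r,1)$ and $\frac1{r-a}(r,-r,1)$ — become quotients by the trivial group $\mumu_1$, that is, smooth points; only the moderate singularity $\{xy + z^2 + t^{m-1} = 0\}/\mumu_2(1,1,1,0)$ survives (note $-a \equiv 1 \pmod 2$), and it is a single point in any case: the cyclic quotient $\frac12(1,1,1) = \frac12(1,-1,1)$ when $m = 2$ (eliminate $t$), and a $\mathrm{cA}/2$ point when $m \geq 3$. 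Hence $\tilde X$ has exactly one singular point. That $E$ is Cartier is already part of Lemma~\ref{lemma:moderate-blowup}.

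It then remains to show that $E \simeq \{xy + z^2 = 0\} \subset \PP(1,1,1,2)$ — with $x, y, z$ of weight $1$ and the last coordinate $t$ of weight $2$ — is isomorphic to $\PP(1,1,4)$. As the defining equation does not involve $t$, the surface $E$ is the cone, with vertex the coordinate point of weight $2$, over the conic $C = \{xy + z^2 = 0\} \subset \PP^2$, and $C$ is a rational normal curve of degree $2$; so one should expect to land in a weighted projective plane. I would make the identification precise at the level of graded rings: after the coordinate change $z \mapsto \sqrt{-1}\,z$ we may take the equation to be $xy - z^2 = 0$, and then $x \mapsto a^2$, $y \mapsto b^2$, $z \mapsto ab$, $t \mapsto t$ identifies $k[x,y,z,t]/(xy - z^2)$ with the subring $k[a^2, b^2, ab, t] \subset k[a,b,t]$. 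Grading $k[a,b,t]$ by $(1,1,4)$, this subring is exactly the second Veronese subring $k[a,b,t]^{(2)}$, so by the standard Veronese identification for (weighted) projective spaces, $E = \operatorname{Proj} k[a,b,t]^{(2)} \cong \operatorname{Proj} k[a,b,t] = \PP(1,1,4)$. Alternatively, one may exhibit the morphism $\varphi\colon \PP(1,1,4) \to E$, $[a:b:t] \mapsto [a^2 : b^2 : ab : t]$, check by an elementary chart computation that it is bijective, note that it is therefore finite (being a quasi-finite morphism of projective varieties), and conclude by Zariski's main theorem, $E$ being normal as a hypersurface regular in codimension one.

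So the corollary amounts to bookkeeping on top of Lemma~\ref{lemma:moderate-blowup}. The one genuinely substantive step is the identification $E \cong \PP(1,1,4)$ in the second paragraph, which I regard as the main (and ultimately minor) obstacle; it becomes routine once $E$ is recognized as the weighted cone over the degree-$2$ rational normal curve $C \cong \PP^1$.
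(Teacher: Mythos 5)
Your proposal is correct and follows exactly the route the paper intends: the corollary is stated there without proof as an immediate consequence of Lemma~\ref{lemma:moderate-blowup}, obtained by setting $r=2$, $a=1$ so that the two cyclic quotient points become smooth and only the moderate point (a $\frac12(1,1,1)$ point if $m=2$, a \type{cA/2} point if $m\ge 3$) survives. Your explicit Veronese identification of $\{xy+z^2=0\}\subset\PP(1,1,1,2)$ with $\PP(1,1,4)$ is the standard fact the paper leaves implicit, and it is carried out correctly.
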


\begin{definition}
A threefold singularity is said to be \emph{of type~\typeA{n}} if it
is analytically isomorphic to a hypersurface singularity given by the equation
\begin{equation*}
y_1^2+y_2^2+y_3^2+y_4^{n+1}=0.
\end{equation*} 
In particular, a singularity of type~\typeA{1} is a \emph{node} \textup(an ordinary double point\textup)
and a singularity of type~\typeA{2} is a \textup(generalized\textup) \textit{cusp}.
\end{definition}

\begin{remark}
\label{rem:cAn}
Let $X\ni P$ be a singularity of type \typeA{n}, let $f: \tilde X\to X$ be the blowup of the 
maximal ideal $\mathfrak{m}_{P,X}$, and let $E\subset \tilde X$ be the exceptional divisor. 
Then $f$ is an extremal blowup. Moreover, $E\simeq \PP(1^2, 2)$ if $n\ge 2$ and $E\simeq \PP^1\times \PP^1$ if $n=1$.
The variety $\tilde X$ has a unique singular point which is of type \typeA{n-2} if $n\ge 3$
and $\tilde X$ is smooth if $n=1$ or $2$.
\end{remark}

\section{Conic bundles}

\begin{definition}
A \emph{standard conic bundle} is a contraction $\pi:Y\to S$
from a smooth threefold to a surface such that all fibers are 
one-dimensional, $\uprho(Y/S)=1$, and $-K_Y$ is $\pi$-ample.
The \emph{discriminant divisor} of $\pi$ is the 
curve 
\[
\Delta_\pi:=\{s\in S \mid \text{ $\pi$ is not smooth over $s$}\}.
\]
\end{definition}

If $\pi:Y\to S$ is a standard conic bundle, then the base $S$ is smooth \cite{Mori:3-folds}. Moreover, 
$Y$ admits an embedding into $\PP^2$-bundle $\PP_{S}(\EEE)$ over $S$ so that the fibers of $\pi$ are plane 
conics, where $\EEE:=\pi_*\OOO_Y(-K_Y)$ is a rank-$3$ vector bundle (see e.g. \cite[Sect.~1]{Sarkisov:82e}).
The discriminant divisor $\Delta_\pi$ is a normal crossing curve (see \cite[Sect.~1]{Sarkisov:82e} or~\cite[Sect.~3]{P:rat-cb:e}).

Let $\hat \Delta_{\pi}$ 
is the scheme that parametrizes the components 
of degenerate fibers over $\Delta_{\pi}$. In other words, $\hat \Delta_{\pi}$ is the subscheme of the relative Grassmannian 
of lines in $\PP_S(\EEE)$ 
whose points correspond to lines lying in the fibers of $\pi$. Then the projection 
$\tau:\hat \Delta_{\pi}\to \Delta_{\pi}$ is a finite morphism of degree $2$.
We call this morphism the \emph{double cover associated to} $\pi$. 
If otherwise is not stated, the base surface $S$ of a conic bundle $\pi:Y\to S$ will be supposed to be projective
surface.

\begin{lemma}
\label{prop:cb}
Let $\pi: Y\to S$ be a standard conic bundle.
Suppose that a component $\Delta_1\subset \Delta_{\pi}$ of the discriminant curve is smooth. 
Then the number $\Delta_1\cdot (\Delta_{\pi}-\Delta_1)$ is even. 
If furthermore $\Delta_1\subset \Delta_{\pi}$ is rational, then 
$\Delta_1\cdot (\Delta_{\pi}-\Delta_1)\ge 2$.
\end{lemma}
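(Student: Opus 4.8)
The plan is to analyze the double cover $\tau: \hat\Delta_\pi \to \Delta_\pi$ associated to the conic bundle, restricted over the component $\Delta_1$. Write $\Delta' := \Delta_\pi - \Delta_1$ for the union of the remaining components. The key local fact, which follows from the normal-crossing structure of $\Delta_\pi$ and the description of degenerate conics, is that over a generic point of $\Delta_1$ the fiber of $\pi$ is a line-pair (two distinct lines), so $\tau$ is étale there; over a point of $\Delta_1 \cap \Delta'$ the fiber is again a line-pair but now the two branch curves $\Delta_1$ and $\Delta'$ meet transversally, and over the finitely many points where $\pi$ has a double-line fiber the cover $\tau$ is ramified. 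The preimage $\tau^{-1}(\Delta_1)$ is a (possibly reducible) curve $\hat\Delta_1$ mapping $2:1$ to $\Delta_1$, and the ramification of this restricted cover occurs precisely at: (i) the points of $\Delta_1 \cap \Delta'$, and (ii) the points of $\Delta_1$ lying over double-line fibers.

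First I would make the local picture precise: near a point $s_0 \in \Delta_1 \cap \Delta'$, choosing local analytic coordinates $(u,v)$ on $S$ with $\Delta_1 = \{u=0\}$ and $\Delta' = \{v=0\}$, the conic in $\PP^2$ can be put (after the standard normalization of the Sarkisov/Mori model) in the form $x_1 x_2 = u v \cdot x_3^2$ or $x_1 x_2 - (\text{unit})\, x_3^2 = 0$ degenerating appropriately, so that the two lines through $s_0$ in the fibral surface come together as $v \to 0$ along $\Delta_1$; this forces a simple ramification point of $\hat\Delta_1 \to \Delta_1$ at each intersection point $\Delta_1 \cap \Delta'$. Thus $\hat\Delta_1 \to \Delta_1$ is a double cover branched over a divisor whose degree is $\Delta_1\cdot\Delta' + (\text{number of double-line points on }\Delta_1)$. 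Since a double cover of a smooth curve is branched over an \emph{even} number of points (the branch divisor must be $2$-divisible in $\Pic$, or equivalently the Riemann–Hurwitz parity), the total $\Delta_1\cdot\Delta' + (\#\text{double-line points on }\Delta_1)$ is even, and I would then need a separate parity statement for the number of double-line points on $\Delta_1$.

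For that second parity, I would use that along $\Delta_1$, away from $\Delta'$, the conic bundle restricted to a small neighborhood is itself (after base change or directly) controlled by the discriminant section: the double-line points are the ramification points of the cover $\hat\Delta_1 \to \Delta_1$ coming purely from $\Delta_1$ being a branch component of $\tau$, and locally transverse self-intersection behavior of $\Delta_\pi$ forces these to also come in a pattern making their count have fixed parity modulo the already-accounted contribution. Concretely, the cleanest route is: $\tau: \hat\Delta_\pi \to \Delta_\pi$ is flat of degree $2$, and restricting the line bundle $\LLL$ with $\LLL^{\otimes 2} \cong \OOO_{\Delta_\pi}(\text{branch})$ to $\Delta_1$ shows $\deg(\LLL|_{\Delta_1})$ is an integer; its double is the total branch degree on $\Delta_1$, which equals $\Delta_1\cdot\Delta'$ plus double-line points. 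Hence $\Delta_1\cdot\Delta'$ is even once one checks the double-line contribution on $\Delta_1$ is even — and this is exactly the statement that $\hat\Delta_1$ restricted over $\Delta_1 \setminus \Delta'$ is an étale double cover possibly-twisted, whose monodromy is trivial. I expect \textbf{this last parity bookkeeping} — disentangling the double-line ramification from the $\Delta_1\cap\Delta'$ ramification — to be the main obstacle; the standard reference trick is to pass to the normalization of $\hat\Delta_\pi$ and use that the genus of $\hat\Delta_1$ computed via Riemann–Hurwitz must be a nonnegative integer.

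Finally, for the last assertion: if $\Delta_1 \cong \PP^1$, then since $\Delta_\pi$ is connected (a standard conic bundle with $\uprho(Y/S)=1$ over a rational or general base has connected discriminant — more robustly, if $\Delta_1$ were disjoint from $\Delta'$ one argues via $\hat\Delta_1 \to \Delta_1 = \PP^1$ being an étale double cover, hence $\hat\Delta_1 = \PP^1 \sqcup \PP^1$, which would disconnect $\hat\Delta_\pi$ and contradict irreducibility/indecomposability of the conic bundle structure, or makes the Prym/intermediate-Jacobian argument degenerate), we cannot have $\Delta_1 \cdot \Delta' = 0$; combined with the evenness already shown, $\Delta_1\cdot(\Delta_\pi - \Delta_1) \ge 2$. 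I would state the needed non-disconnectedness as following from $\uprho(Y/S)=1$ together with the fact that a double cover $\hat\Delta_\pi \to \Delta_\pi$ splitting off a connected component forces a corresponding splitting of $Y$ over that locus, impossible for a standard conic bundle.
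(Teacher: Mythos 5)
Your overall strategy is the paper's: restrict the associated double cover $\tau$ to $\Delta_1$, use evenness of the branch degree of a double cover of a smooth curve, and in the rational case use that an unramified double cover of $\PP^1$ splits, making $\pi^{-1}(\Delta_1)$ reducible and contradicting $\uprho(Y/S)=1$. But there is a genuine gap exactly at the point you flag as the "main obstacle", and it stems from a missing (and in one place misstated) local fact. For a conic bundle with \emph{smooth} total space, a fiber is a double line if and only if the discriminant is singular at the corresponding point: writing the conic locally as $\sum\phi_{ij}x_ix_j=0$, rank $2$ at $s$ forces $\det\|\phi_{ij}\|$ to vanish to order exactly $1$ (smoothness of $Y$ at the vertex), while rank $1$ forces $\det\|\phi_{ij}\|$ to vanish to order $\ge 2$. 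Since $\Delta_{\pi}$ is a normal crossing curve and $\Delta_1$ is smooth, the singular points of $\Delta_{\pi}$ on $\Delta_1$ are precisely $\Delta_1\cap\overline{(\Delta_{\pi}\setminus\Delta_1)}$. Hence there is no separate set of "double-line points" on $\Delta_1$: the branch locus of $\tau_1:\hat\Delta_1\to\Delta_1$ is exactly $\Delta_1\cap\overline{(\Delta_{\pi}\setminus\Delta_1)}$, and the parity statement is immediate, with no extra bookkeeping. (Conversely, your assertion that the fiber over a point of $\Delta_1\cap\Delta'$ is a line pair is wrong — it is a double line — although your conclusion that $\tau_1$ has a simple branch point there is correct; also the local model $x_1x_2=uv\,x_3^2$ has a nodal total space, so it is not a model for a standard conic bundle.) The appeals to "trivial monodromy" and to a "pattern making their count have fixed parity" do not constitute an argument, so as written the evenness claim is not proved.

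The same gap infects the second assertion: without knowing that double-line fibers occur only over singular points of $\Delta_{\pi}$, the hypothesis $\Delta_1\cap\Delta'=\varnothing$ does not give you an \'etale cover $\hat\Delta_1\to\Delta_1\simeq\PP^1$, so the splitting argument cannot be launched. Once the local fact above is in place, your argument closes up and coincides with the paper's proof: the unbranched double cover over $\PP^1$ splits, $\pi^{-1}(\Delta_1)$ becomes reducible, and this contradicts $\uprho(Y/S)=1$ (a component of $\pi^{-1}(\Delta_1)$ would be numerically a pullback from $S$, yet it meets the other line of a general fiber over $\Delta_1$ positively). Note also that the side remark about connectedness of the discriminant is not needed and, as a general statement, is only available when $Y$ is rational.
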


\begin{proof}
Let $\tau:\hat \Delta_{\pi}\to \Delta_{\pi}$ be the double cover associated to $\pi$.
Its restriction $\tau_1:\hat \Delta_1\to \Delta_1$ over $\Delta_1$ is branched at
the points $\Delta_1\cap \overline{(\Delta_{\pi}\setminus \Delta_1)}$ and 
the number of this points must be even.
If $\Delta_1$ is a smooth rational curve and $\Delta_1\cap \overline{(\Delta_{\pi}\setminus \Delta_1)}=\varnothing$, then 
the 
double cover $\tau_1$ splits.
Hence the inverse image $\pi^{-1}(\Delta_1)$ is reducible.
This contradicts our assumption that $\pi$ is a
standard conic bundle.
\end{proof}

\subsection{Prym varieties}
Let $(\hat{\Gamma},\iota)$ be a pair consisting of a connected curve $\hat{\Gamma}$
with only nodes (ordinary double points) as singularities and an involution $\iota: \hat{\Gamma}\to \hat{\Gamma}$ 
satisfying the following condition: 
\begin{enumerate}
\item[(B)] 
The set of fixed points of $\iota$ 
coincides with the set of singular points of $\hat{\Gamma}$ and $\iota$ preserves the branches of $\hat{\Gamma}$ at all 
the singular points. 
\end{enumerate}
Such pairs $(\hat{\Gamma},\iota)$ are called \emph{Beauville pairs}.
For example, if $\tau:\hat \Delta_{\pi}\to \Delta_{\pi}$ is the double cover associated to a standard conic bundle $\pi: Y\to S$,
then there exists a natural involution $\iota:\hat \Delta_{\pi}\to\hat \Delta_{\pi}$
such that $\Delta_{\pi}=\hat \Delta_{\pi}/\langle\iota\rangle$ and $(\hat{\Gamma},\iota)$ is a Beauville pair.

To each Beauville pair $(\hat{\Gamma},\iota)$ 
one can associate a principally polarized 
abelian variety $\Pr (\hat{\Gamma},\iota)$ which is called the \emph{Prym variety} of $(\hat{\Gamma},\iota)$,
see \cite{Mumford1974}, \cite{Beauville:Prym}, and \cite{Shokurov:Prym}
for details. Abusing notation sometimes we will write $\Pr (\hat{\Gamma}/\Gamma)$ instead of $\Pr (\hat{\Gamma},\iota)$,
where $\Gamma:= \hat{\Gamma}/\iota$.

\begin{theorem}[{\cite{Shokurov:Prym}}]
\label{thm:sho}
In the above notation, suppose that the curve $\Gamma$ is connected, and the following condition
holds: 
\begin{enumerate}
\item[(S)] 
for any decomposition $\Gamma =\Gamma' + \Gamma''$ in a sum of effective 
divisors, one has $\# (\Gamma' \cap \Gamma'') \ge 4$. 
\end{enumerate}
Then $\Pr (\hat {\Gamma}/\Gamma)$ is
a sum of Jacobians of smooth curves if and only if $\Gamma$ is either 
hyperelliptic, or 
trigonal, or quasi-trigonal, or a
plane quintic curve such that 
the double 
cover $\hat {\Gamma}\to \Gamma$ corresponds to an even theta characteristic.
\end{theorem}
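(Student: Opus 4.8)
\emph{Remark on the plan.} This is Shokurov's criterion, and my plan reflects the architecture of its proof via the singularities of the Prym theta divisor. The plan is to reduce the statement to an analysis of the singular locus of the theta divisor $\Xi$ of the Prym, and then to classify the curves $\Gamma$ for which this singular locus is as large as possible. Throughout, condition (S) guarantees that the polarization on $\Pr(\hat\Gamma/\Gamma)$ is genuinely principal and that $\Xi$ is a well-behaved divisor, so that the decomposition theory of principally polarized abelian varieties applies: by the uniqueness of the decomposition of a principally polarized abelian variety into indecomposable factors, $\Pr(\hat\Gamma/\Gamma)$ is a sum of Jacobians of smooth curves if and only if each indecomposable factor of $(\Pr,\Xi)$ is either trivial or a Jacobian, the latter being detected by the geometry of $\Sing\Xi$.

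For the two directions I would proceed separately. For the ``if'' direction I would treat each of the four families by an explicit construction. When $\Gamma$ is hyperelliptic the Prym splits as a product of two Jacobians, which can be read off directly from the involution; when $\Gamma$ is trigonal (or quasi-trigonal, handled as a degenerate limit) I would invoke the trigonal construction of Recillas and Donagi, which identifies $\Pr(\hat\Gamma/\Gamma)$ with the Jacobian of an auxiliary curve; and when $\Gamma$ is a plane quintic I would use the classical correspondence relating such Pryms to intermediate Jacobians of cubic threefolds, in which the even theta characteristic is exactly the case where the associated ppav degenerates to the Jacobian of a curve rather than a genuinely non-Jacobian abelian variety.

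The core of the argument is the ``only if'' direction. Here I would use the Prym analogue of the Riemann--Kempf singularity theorem (Mumford, extended by Welters and by Shokurov to the nodal Beauville-pair setting), which describes $\Sing\Xi$ in terms of line bundles $L$ on $\hat\Gamma$ with prescribed norm and with $h^0(L)$ subject to a parity constraint. Assuming that $\Pr(\hat\Gamma/\Gamma)$ is a sum of Jacobians forces $\Xi$ to be abnormally singular, and through this dictionary the abnormal singularity translates into the existence of a special linear series on $\Gamma$. The remaining task is to classify the curves carrying such a series: condition (S) excludes the degenerate configurations, and a Brill--Noether and projective-geometry analysis of the Prym-canonical model of $\Gamma$ in $\PP^{p-1}$, where $p=\dim\Pr$, then pins the possibilities down to exactly the hyperelliptic, trigonal, quasi-trigonal, and plane-quintic curves, with the parity constraint above producing the even theta characteristic in the quintic case.

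I expect the main obstacle to be this final classification step together with the sharp form of the singularity criterion. The naive Andreotti--Mayer dimension count does not by itself separate a single Jacobian from a non-Jacobian ppav, so one needs the finer decomposition of $\Sing\Xi$ into its stable and exceptional parts and a careful study of the associated Gauss map. Handling the boundary cases --- distinguishing the even and odd theta characteristics on a plane quintic, and treating the quasi-trigonal family as a controlled limit of the trigonal one --- is where the delicate part of the work lies.
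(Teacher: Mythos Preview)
The paper does not prove this theorem at all: it is stated with a citation to \cite{Shokurov:Prym} and used as a black box. There is nothing to compare your proposal against, because the paper's ``proof'' consists solely of the reference. Your write-up is a sketch of Shokurov's original argument rather than anything the present paper attempts, so for the purposes of this paper no proof is required --- a citation suffices.

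That said, as an outline of Shokurov's own proof your plan is broadly accurate: the ``if'' direction does go through Mumford's hyperelliptic decomposition, the Recillas/Donagi trigonal construction, and the cubic-threefold correspondence for plane quintics, while the ``only if'' direction is indeed driven by the structure of $\Sing\Xi$ and the Prym-canonical analysis. If you were actually trying to reproduce Shokurov's paper, the place where your sketch is thinnest is exactly where you flag it --- the fine analysis separating stable and exceptional singularities and the handling of the parity condition on the quintic --- but none of that is relevant to the present paper, which only consumes the statement.
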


If $\p(\Gamma)\ge 2$, then the condition (S) implies that $\Gamma$ is a stable curve and the 
canonical map 
\[
\Phi_{|K_{\Gamma}|}:\Gamma \dashrightarrow \PP^{\p(\Gamma)-1}
\]
is a morphism that 
does not contract components. If the curve $\Gamma$ does not satisfy 
the condition (S), then in order to apply Theorem~\ref{thm:sho} one can decompose the Prym variety as follows.

\begin{remark}[{\cite[Corollary 3.16, Remark 3.17]{Shokurov:Prym}}]
\label{rem:Prym}
Let $(\hat{\Gamma},\iota)$ be a Beauville pair. Suppose that there exists a 
decomposition $\hat{\Gamma} = \hat{\Gamma}' \cup \hat{\Gamma}''$ with $\hat{\Gamma}' \cap \hat{\Gamma}''=\{P_1,\, P_2\}$. 
Denote by $\check{\Gamma}'$ and $\check{\Gamma}''$ the curves obtained
by identifying the points $P_1$ and $P_2$ by means of the involutions $\iota'$ and $\iota''$ induced by
$\iota$. Then $(\check{\Gamma}',\iota')$ and $(\check{\Gamma}'',\iota'')$ are also Beauville pairs and 
\[
\Pr (\hat {\Gamma}, \iota)= \Pr (\check{\Gamma}', \iota') \oplus \Pr (\check{\Gamma}'', \iota'').
\] 
In particular, if $\hat{\Gamma}''$ is a chain of rational curves, then
\[
\Pr (\hat {\Gamma}, \iota)= \Pr (\check{\Gamma}', \iota').
\]
\end{remark}

\begin{theorem}[{\cite{Clemens-Griffiths}}, {\cite{Beauville:Prym}}]
\label{thm:J-Pr}
Let $\pi:Y\to S$ be a standard conic bundle
and let $\tau:\hat \Delta_{\pi}\to \Delta_{\pi}$ is the double cover associated to $\pi$.
If $Y$ is rational, then the Prym variety $\Pr(\hat \Delta_{\pi}/\Delta_{\pi})$
is isomorphic 
\textup(as a principally polarized abelian variety\textup) to a sum of Jacobians of curves.
\end{theorem}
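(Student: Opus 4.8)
The plan is to reduce the statement to the two results it is built on: the Clemens--Griffiths criterion and Beauville's computation of the intermediate Jacobian of a conic bundle.

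First I would recall that, since $-K_Y$ is $\pi$-ample, one has $H^{3,0}(Y)=H^0(Y,K_Y)=0$, so the intermediate Jacobian $\J(Y)=H^{2,1}(Y)^*/H_3(Y,\ZZ)$ is a genuine principally polarized abelian variety, with polarization given by the unimodular cup-product pairing on $H^3(Y,\ZZ)$ modulo torsion. Assuming $Y$ rational, I would resolve a birational map $\PP^3\dashrightarrow Y$ by a smooth projective threefold $W$ joined to $\PP^3$ and to $Y$ through chains of blow-ups along smooth centers (points and smooth curves), and invoke the Clemens--Griffiths blow-up formula: blowing up a point does not change $\J$, while blowing up a smooth curve $C$ replaces $\J$ by $\J\oplus\J(C)$ as principally polarized abelian varieties. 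Running this along both chains produces smooth projective curves $C_i$, $C'_j$ with
\[
\bigoplus_i\J(C_i)\;\cong\;\J(W)\;\cong\;\J(Y)\oplus\bigoplus_j\J(C'_j)
\]
as principally polarized abelian varieties. I would then quote the unique decomposition theorem for principally polarized abelian varieties (every such variety is a direct sum of indecomposable principally polarized summands, uniquely up to order and isomorphism), together with the facts that $\J(C)$ is the product of the Jacobians of the connected components of $C$ of positive genus and that each such factor is indecomposable; cancelling common summands above, every indecomposable summand of $\J(Y)$ is the Jacobian of a smooth connected curve, so $\J(Y)$ is a sum of Jacobians of smooth curves.

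The remaining step is to identify $\J(Y)$ with the Prym variety. Here I would first note that, being dominated by the rational threefold $Y$, the surface $S$ is unirational, hence rational by Castelnuovo's theorem, so $H^1(S,\ZZ)=0$. Then, for the standard conic bundle $\pi:Y\to S$, the Leray spectral sequence combined with the structure of the degenerate fibers --- each a line pair whose two components are exchanged by the covering involution $\iota$ of $\tau:\hat \Delta_{\pi}\to \Delta_{\pi}$ --- identifies $H^3(Y,\ZZ)$, as a polarized Hodge structure, with the $\iota$-anti-invariant part of $H^1(\hat \Delta_{\pi},\ZZ)$, the contribution of $H^1(S)$ being absent because $S$ is rational; passing to intermediate Jacobians gives an isomorphism of principally polarized abelian varieties $\J(Y)\cong\Pr(\hat \Delta_{\pi}/\Delta_{\pi})$. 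Together with the previous step this yields the claim.

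The real work lies in the two cited inputs: resolving the birational map by sequences of smooth blow-ups and controlling the principal polarization through the blow-up formula and the cancellation \cite{Clemens-Griffiths}, and the mixed-Hodge-theoretic bookkeeping over the normal-crossing discriminant that yields the Prym together with its canonical polarization \cite{Beauville:Prym}. The only additional remark the present formulation requires is that rationality of $Y$ forces the base $S$ to be rational, which is what makes Beauville's identification --- originally stated over $\PP^2$ --- applicable here.
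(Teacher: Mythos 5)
Your argument is correct and is essentially the proof the cited literature gives: the paper itself offers no proof of this theorem, quoting it from \cite{Clemens-Griffiths} and \cite{Beauville:Prym}, and your sketch (the Clemens--Griffiths criterion via the blow-up formula and the unique decomposition of principally polarized abelian varieties into indecomposables, combined with Beauville's identification $\J(Y)\cong\Pr(\hat\Delta_{\pi}/\Delta_{\pi})$ over the base $S$, which is rational because it is dominated by the rational $Y$) is exactly what those references establish. The only slight imprecision is that the resolution $W\to Y$ need not itself be a chain of smooth blow-ups; one instead uses, as in Clemens--Griffiths, that for any birational morphism of smooth projective threefolds $W\to Y$ the intermediate Jacobian $\J(Y)$ is a principally polarized direct summand of $\J(W)$, after which your cancellation argument goes through unchanged.
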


Note that the discriminant curve must be connected if $Y$ is rational \cite{Artin-Mumford-1972}.

\section{$\QQ$-conic bundles}
In this section we collect basic facts on three-dimensional Mori fiber spaces with 
one-dimensional 
fibers. For more detailed information and references
we refer to~\cite{MP:cb1}, \cite{MP:cb2}, \cite{P:ICM}.

\begin{definition}[\cite{MP:cb1}]
A \emph{$\QQ$-conic bundle} is a contraction $\pi:Y\to S$
from a threefold to a surface such that $Y$ is normal and has only terminal
singularities, all fibers are one-dimensional, and $-K_Y$ is $\pi$-ample.
We say that a $\QQ$-conic bundle $\pi:Y\to S$ is \emph{extremal} if $Y$ is 
$\QQ$-factorial and
the relative Picard number $\uprho(Y/S)$ equals $1$.
We say that $\pi:Y\to S$ is a \emph{$\QQ$-conic bundle germ} over a point 
$o\in S$
if $S$ (resp. $Y$) is regarded as a germ at $o$ (resp. along $\pi^{-1}(o)$).
The \emph{discriminant divisor} of a $\QQ$-conic bundle $\pi:Y\to S$ is the 
curve $\Delta_\pi\subset S$ that is the union of the one-dimensional
components of the set 
\[
\{s\in S \mid \text{ $\pi$ is not smooth over $s$}\}.
\]
\end{definition}

\begin{theorem}[{\cite[Theorem~1.2.7]{MP:cb1}}]
\label{thm:base-surface}
Let $\pi: Y\to S$ be a $\QQ$-conic bundle. Then the singularities of the base 
$S$ are at worst 
Du Val of type \type{A}.
\end{theorem}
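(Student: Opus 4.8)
The plan is to prove this by a local analysis of a $\QQ$-conic bundle germ, pinning down the surface singularity $(S\ni o)$ by squeezing it between the fibration and the terminal singularities of the total space. The statement is local over $o\in S$, so I would replace $\pi$ by a $\QQ$-conic bundle germ $\pi\colon Y\to(S\ni o)$. Relative Kawamata--Viehweg vanishing applied to $\OOO_Y=\omega_Y\otimes\OOO_Y(-K_Y)$, with $-K_Y$ being $\pi$-ample, gives $R^i\pi_*\OOO_Y=0$ for $i>0$, while $\pi_*\OOO_Y=\OOO_S$; in particular the general fibre of $\pi$ is $\PP^1$. Since $Y$ is terminal, $\Sing(Y)$ is a finite set, contained in $\pi^{-1}(o)$ after shrinking, and each of its points is one of the threefold terminal singularities in the standard classification.

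The first step is to show that $(S\ni o)$ is a quotient singularity. Since $Y$ has rational singularities and $R^i\pi_*\OOO_Y=0$ for $i>0$, a standard argument (using a resolution of $Y$ dominating a resolution of $S$) shows that $(S\ni o)$ is a rational surface singularity; hence $\Cl(S,o)$ is finite and $K_S$ is $\QQ$-Cartier. Next I would pick $m\gg0$ and a general $D\in|{-mK_Y}|$ — a linear system that is base-point-free over the germ because $-K_Y$ is $\pi$-ample — and set $\Delta_Y:=\tfrac1mD$, so that $K_Y+\Delta_Y\qq0$ and $(Y,\Delta_Y)$ is klt. Then $\pi$ is a klt-trivial fibration with general fibre $\PP^1$, and the canonical bundle formula produces an effective $\QQ$-divisor $\Delta_S$ on $S$, supported on $\Delta_\pi$, with $K_S+\Delta_S\qq0$ and $(S,\Delta_S)$ klt (the moduli part vanishes since the fibres are rational). \emph{A fortiori} $(S\ni o)$ is log terminal, hence by Kawamata's classification of two-dimensional log terminal singularities it is a quotient singularity $\CC^2/G$ for a small finite subgroup $G\subset\mathrm{GL}_2(\CC)$.

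The crux is to upgrade this to ``Du Val of type $\mathrm A$'': I must show that $G$ is cyclic acting with weights $\frac1n(1,-1)$, equivalently that $K_S$ is Cartier. For this I would pass to the smooth cover $\mu\colon\CC^2\to\CC^2/G=(S\ni o)$, which is \'etale in codimension one, pull $\pi$ back along $\mu$, normalise, and extend the result across the origin to a $G$-equivariant conic bundle $\tilde\pi\colon\tilde Y\to\CC^2$ with $\tilde Y/G=Y$. Since $G$ acts freely in codimension one, the singularities of $Y$ over $\pi^{-1}(o)$ are precisely the $G$-quotient singularities of $\tilde Y$ along the preimage of $\pi^{-1}(o)$, so the Reid--Tai criterion translates ``$Y$ is terminal'' into arithmetic conditions on the weights of the $G$-action; these conditions should force $G$ to be cyclic acting as $\frac1n(1,-1)$, i.e. $S\simeq A_{n-1}$. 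When $o\notin\Delta_\pi$ one has $\tilde Y\simeq\CC^2\times\PP^1$ with a linear $G$-action and the verification is short. When $o\in\Delta_\pi$ the fibration $\tilde\pi$ acquires degenerate fibres along the preimage of $\Delta_\pi$, and one must also bring in the discriminant curve at the singular point of $S$ and the associated double cover of $\Delta_\pi$; the argument then becomes a case analysis keyed to the type of the central fibre $\pi^{-1}(o)$ and to the terminal points of $Y$ lying on it.

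I expect the reduction to a quotient singularity to be the soft part, and essentially all of the genuine work — together with any lengthy case distinction — to be concentrated in the last step, specifically in the case $o\in\Delta_\pi$: there one must control simultaneously the behaviour of the discriminant curve at the singular point of $S$, the multiplicities with which $\pi$ degenerates over it, and the precise types of the terminal singularities of $Y$ lying over $o$, while also ruling out the non-cyclic two-dimensional quotient singularities.
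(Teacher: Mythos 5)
There is a genuine gap, and it sits exactly where you yourself locate ``essentially all of the genuine work''. Note first that the paper does not prove this statement at all: it is quoted from Mori--Prokhorov \cite[Theorem~1.2.7]{MP:cb1}, where the proof occupies essentially the whole of a long paper and is intertwined with the general elephant problem for $\QQ$-conic bundle germs. Your soft reduction is fine in outcome: $R^i\pi_*\OOO_Y=0$, rationality of $(S\ni o)$, and log terminality of the base (via Ambro/Fujino--Gongyo type results for $K_Y+\Delta_Y\qq 0$ fibrations) do give that $(S\ni o)$ is a quotient singularity --- although the aside that ``the moduli part vanishes since the fibres are rational'' is not correct as stated for genus-$0$ klt-trivial fibrations; you only need klt-ness of the base, which follows without that claim. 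But this much was already expected (the conjecture that the base is Du Val goes back to Iskovskikh); the content of the theorem is precisely the exclusion of all non-cyclic quotient singularities and of all cyclic ones other than $\frac1n(1,-1)$, and at that point your argument says only that the Reid--Tai conditions ``should force'' the conclusion and that the case $o\in\Delta_\pi$ ``becomes a case analysis'' which you do not carry out. That is an assertion of the theorem, not a proof of it.

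Moreover, the proposed mechanism for the hard step does not work as described. After pulling back along $\CC^2\to\CC^2/G$ and normalising, the threefold $\tilde Y$ is only known to be terminal, not smooth, so at the points of $\pi^{-1}(o)$ lying over singular points of $\tilde Y$ the Reid--Tai criterion (which applies to quotients of smooth points) does not directly translate ``$Y$ terminal'' into weight conditions on the $G$-action; one would first have to understand the $G$-action on the terminal singularities of $\tilde Y$ and on the degenerate fibres of $\tilde\pi$ over the preimage of $\Delta_\pi$, which is essentially the local classification problem solved in \cite{MP:cb1} by Mori's extremal-neighbourhood and general-elephant techniques, not by a weight computation. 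Even in the case $o\notin\Delta_\pi$ the identification $\tilde Y\simeq\CC^2\times\PP^1$ with a linearisable action is something to be proved (compare Example~\ref{ex:T}, where $\Delta_\pi=\varnothing$ but $Y$ is singular), and in the case $o\in\Delta_\pi$ no argument is given at all. So the proposal reproduces the known easy half and leaves the actual theorem unproven.
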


\begin{corollary}
\label{cor:base-surface}
Let $\pi: Y\to S$ be a $\QQ$-conic bundle over a projective surface, where the variety $Y$ is 
rationally connected. Assume that $\Cl(Y)\simeq \ZZ\oplus\ZZ$. Then $\Cl(S)\simeq \ZZ$ and $S$ is 
isomorphic to one of the following four surfaces: 
\begin{enumerate}
\renewcommand{\theenumi}{\rm (\alph{enumi})}
\renewcommand{\labelenumi}{\rm (\alph{enumi})}
\item 
\label{cor:base-surface1}
$\PP^2$, 
\item 
\label{cor:base-surface2}
$\PP(1^2,2)$,
\item 
\label{cor:base-surface3}
$\PP(1,2,3)$, 
\item 
\label{cor:base-surface4}
a quasi-smooth hypersurface of degree $6$ in $\PP(1,2,3,5)$.
\end{enumerate}
Moreover, letting $h$ be the positive generator of $\Cl(S)$ we have 
$h^2=1$, $1/2$, $1/6$, and $1/5$ in the cases~\ref{cor:base-surface1},
\ref{cor:base-surface2},
\ref{cor:base-surface3},
\ref{cor:base-surface4}, respectively.
\end{corollary}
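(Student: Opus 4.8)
The plan is to pin down the numerical type of $S$ in four stages: show that $\uprho(S)=1$ and that $S$ is a del Pezzo surface; show that $\Cl(S)$ is torsion-free, hence $\simeq\ZZ$; run a short lattice computation on the minimal resolution that leaves exactly four possibilities; and then read off $h^{2}$ in each case.

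I would first set up the surface. By Theorem~\ref{thm:base-surface}, $S$ has at worst Du Val singularities of type \type{A}; in particular $S$ is $\QQ$-factorial, $K_S$ is $\QQ$-Cartier, and $\uprho(S)=\rk\Cl(S)$. Since $\pi$ is surjective and $Y$ is rationally connected, $S$ is rationally connected, hence rational. Pulling back divisor classes gives an injection $\pi^{*}\colon\Cl(S)_{\QQ}\hookrightarrow\Cl(Y)_{\QQ}\simeq\QQ^{2}$ (because $\pi_{*}\OOO_Y=\OOO_S$), while $-K_Y$ is $\pi$-ample and so is not a pullback from $S$; therefore $\rk\Cl(S)\le1$, and $\rk\Cl(S)\ge1$ since $S$ is projective, so $\uprho(S)=1$. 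Let $\sigma\colon\tilde S\to S$ be the minimal resolution; it is crepant, $K_{\tilde S}=\sigma^{*}K_S$, because the singularities are Du Val, so $\tilde S$ is smooth and rational. As $\uprho(S)=1$, the class $K_S$ is anti-ample, numerically trivial, or ample; the last two would force $\tilde S$ to have Kodaira dimension $0$ or $2$, contradicting rationality. Hence $-K_S$ is ample: $S$ is a del Pezzo surface, $\tilde S$ is a weak del Pezzo surface of degree $d:=K_S^{2}\in\{1,\dots,9\}$, and the $\sigma$-exceptional locus consists of $9-d$ disjoint $(-2)$-curves whose dual graph is a disjoint union of Dynkin diagrams $A_{n_1},\dots,A_{n_k}$ with $\sum_i n_i=9-d$ (using $\uprho(\tilde S)=10-d$).

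Next I would show $\Cl(S)\simeq\ZZ$ and run the classification. For torsion-freeness: any nonzero torsion class $t\in\Cl(S)$, of order $n\ge2$, would have $\pi^{*}t=0$ in the torsion-free group $\Cl(Y)$; but $t$ defines a connected cyclic cover $S_t\to S$ \'etale in codimension one, and its base change $Y_t:=Y\times_S S_t\to S_t$ has connected fibres over a connected base, so $Y_t$ is connected, whereas $\pi^{*}t=0$ makes $Y_t\to Y$ the split disconnected cover — a contradiction. So $\Cl(S)\simeq\ZZ$; let $h$ be its ample generator. Now set $L:=\Pic(\tilde S)$, a unimodular lattice of signature $(1,9-d)$, and let $M\subseteq L$ be the negative-definite rank-$(9-d)$ sublattice spanned by the $(-2)$-curves, so $M\simeq A_{n_1}\oplus\dots\oplus A_{n_k}$ and $\Cl(S)\simeq L/M$. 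Torsion-freeness of $\Cl(S)$ means $M$ is primitive in $L$; its orthogonal complement $M^{\perp}$ then has rank one, is spanned by a primitive vector $v_0$, and $-K_{\tilde S}=c\,v_0$ for an integer $c\ge1$ with $c^{2}\mid d$. Since $L$ is unimodular and $M$ primitive, $\operatorname{disc}(M)=\operatorname{disc}(M^{\perp})$; substituting $\operatorname{disc}(A_n)=n+1$, $\operatorname{disc}(M^{\perp})=v_0^{2}=d/c^{2}$ and $d=9-\sum_i n_i$ reduces everything to the single equation
\[
\Bigl(\prod_{i=1}^{k}(n_i+1)\Bigr)\,c^{2}\;=\;9-\sum_{i=1}^{k}n_i,\qquad n_i\ge1,\ c\ge1 .
\]
An elementary enumeration shows the only solutions $(k;\{n_i\};c)$ are $(0;\varnothing;3)$, $(1;\{1\};2)$, $(2;\{1,2\};1)$, $(1;\{4\};1)$, i.e.\ $(d,\Sing S)=(9,\varnothing),\,(8,A_1),\,(6,A_1{+}A_2),\,(5,A_4)$; and for each of these $S$ is uniquely determined — respectively $\PP^{2}$; $\PP(1^2,2)$ (with resolution $\FF_2$); $\PP(1,2,3)$; and the contraction of the unique weak del Pezzo of degree $5$ with $(-2)$-configuration $A_4$, realized as a quasi-smooth sextic $X_6\subset\PP(1,2,3,5)$ whose only singular point is the image $\tfrac15(2,3)\simeq A_4$ of the weight-$5$ vertex. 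This gives the list \ref{cor:base-surface1}--\ref{cor:base-surface4}.

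Finally, I would compute $h^{2}$. Pushing $-K_{\tilde S}=c\,v_0$ forward, $-K_S=c\,\sigma_{*}v_0$, and $\sigma_{*}v_0$ generates the subgroup $(M^{\perp}+M)/M\subseteq\Cl(S)=\ZZ h$, which has index $\lvert\operatorname{disc}(M)\rvert=\prod_i(n_i+1)$; hence $-K_S=c\prod_i(n_i+1)\,h$. Comparing with $-K_S=\OOO_S(\sum w_i-\deg S)$ (adjunction) forces $\OOO_S(1)=h$ (a line, for $\PP^{2}$), and then, using the displayed equation, $h^{2}=K_S^{2}/\bigl(c\prod_i(n_i+1)\bigr)^{2}=1/\prod_i(n_i+1)$, which is $1,\tfrac12,\tfrac16,\tfrac15$ in the four cases. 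I expect the crux to be this classification stage: extracting torsion-freeness of $\Cl(S)$ from $\Cl(Y)\simeq\ZZ\oplus\ZZ$, and the lattice bookkeeping that compresses ``$\uprho(S)=1$, Du Val of type \type{A}, $\Cl(S)\simeq\ZZ$'' into the single equation with only four solutions; the surrounding steps — rationality, del Pezzo-ness, and the degree computation — are routine.
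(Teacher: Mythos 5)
Your argument is correct, and it reaches the same list by a genuinely different mechanism than the paper. The paper gets $\Cl(S)\simeq\ZZ$ at once from the injectivity of $\pi^*\colon\Cl(S)\hookrightarrow\Cl(Y)$ (both varieties have isolated singularities), notes $-K_S$ is ample because $S$ is rationally connected, and then simply cites Miyanishi--Zhang's classification of rank-one Gorenstein del Pezzo surfaces to conclude that the singularity collection is $\varnothing$, \type{A_1}, \type{A_1}$+$\type{A_2}, or \type{A_4} and that $S$ is determined by its singularity type; the $h^2$ values are then read off the explicit models. You instead re-prove the classification yourself: torsion-freeness via the cyclic-cover trick (the paper's direct injectivity of $\pi^*$ gives this more quickly, since $\Cl(Y)$ is torsion-free), and then the lattice computation on the minimal resolution --- primitivity of the $A_{n_1}\oplus\dots\oplus A_{n_k}$ sublattice in the unimodular lattice $\Pic(\tilde S)$, equality of discriminants, and the resulting equation $c^2\prod(n_i+1)=9-\sum n_i$ --- whose only solutions are the four cases. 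This buys self-containedness and gives the values $h^2=1/\prod(n_i+1)$ as a byproduct of the same discriminant bookkeeping, rather than by inspecting the weighted models. The one step you assert rather than prove is uniqueness of the weak del Pezzo (hence of $S$) with each of the four $(-2)$-configurations, together with the identification of the degree-$5$, \type{A_4} surface with a quasi-smooth $X_6\subset\PP(1,2,3,5)$; these are standard (and easy for degree $\ge 5$, since $\tilde S$ is a blowup of $\PP^2$ in at most four, possibly infinitely near, points), and the paper covers exactly this point by citing \cite[Lemma~7]{Miyanishi-Zhang:88}, so it is a presentational shortcut rather than a gap.
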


\begin{proof} 
Since both $Y$ and $S$ have only isolated singularities, the pull-back map $\pi^*: \Cl(S) \hookrightarrow 
\Cl(Y)$ is well-defined and it is injective. Therefore, $\Cl(S)\simeq \ZZ$. By Theorem~\ref{thm:base-surface}
the singularities of $S$ are at worst Du Val of type \type{A}. 
Since the surface $S$ is rationally connected, $-K_S$ is ample.
Then it follows from the main result of 
\cite{Miyanishi-Zhang:88} that the collection of singularities of $S$ is one of the following: 
$\varnothing$, \type{A_1}, \type{A_1}+\type{A_2}, \type{A_4}, and, moreover, 
the surface $S$ is determined uniquely up to isomorphisms by its singularity type \cite[Lemma~7]{Miyanishi-Zhang:88}.
On the other hand, the surfaces in~\ref{cor:base-surface1},
\ref{cor:base-surface2},
\ref{cor:base-surface3},
\ref{cor:base-surface4} are Du Val del Pezzos with desired types of singularities and $\Cl(S)\simeq \ZZ$.
\end{proof}

\begin{lemma}
\label{lemma:cb}
Let $\pi: Y\to S$ be a $\QQ$-conic bundle, where the variety $Y$ is projective, 
and let $\Delta_{\pi}\subset S$ be its discriminant curve.
Let
$h$ be an ample divisor on $S$ and let $F:=\pi^{-1}( h)$. Then
\begin{eqnarray}
\label{eq:cb:a}
K_Y\cdot F^2&=& -2 h^2,
\\
\label{eq:cb:b}
K_Y^2\cdot F&=&-4K_S\cdot h-h\cdot\Delta_{\pi}.
\end{eqnarray}
\end{lemma}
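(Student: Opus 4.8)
The two identities are numerical intersection computations on the threefold $Y$, so the natural approach is to pull back a general member of $|h|$ and do adjunction on surfaces and curves. Set $F=\pi^{-1}(h)$ for a general ample divisor $h$ on $S$; by Bertini and the fact that $\pi$ has one-dimensional fibers, $F$ is an irreducible surface which is a conic bundle over the curve $C\in|h|$, and since $h$ is general we may assume $C$ is smooth and that $F$ has only the singularities of $Y$ lying on it, none of which interferes with the numerical computation. The divisor class $F$ is $\QQ$-Cartier (pulled back from $S$), so all the relevant intersection numbers make sense.

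\emph{Proof of \eqref{eq:cb:a}.} Since $F=\pi^*h$ and $h^2$ is a number (an effective $0$-cycle on $S$ of that degree, or rather a $\QQ$-number as $S$ may be singular), we have $F^2 = \pi^*(h^2)$, which is numerically the class of a general fiber of $\pi$ multiplied by $h^2$. Thus $K_Y\cdot F^2 = h^2\cdot (K_Y\cdot \ell)$ where $\ell$ is a general fiber. A general fiber of a ($\QQ$-)conic bundle is a smooth rational curve with $-K_Y\cdot\ell = 2$ by adjunction ($\ell\simeq\PP^1$, $N_{\ell/Y}=\OOO\oplus\OOO$), so $K_Y\cdot F^2 = -2h^2$. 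One should note that the generality of $h$ keeps $\ell$ away from $\Sing Y$.

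\emph{Proof of \eqref{eq:cb:b}.} Write $K_Y^2\cdot F = (K_Y|_F)^2$ computed on the surface $F$. By adjunction $K_F = (K_Y+F)|_F$, so $K_Y|_F = K_F - F|_F$. Now $F|_F = \pi^*h|_F$ is $\pi_F^*(h|_C)$ where $\pi_F:F\to C$ is the induced conic bundle and $h|_C$ has degree $h^2$; hence $(F|_F)^2 = 0$ and $K_F\cdot F|_F = (\text{deg of }K_F\text{ on a general fiber of }\pi_F)\cdot h^2 = -2h^2$. Therefore $K_Y^2\cdot F = K_F^2 - 2K_F\cdot F|_F + (F|_F)^2 = K_F^2 + 4h^2$. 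It remains to compute $K_F^2$ for the conic bundle surface $\pi_F:F\to C$. Using $K_F^2 = 8(1-g(C)) - (\text{number of degenerate fibers})$ for a standard conic bundle surface — equivalently $8\chi(\OOO_F) - (\text{deg }\Delta_{\pi_F})$ — and identifying the degenerate fibers of $\pi_F$ with the points $C\cap\Delta_\pi$, so their number is $h\cdot\Delta_\pi$, together with $8(1-g(C)) = -4K_S\cdot h$ by adjunction on $S$ (namely $2g(C)-2 = (K_S+h)\cdot h$, i.e. $8(1-g(C)) = -4(K_S+h)\cdot h = -4K_S\cdot h - 4h^2$), we get $K_F^2 = -4K_S\cdot h - 4h^2 - h\cdot\Delta_\pi$, and combining, $K_Y^2\cdot F = -4K_S\cdot h - h\cdot\Delta_\pi$, as claimed.

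\emph{Main obstacle.} The delicate point is that $Y$ and $S$ are only $\QQ$-factorial with terminal (resp. Du Val) singularities, so $F$ is not a smooth conic bundle surface in the classical sense and the formula $K_F^2 = 8(1-g(C)) - \#\{\text{degenerate fibers}\}$ must be justified in this singular setting. The cleanest route is to choose $h$ general enough that $C$ avoids the singular points of $S$ and $F$ meets $\Sing Y$ only in a way that does not affect $K_F^2$ (e.g. passing to a resolution and tracking discrepancies, or invoking that $F$ itself has at worst canonical Gorenstein surface singularities along which the relevant contributions cancel); alternatively, one may appeal directly to the structure theory of $\QQ$-conic bundles, where the local analytic description of degenerate fibers over $\Delta_\pi$ is known, to count their contribution to $K_F^2$. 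Once the bookkeeping of singularities is handled, everything else is the routine adjunction above.
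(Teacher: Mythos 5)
Your proof is correct and essentially the paper's own argument: \eqref{eq:cb:a} via the projection formula applied to a general fiber, and \eqref{eq:cb:b} via adjunction $K_F=(K_Y+F)|_F$ on $F=\pi^{-1}(C)$ for a general very ample $C\in|h|$ together with the count of $h\cdot\Delta_\pi$ degenerate fibers; the only cosmetic difference is that you compute $K_F^2$ as $8(1-\g(C))-\#\{\text{degenerate fibers}\}$ while the paper uses Noether's formula $12\chi(\OOO_F)-\chi_{\mathrm{top}}(F)$, which is the same bookkeeping. Your ``main obstacle'' is not really one: terminal threefold singularities are isolated, so a general member $F$ of the base-point-free system $\pi^*|h|$ avoids $\Sing(Y)$ altogether and is smooth (and meets $\Delta_\pi$-fibers only in reduced pairs of lines over transversal intersection points), which is exactly the reduction the paper makes.
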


\begin{proof}
By the projection formula we have $K_Y\cdot F^2=K_Y\cdot \pi^* h^2=-2h^2$. 
Further,
we may assume that $h$ is very ample smooth curve and $F$ is a smooth surface. 
Then
\[
K_F^2=(K_Y+F)^2\cdot F=K_Y^2\cdot F+2K_Y\cdot F^2=K_Y^2\cdot F-4h^2.
\]
On the other hand, by Noether's formula
\[
K_F^2=12\chi(\OOO_F)-\upchi_{\mathrm{top}}(F)= 12(1-\g(h))- 
(2-4\g(h)+2+h\cdot\Delta_{\pi})=
-4(K_S+h)\cdot h-h\cdot\Delta_{\pi}.
\]
Combining, we obtain
\[
K_Y^2\cdot F-4h^2=K_F^2=-4K_S\cdot h-4h^2-h\cdot\Delta_{\pi},
\qquad 
K_Y^2\cdot F+h\cdot\Delta_{\pi}=-4K_S\cdot h.\qedhere
\]
\end{proof}

Below we provide several ``basic'' examples of $\QQ$-conic bundles.
We are interested in the local structure of them near the singular fiber.
Denote by $\zeta_r$ a primitive $r$-th root of unity.

\begin{example}[Type \typec{IF_1}]
\label{ex:ODP}
The variety $Y$ is given in $\PP^1_{y_1,y_2, y_3}\times
\CC^2_{u,v}$ by the equation 
\[
y_1^2+y_2^2+uvy_3^2=0,
\]
and $\pi\colon Y\to S$ is the
projection to 
$S:=\CC^2$.
The singular locus of $Y$ consists of one node, 
$\Delta_\pi=\{uv=0\}$, and the central fiber $C:=\pi^{-1}(0)_{\mathrm{red}}$ 
is a pair of lines.
\end{example}

\begin{example}[Type \typeci{T}{r}]
\label{ex:T}
The variety $Y$ is the quotient of $\PP^1_{y_1, y_2}\times
\CC^2_{u,v}$ by the $\mumu_r$-action
\[
(y_1, y_2;\, u,v) \longmapsto(y_1,\zeta_r\, y_2;\, \zeta_r^a\, u,\zeta_r^{-a}\, 
v),
\]
where $\gcd (r,a)=1$, and $\pi\colon Y\to S$ is the
projection to 
$S:=\CC^2/\mumu_r$. The singular locus of $Y$ consists of two (terminal) cyclic
quotient singularities of types $\frac1r(1,a,-a)$ and
$\frac1r(-1,a,-a)$. The singularity of $S$ at the origin is of type 
\types{A}{r-1}.
In this case $\Delta_\pi=\varnothing$ and for the central fiber 
$C:=\pi^{-1}(0)_{\mathrm{red}}$ we have $C\simeq\PP^1$ and
$-K_Y\cdot C=2/r$.
\end{example}

\begin{example}[Type \typec{k2A}]
\label{ex:k2A}
The variety $Y$ is the quotient of the hypersurface
\[
Y'= \{ y_1^2+uy_2^2+vy_3^2=0\}\subset \PP^2_{y_1,y_2,y_3} \times\CC^2_{u,v}
\]
by the $\mumu_{r}$-action
\[
(y_1,y_2,y_3;\, u,v)\longmapsto (\zeta_r^{a}\, y_1,\zeta_r^{-1}\, y_2,y_3;\, 
\zeta_r\, u,\zeta_r^{-1}\, v),
\]
where $r=2a+1$, and $\pi\colon Y\to S$ is the
projection to 
$S:=\CC^2/\mumu_r$. 
The singular locus of $Y$ consists of two (terminal) cyclic quotient
singularities of types $\frac 1r(a,-1,1)$ and $\frac 1r(a+1,1,-1)$.
The singularity of $S$ at the origin is of type \types{A}{r-1}.
In this case $\Delta_\pi=\{uv=0\}/\mumu_r$ and for the central fiber 
$C:=\pi^{-1}(0)_{\mathrm{red}}$ we have $C\simeq\PP^1$ and
$-K_Y\cdot C=1/r$.
\end{example}

\begin{example}[Type \typec{ID_1^\vee}]
\label{ex:ID}
The variety $Y$ is the quotient of the hypersurface
\[
\{y_1^2+y_2^2+uv y_3^2=0\}\subset \PP^2_{y_1,y_2,y_3}\times
\CC^2_{u,v}\},
\]
by the $\mumu_2$-action
\[
(y_1,y_2,y_3;\, u,v)\longmapsto (-y_1,y_2,y_3;\, -u,-v).
\]
Then $Y$ has a unique singular point which is moderate of axial weight $2$.
The singularity of $S$ at the origin is of type \types{A}1.
In this case $\Delta_\pi=\{uv=0\}/\mumu_m$ and for the central fiber 
$C:=\pi^{-1}(0)_{\mathrm{red}}$ we have $C\simeq\PP^1$ and
$-K_Y\cdot C=1$.
\end{example}

It is easy to see that in all cases \typec{IF_1}, \typeci{T}{r}, \typec{k2A} 
and \typec{ID_1^\vee}
the pair $(S,\Delta_{\pi})$ is lc at the origin. The converse is almost true: 

\begin{proposition}[{\cite[Corollary~11.8.1]{P:rat-cb:e}}]
\label{prop-rem:lc}
Suppose that $\pi:Y\to S\ni o$ is a $\QQ$-conic bundle germ
such that the pair $(S,\Delta_{\pi})$ is lc and $Y$ is not smooth along 
$\pi^{-1}(o)$. Then $\pi$ is biholomorphically equivalent to a $\QQ$-conic bundle germ 
described in Examples \xref{ex:ODP}, \xref{ex:T}, \xref{ex:k2A}, or~\xref{ex:ID}.
\end{proposition}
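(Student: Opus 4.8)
The strategy is to deduce the statement from the classification of three-dimensional $\QQ$-conic bundle germs of Mori and Prokhorov~\cite{MP:cb1},~\cite{MP:cb2}, using the hypothesis that $(S,\Delta_{\pi})$ is lc as a filter. First I would recall the available structure near the central fibre $C=\pi^{-1}(o)_{\mathrm{red}}$: by Theorem~\ref{thm:base-surface} the germ $(S,o)$ is either smooth or a Du Val point of type $\mathrm{A}_{n-1}$; along $C$ the variety $Y$ has only finitely many singular points, each of which is a terminal cyclic quotient or moderate point of small axial weight, or a compound Du Val point; and for each analytic type the classification gives $\Delta_{\pi}$ explicitly in terms of these local data. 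Since the Proposition allows $Y$ to be non-$\QQ$-factorial and $\pi$ non-extremal, one first passes, by taking a $\QQ$-factorialisation and running a relative minimal model program over $S$, to the extremal germs treated in~\cite{MP:cb1},~\cite{MP:cb2}; here one keeps track of how $\Delta_{\pi}$ changes --- a flip leaves it unchanged away from $o$, a divisorial contraction over $S$ can only delete a branch --- and one checks that the only genuinely non-$\QQ$-factorial lc germ arising this way is the one in which $Y$ has a single ordinary node on $C$, which is Example~\ref{ex:ODP}.

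Next I would impose that $(S,\Delta_{\pi})$ be lc at $o$ and show that this is very restrictive. Let $\mu\colon(\tilde S,\sum_{i=1}^{n-1}E_i)\to(S,o)$ be the minimal resolution, so that $\sum E_i$ is a chain of $(-2)$-curves, and write $K_{\tilde S}+\tilde\Delta_{\pi}+\sum a_iE_i=\mu^{*}(K_S+\Delta_{\pi})$ with $\tilde\Delta_{\pi}$ the strict transform. The pair is lc at $o$ precisely when $a_i\le 1$ for all $i$, and solving the linear system $(K_{\tilde S}+\tilde\Delta_{\pi}+\sum a_iE_i)\cdot E_j=0$ shows that this fails unless $\Delta_{\pi}$ is, at $o$, either empty, smooth, or an ordinary node, and --- when $(S,o)$ is singular --- its branches through $o$ meet the two extremal components of the exceptional chain transversally. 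In particular a compound Du Val point of $Y$ on $C$ worse than an ordinary node is excluded, since it forces $\Delta_{\pi}$ to be cuspidal or of multiplicity $\ge 3$. After this filtering the only combinatorial possibilities left are: $\Delta_{\pi}=\varnothing$ (and any Du Val surface is klt, hence lc); $S$ smooth with $\Delta_{\pi}$ a node at $o$; $(S,o)$ of type $\mathrm{A}_1$ with $\Delta_{\pi}$ the two branches through the node; and $(S,o)$ of type $\mathrm{A}_{n-1}$ with $\Delta_{\pi}$ the two smooth branches attached to the opposite ends of the chain.

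Finally I would match each surviving possibility with the classification and read off the model. If $\Delta_{\pi}=\varnothing$ and $Y$ is singular along $C$, the list of~\cite{MP:cb1} leaves exactly the family of Example~\ref{ex:T}. The smooth-base case with nodal discriminant and singular $Y$ can only be a single ordinary node on $C$, that is, Example~\ref{ex:ODP}. The $\mathrm{A}_1$-base case is the unique type carrying a moderate point of axial weight $2$, namely Example~\ref{ex:ID}. The remaining two-branch case is Example~\ref{ex:k2A}, where the requirement that $Y$ be terminal forces $(S,o)$ to be of type $\mathrm{A}_{2a}$, equivalently the local group of $S$ at $o$ to have odd order $r=2a+1$. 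In each of these four cases the prescribed singularities of $Y$, the base $(S,o)$ and $\Delta_{\pi}$ determine $\pi$ up to analytic isomorphism, and one recognises the explicit ``cyclic quotient of a conic bundle over $\CC^2$'' presentations written down in the corresponding Examples. I expect the main obstacle to be the exhaustiveness of the last two steps: one must traverse the full Mori--Prokhorov list and, for every type not among the four, either exhibit some $a_i>1$ or observe that $\Delta_{\pi}$ has a point of multiplicity $\ge 3$ or a non-nodal double point; this is elementary but long, and it has to be combined with the bookkeeping of the reduction to the extremal, $\QQ$-factorial situation.
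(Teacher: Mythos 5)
The first thing to note is that the paper does not prove this proposition at all: it is quoted verbatim from \cite[Corollary~11.8.1]{P:rat-cb:e}, so your sketch can only be compared with that source. Your overall strategy --- classify the germs and use log canonicity of $(S,\Delta_{\pi})$ as a filter --- is indeed the idea behind the cited result, and the middle step of your plan (the discrepancy computation on the minimal resolution of the type \type{A} point, showing that lc forces $\Delta_{\pi}$ to be empty, smooth, or a pair of branches meeting the ends of the exceptional chain transversally) is correct and standard. As a proof, however, the plan has genuine gaps exactly at the places where the real content lies.

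Concretely: (i) the non-extremal case is waved at. After $\QQ$-factorialization and a relative MMP you are no longer looking at the original germ, and the statement is about the original germ; the assertion that ``the only genuinely non-$\QQ$-factorial lc germ arising this way is the node, Example~\xref{ex:ODP}'' is precisely what has to be proved (for Example~\xref{ex:ODP} the reduction destroys the germ, so the work is in reconstructing it, not in the forward bookkeeping). (ii) There is no ``full Mori--Prokhorov list'' to traverse: \cite{MP:cb1} classifies extremal germs over a \emph{singular} base point and \cite{MP:cb2} treats small index; over a smooth base point no complete classification is available, so the smooth-base half of your argument needs different inputs --- the explicit analysis of Gorenstein conic germs (as in Lemma~\xref{lemma:new} of this paper) together with lower bounds on $\mult_o\Delta_{\pi}$ when $Y$ has a non-Gorenstein point over a smooth $o$ --- and these are neither named nor supplied in your plan. (iii) Your filter output silently discards the lc configurations in which $\Delta_{\pi}$ is smooth at $o$, or consists of a single branch through a singular point of $S$; these are log canonical, so they must be eliminated by structure results (showing $Y$ is then smooth along the fiber), not by the discrepancy computation, and similarly your claim that a cDV point worse than a node forces a cusp or a triple point of $\Delta_{\pi}$ needs an argument when the central fiber is a double line. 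As written, then, this is a plausible roadmap towards \cite[Corollary~11.8.1]{P:rat-cb:e} rather than a proof; the paper's own route is simply to invoke that corollary, and if you want a self-contained argument you must fill in (i)--(iii).
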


We also need to consider Gorenstein $\QQ$-conic bundles 
whose total space has singularities nodes or cusps that is more general than \typec{IF_1}:

\begin{definition}
We say that a Gorenstein $\QQ$-conic bundle germ $\pi:Y\to S\ni o$ is of \emph{type \typeGor{n}} if 
the singularities of $Y$ are at worst nodes or cusps and 
the number of singular points equals $n$.
\end{definition}

\begin{lemma}
\label{lemma:new}
Let $\pi:Y\to S\ni o$ be a $\QQ$-conic bundle germ of type \typeGor{n}. 
Then the following assertions hold:
\begin{enumerate}

\item
$n\le 2$,

\item
if $o\notin \Sing(\Delta_{\pi})$, then $Y$ is smooth along $\pi^{-1}(o)$; 
if $o\in \Sing(\Delta_{\pi})$, then 
$o\in \Delta_{\pi}$ is a double point.

\end{enumerate}
\end{lemma}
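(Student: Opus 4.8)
The plan is to analyze the $\QQ$-conic bundle germ $\pi\colon Y\to S\ni o$ over a point, using the classification results collected above. First I would recall that since $Y$ is Gorenstein with only nodes or cusps, every singular point of $Y$ is a hypersurface singularity of type \typeA{1} or \typeA{2}, hence in particular a terminal Gorenstein point, and locally the $\QQ$-conic bundle has index one. The key structural input is the relation between the singularities of the base $S$ and those of $Y$ (Theorem~\xref{thm:base-surface}): a Gorenstein $\QQ$-conic bundle over a smooth base point is, after an analytic change, a standard conic bundle, so the local picture over $o\in S$ smooth reduces to the classification of conic bundle germs — either $\pi$ is smooth over $o$, or the central fibre degenerates and then the total space acquires a node exactly when $o$ lies on $\Delta_\pi$ at a smooth point of the discriminant, or two nodes / one cusp when $o$ is a node of $\Delta_\pi$. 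This already gives assertion (ii) in the case $o\notin\Sing(\Delta_\pi)$, because a $1$-parameter family of conics degenerating simply produces no singularity of the total space, while a node of the discriminant produces precisely the configuration of Example~\xref{ex:ODP} (type \typec{IF_1}).

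Next I would treat the possibility that $S$ itself is singular at $o$. By Theorem~\xref{thm:base-surface} the singularity of $S$ at $o$ is Du Val of type \type{A_{k-1}} for some $k\ge 2$. In that situation the $\QQ$-conic bundle germ cannot be Gorenstein with isolated hypersurface singularities of type \typeA{n}: indeed, comparing with Examples~\xref{ex:T}, \xref{ex:k2A}, \xref{ex:ID}, whenever $S$ is singular the total space $Y$ carries either non-Gorenstein cyclic quotient terminal points or a moderate non-Gorenstein singularity, none of which is a node or a cusp. More precisely, one observes that $-K_Y\cdot C$ is a fractional number $2/r$ or $1/r$ in those cases, which is incompatible with $Y$ Gorenstein along the central fibre. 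Hence for a germ of type \typeGor{n} the base must be smooth at $o$, and we are reduced to the previous paragraph, where $\Sing(\Delta_\pi)$ at $o$ is at worst a node, i.e. $o\in\Delta_\pi$ is a double point — this is the second half of assertion (ii).

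For assertion (i), the bound $n\le 2$, I would argue as follows. Over a smooth base point $o$ the central fibre $C=\pi^{-1}(o)_{\mathrm{red}}$ is a plane conic, which degenerates to a pair of lines $\ell_1+\ell_2$ (possibly coincident). A node or cusp of $Y$ on the central fibre must occur at a point lying on $C$; since $\uprho(Y/S)=1$ and $Y$ embeds in a $\PP^2$-bundle with fibres plane conics, the singular points on the central fibre are exactly the singular points of the total space of the one-parameter degeneration restricted to the normal slice. When $\Delta_\pi$ is smooth at $o$, a generic pencil section gives a surface with an isolated rational double point over each singular point of $Y$ lying on the central fibre, and the adjunction/discriminant count forces at most one such point (a single $A_1$ arising as in type \typec{IF_1}); a cusp cannot occur over a smooth point of $\Delta_\pi$ by the normal-crossing description of the discriminant. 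When $o$ is a double point of $\Delta_\pi$, locally $\Delta_\pi=\{uv=0\}$ and the conic $y_1^2+y_2^2+uvy_3^2$ degenerates over each branch; the total space then has at most two nodes (one on each of the two lines of the reducible central fibre), or a single cusp when the two branches of the discriminant come together — in every case $n\le 2$. The main obstacle I anticipate is making the last count fully rigorous: one must rule out, for a Gorenstein germ, configurations with three or more nodes on the central fibre, which requires either a careful local analysis of the defining equation $q(y_1,y_2,y_3;u,v)=0$ of the embedded conic bundle near the central fibre (showing that the quadratic form in $y$ can drop rank at most by one along each branch of $\Delta_\pi$ passing through $o$), or a discrepancy/length computation comparing $-K_Y\cdot C$ with the number and type of the singular points via the standard formulas; I would favour the explicit equation approach, since by Proposition~\xref{prop-rem:lc} and the normal-crossing property of $\Delta_\pi$ the local model is essentially forced once we know $(S,\Delta_\pi)$ is lc at $o$, which follows here because $S$ is smooth and $\Delta_\pi$ has a node at worst.
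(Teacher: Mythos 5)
Your overall strategy (reduce to a smooth base point, embed the germ as a conic in $\PP^2\times S$, and analyze the local equation) is the same as the paper's, but two of your justifications do not hold up, and the step you yourself defer is precisely the mathematical content of the lemma. First, the smoothness of $S$ at $o$: your argument ``whenever $S$ is singular, $Y$ carries non-Gorenstein points, as one sees by comparing with Examples~\xref{ex:T}, \xref{ex:k2A}, \xref{ex:ID}'' is not a proof, because those are merely examples; the only classification statement available, Proposition~\xref{prop-rem:lc}, applies exclusively when $(S,\Delta_{\pi})$ is lc, which you do not know. The correct conclusion is true, but it is a theorem (Gorenstein terminal singularities of the total space force a smooth base, cited in the paper from Cutkosky and \cite[Theorem~10.2]{P:G-MMP}), not something deducible from the listed examples. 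Second, and more seriously, your treatment of the case $o\in\Sing(\Delta_{\pi})$ assumes that $\Delta_{\pi}$ is normal crossing at $o$ (``locally $\Delta_\pi=\{uv=0\}$'', ``$\Delta_\pi$ has a node at worst'') and proposes to invoke Proposition~\xref{prop-rem:lc}. Neither is legitimate here: the normal crossing property of the discriminant is established only for \emph{standard} conic bundles (smooth total space), and when $Y$ has a cusp the discriminant can have a non-nodal double point --- e.g. $x_0^2+x_1^2+(u^2+v^3)x_2^2=0$ is a germ of type \typeGor{1} whose singular point is a cusp and whose discriminant $u^2+v^3=0$ is cuspidal; the pair $(S,\Delta_\pi)$ is then not lc, so Proposition~\xref{prop-rem:lc} does not apply (and indeed this germ is not of type \typec{IF_1}, \typec{T}, \typec{k2A} or \typec{ID_1^\vee}). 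Assuming ``a node at worst'' is also circular, since the double-point assertion is part of what you must prove.

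Finally, the bound $n\le 2$ and the double-point statement in the case where the quadratic form drops to rank $1$ at $o$ (central fiber a double line) are exactly what you leave open in your last sentence, and they are where the paper does the actual work: writing the germ as $x_0^2+\phi_{1,1}x_1^2+2\phi_{1,2}x_1x_2+\phi_{2,2}x_2^2=0$ with $\phi_{1,1}(0,0)=\phi_{1,2}(0,0)=\phi_{2,2}(0,0)=0$, one observes that the singular points of $Y$ on $\pi^{-1}(o)$ are cut out by two binary quadratic forms in $(x_1{:}x_2)$, so isolatedness of $\Sing(Y)$ gives $n\le 2$; and if $\mult_0\bigl(\phi_{1,2}^2-\phi_{1,1}\phi_{2,2}\bigr)>2$, a short computation (forcing $\mult_0\phi_{1,2}\ge 2$ and the quadratic part of $\phi_{2,2}$ to have rank $2$) shows the singular point would be of type \typeA{m} with $m\ge 3$, contradicting the node/cusp hypothesis. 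In the rank-$2$ case the same hypothesis directly bounds $\mult_0$ of the discriminant by $2$ and shows $Y$ is smooth along the fiber when $\Delta_\pi$ is smooth at $o$. Without this explicit analysis (or an equivalent substitute), your proposal establishes neither (i) nor the second half of (ii).
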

\begin{proof}
The assertions are trivial if $Y$ is smooth, so we assume that $Y$ is singular.
Since the singularities of $Y$ are Gorenstein terminal, the base $S$ is smooth (see e.g. \cite{Cutkosky:contr} or \cite[Theorem~10.2]{P:G-MMP}).
There exists an embedding
$Y\subset \PP^2 \times S$ such that the equation of $Y$ can be written as 
follows:
\begin{equation}
\label{eq:eq:conic}
\sum_{0\le i,\, j\le 2} \phi_{i,j}(u,v) x_ix_j=0,\quad \phi_{i,j}(u,v)\in 
\OOO_{o,S},\quad \phi_{i,j}(u,v)=\phi_{j,i}(v,u).
\end{equation}
The fiber $Y_o$ is a degenerate conic 
and so $1\le \rk \|\phi_{i,j}(0,0)\|\le 2$.
Thus we have the following possibilities.

\subsubsection*{Case: $\rk \|\phi_{i,j}(0,0)\|=2$.}
Then by a coordinate change, which is linear in $x_0, x_1, x_2$, we can reduce 
\eqref{eq:eq:conic} to a diagonal form 
\[
x_0^2+x_1^2+\phi(u,v) x_2^2=0
\]
and then the equation of $\Delta_{\pi}$ is $\phi=0$. 
The point $P:=(0,0,1;0,0)$ is a unique singularity of~$Y$, hence $n=1$.
Since $P\in Y$ is of type~\typeA{m}, we have $\mult_0(\phi)\le 2$. 

\subsubsection*{Case: $\rk \|\phi_{i,j}(0,0)\|=1$.}
As above we can reduce \eqref{eq:eq:conic} to the following form
\begin{equation*}
x_0^2+\phi_{1,1}(u,v) x_1^2+2\phi_{1,2}(u,v) x_1x_2+\phi_{2,2}(u,v)x_2^2=0, 
\end{equation*}
where $\phi_{1,1}(0,0)=\phi_{1,2}(0,0)=\phi_{2,2}(0,0)=0$. 
The singularities of $Y$ on $\pi^{-1}(o)$
are given by 
\[\textstyle
x_0=u=v=x_1^2\frac{\partial \phi_{1,1}}{\partial u} +2x_1x_2\frac{\partial \phi_{1,2}}{\partial u}+
x_2^2\frac{\partial \phi_{2,2}}{\partial u}
= x_1^2\frac{\partial\phi_{1,1}}{\partial u} +2x_1x_2\frac{\partial \phi_{1,2}}{\partial u}
+x_2^2\frac{\partial \phi_{2,2}}{\partial v}=0.
\]
Since the singularities of $Y$ are isolated, this implies that $n\le 2$.

The discriminant curve $\Delta_{\pi}$ is given by the equation $\phi_{1,2}^2-\phi_{1,1}\phi_{2,2}=0$.
Assume that $\mult_0 (\phi_{1,2}^2-\phi_{1,1}\phi_{2,2})> 2$. 
By a linear coordinate change we also may assume that the point $P:=(0,0,1;0,0)$ is singular and its local equation is 
\[
x_0^2+\phi_{1,1}(u,v) x_1^2+2\phi_{1,2}(u,v) x_1+\phi_{2,2}(u,v)=0,
\]
where $\mult_0 \phi_{2,2}\ge 2$.
Then $\mult_0 (\phi_{1,2})\ge 2$, hence the rank of the quadratic part
of $\phi_{2,2}$ equals~$2$. Then by an analytic coordinate change of $u$ and $v$ we may assume that $\phi_{2,2}=u^2+v^2$.
Since $\phi_{1,1}(0)=0$ and $\mult_0 (\phi_{1,2})\ge 2$, we see that $P\in Y$ is of type~\typeA{n} with $n\ge 3$, a contradiction.
\end{proof}

The following fact is a direct consequence of \cite[Theorem~1.2]{MP:cb1} and 
\cite[Theorem~1.3]{MP:cb2}.
\begin{proposition}
\label{prop:cb-index2}
Let $\pi:Y\to S\ni o$ be a $\QQ$-conic bundle germ, where $S\ni o$ is a 
singularity of type \type{A_1}.
If the singularities of $Y$ are moderate of indices $\le 2$, then $\pi$ is biholomorphically equivalent to a $\QQ$-conic bundle germ 
described in Examples \xref{ex:T} or \xref{ex:ID}. 
\end{proposition}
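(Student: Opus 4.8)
The assertion is obtained by specializing the classification of non-Gorenstein $\QQ$-conic bundle germs to the case where the base point is of type \type{A_1}. First observe that since $S\ni o$ is singular, $Y$ cannot be Gorenstein along $\pi^{-1}(o)$: a Gorenstein terminal $\QQ$-conic bundle germ has smooth base (as recalled in the proof of Lemma~\ref{lemma:new}). Hence $Y$ has a non-Gorenstein point on $\pi^{-1}(o)$, and by hypothesis every singular point of $Y$ is moderate of index $\le 2$, so it is either a cyclic quotient $\frac{1}{2}(1,1,1)$ or a point $\{xy+z^2+t^m=0\}/\mumu_2(1,-1,1,0)$ with axial weight $m\ge 2$. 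In particular $Y$ has neither nodes nor cusps, these being \type{cA} points in which the monomial $z$ does not occur, so that they fail to be moderate.

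Now apply \cite[Theorem~1.2]{MP:cb1} together with \cite[Theorem~1.3]{MP:cb2}, which classify all $\QQ$-conic bundle germs with non-Gorenstein total space. Extracting from this list the germs whose base point $o\in S$ is of type \type{A_1}, one finds exactly the following possibilities: either $\pi$ is of type \typeci{T}{2}, so that $Y$ carries two cyclic quotient points $\frac{1}{2}(1,1,1)$, $\Delta_{\pi}=\varnothing$, and $\pi$ coincides with Example~\ref{ex:T} for $r=2$; or $\pi$ is of type \typec{ID_1^\vee}, so that $Y$ carries a single moderate \type{cA/2} point of axial weight $2$ and $\pi$ is Example~\ref{ex:ID}; or else $Y$ has a singular point which is of index $\ge 3$ (for instance a cyclic quotient of index $4$ or a point of type \type{cAx/4}), or of type \type{cAx/2}, or of type \type{cA/2} in which the monomial $z^2$ is absent. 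In the last group the point is not moderate of index $\le 2$, which contradicts the hypothesis, so these germs do not occur; hence $\pi$ is biholomorphic to Example~\ref{ex:T} (necessarily with $r=2$) or to Example~\ref{ex:ID}, as claimed.

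The only part of the argument that goes beyond bookkeeping with the tables of \cite{MP:cb1} and \cite{MP:cb2} is the fact that no $\QQ$-conic bundle germ over a point of type \type{A_1} has total space carrying a single moderate \type{cA/2} point of axial weight $m\ge 3$: such a germ would satisfy our hypothesis yet agree with neither Example~\ref{ex:T} nor Example~\ref{ex:ID}. The classification forces $m=2$ in that situation, and forces the remaining index-$2$ cases to be non-moderate (types \type{cA/2} without a $z^2$ term, or \type{cAx/2}); this is exactly the content of \cite[Theorem~1.3]{MP:cb2} that does the work, everything else being formal. One could instead first check that the pair $(S,\Delta_{\pi})$ is log canonical at $o$ and then invoke Proposition~\ref{prop-rem:lc}, but verifying that log canonicity relies on the same classification input, so this route is not actually shorter.
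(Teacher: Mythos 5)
Your proposal is correct and follows essentially the same route as the paper, which states the proposition as a direct consequence of \cite[Theorem~1.2]{MP:cb1} and \cite[Theorem~1.3]{MP:cb2}; your additional bookkeeping (non-Gorensteinness of $Y$ from singularity of the base, and exclusion of the non-moderate index-$2$ types) is just an explicit unpacking of that citation.
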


\begin{theorem}[cf. {\cite[Sect. 11]{P:rat-cb:e}}]
\label{thm:SL-CB}
Let $\pi: Y\to S\ni o$ be a $\QQ$-conic bundle germ of one of the types \typeGor{n}, 
\typeci{T}{r}, \typeci{k2A}{r}, or \typec{ID_1^\vee}.
Let $P\in Y$ be a singular point, let $r$ be its index, and let $p: \check Y\to Y$ be 
an extremal blowup 
of $P$ with discrepancy $1/r$.
Then $p$ can be completed to a type \typem{I} Sarkisov link 
\begin{equation}
\label{eq:SL-CB}
\vcenter{
\xymatrix@R=1em{
\check{Y}\ar[d]_{p}\ar@{-->}[r]^{\chi}& Y'\ar[d]^{\pi'}
\\
Y\ar[d]_{\pi} & S'\ar[ld]_{\alpha}
\\
S
} }
\end{equation}
where $\chi$ is a birational transformation that is isomorphism in codimension 
one, 
$\pi'$ is a $\QQ$-conic bundle, and $\alpha$ is a crepant contraction with 
$\uprho(S'/S)=1$ in the cases \typec{T}, \typec{k2A}, 
\typec{ID_1^\vee},
and $\alpha$ is the blowup of $o$ in the case \typeGor{n}.
\begin{enumerate}
\item 
\label{thm:SL-CB:o}
If $\pi: Y\to S\ni o$ is of type \typeGor{1}, then $Y'$ is smooth
and $\Delta_{\pi'}$ is the proper transform of $\Delta_{\pi}$.

\item 
\label{thm:SL-CB:o1}
If $\pi: Y\to S\ni o$ is of type \typeGor{2}, then $\pi': 
Y'\to S'$ is a Gorenstein conic bundle whose unique singular fiber is of type \typeGor{1}
and $\Delta_{\pi'}$ is the proper transform of $\Delta_{\pi}$.

\item 
\label{thm:SL-CB:a}
If $\pi: Y\to S\ni o$ is of type \typeci{T}{r}, then $\pi': Y'\to S'$ has two 
\textup(or 
one\textup) singular fibers
and corresponding $\QQ$-conic bundle germs are of type \typeci{T}{a} and 
\typeci{T}{r-a}.
\item 
\label{thm:SL-CB:b}
If $\pi: Y\to S\ni o$ is of type \typeci{k2A}{r}, then $\pi': Y'\to S'$ has two 
\textup(or 
one\textup) singular fibers
and corresponding $\QQ$-conic bundle germs are of type \typeci{k2A}{r-2} and 
of type \typec{ID_1^\vee}.
In this case $\Delta_{\pi'}=\alpha^*\Delta_{\pi}$.
\item 
\label{thm:SL-CB:c}
If $\pi: Y\to S\ni o$ is of type \typec{ID_1^\vee}, then $Y'$
is smooth and $\Delta_{\pi'}=\alpha^*\Delta_{\pi}$.
\end{enumerate}
In all cases we have
\begin{equation}
\label{eq:log-crepant}
\textstyle
K_{S'}+\frac12 \Delta_{\pi'} = \alpha^*\left(K_S+ \frac12 \Delta_{\pi}\right).
\end{equation} 
\end{theorem}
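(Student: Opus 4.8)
The plan is to construct the link~\eqref{eq:SL-CB} by the standard two-ray game starting from the extremal blowup $p\colon\check Y\to Y$, and then identify the output in each of the five cases by a local computation on the explicit models of Examples~\xref{ex:ODP}--\xref{ex:ID} (together with the ``$(\mathrm{NC}_n)$'' model from the proof of Lemma~\xref{lemma:new} and the index-raising statement Lemma~\xref{lemma:moderate-blowup}). First I would note that since $\uprho(\check Y/S)=2$ and $-K_{\check Y}$ is ample over $S$ (because $-K_Y$ is $\pi$-ample and the blowup has positive discrepancy), the cone $\overline{NE}(\check Y/S)$ is two-dimensional, spanned by the $p$-exceptional ray $R_p$ and a second extremal ray $R$. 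Running the $K_{\check Y}$-MMP over $S$ on the other side: the contraction of $R$ is either a divisorial or flipping contraction (never a fiber type, since $-K_{\check Y}$ is $\pi$-ample so $\check Y\to S$ is not itself Mori-fiber after contracting only part of the fiber), so after finitely many flips we reach $Y'\dashrightarrow$ with $\check Y\dashrightarrow Y'$ an isomorphism in codimension one, and a final extremal contraction $Y'\to S'$. By boundedness of the local picture (all the $\QQ$-conic bundle germs in question have $\pi^{-1}(o)$ irreducible or a union of two lines), this last contraction is again a $\QQ$-conic bundle $\pi'\colon Y'\to S'$, and the induced $\alpha\colon S'\to S$ has $\uprho(S'/S)=1$; it is crepant in the cyclic-quotient cases by the discrepancy bookkeeping below and is the blowup of $o$ in the Gorenstein cases since then $S$ is smooth at $o$ (Lemma~\xref{lemma:new}) and $\alpha$ must be a birational morphism of smooth surfaces with $\uprho=1$.

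Next I would verify~\eqref{eq:log-crepant}. In the cyclic-quotient cases \typec{T}, \typec{k2A}, \typec{ID_1^\vee} one has $\alpha$ crepant, i.e.\ $K_{S'}=\alpha^*K_S$, and the assertion $\tfrac12\Delta_{\pi'}=\alpha^*(\tfrac12\Delta_\pi)$ is exactly the content of the pull-back formulas recorded in the statement ($\Delta_{\pi'}=\alpha^*\Delta_\pi$ in types \typec{k2A}, \typec{ID_1^\vee}, and $\Delta_{\pi'}=\varnothing=\alpha^*\Delta_\pi$ in type \typec{T}). In the Gorenstein case $\alpha$ is the blowup of a smooth point $o$ of $S$, so $K_{S'}=\alpha^*K_S+E_\alpha$; since $\Delta_\pi$ has multiplicity $\le 2$ at $o$ (Lemma~\xref{lemma:new}) and $\Delta_{\pi'}$ is its proper transform, we get $\Delta_{\pi'}=\alpha^*\Delta_\pi-(\mult_o\Delta_\pi)E_\alpha$; combining these, $K_{S'}+\tfrac12\Delta_{\pi'}=\alpha^*(K_S+\tfrac12\Delta_\pi)+(1-\tfrac12\mult_o\Delta_\pi)E_\alpha$, which is $\alpha^*(K_S+\tfrac12\Delta_\pi)$ precisely when $\mult_o\Delta_\pi=2$. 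So I would check, using the explicit local equations, that in the singular Gorenstein germs (types \typeGor{1} with $o\in\Sing\Delta_\pi$ and \typeGor{2}) one indeed has $\mult_o\Delta_\pi=2$, and treat the smooth case \typeGor{1} with $o\notin\Sing\Delta_\pi$ trivially (there $\alpha$ is the blowup of a smooth point off $\Delta_\pi$, $\mult_o\Delta_\pi=0$, and then $\Delta_{\pi'}=\alpha^*\Delta_\pi$ automatically, consistently with the formula once one notes this germ is actually the pullback of a smooth conic-bundle germ).

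The bulk of the work, and the main obstacle, is the case-by-case identification of $\pi'\colon Y'\to S'$ and of the singular fibers claimed in \ref{thm:SL-CB:o}--\ref{thm:SL-CB:c}. Here I would blow up $P$ explicitly on each model: for type \typeci{T}{r} and \typeci{k2A}{r} the singularity $P\in Y$ is a cyclic quotient of index $r$ and the weighted blowup with weights realizing discrepancy $1/r$ splits the central fiber, so the two-ray game contracts the two halves separately over the two branches of the resolved base, producing the two sub-germs of types \typeci{T}{a},\typeci{T}{r-a} (resp.\ \typeci{k2A}{r-2} and \typec{ID_1^\vee}); for type \typec{ID_1^\vee} the point $P$ is the moderate index-$2$ point of axial weight $2$, and Corollary~\xref{cor:moderate-blowup1} shows its blowup introduces no new singularity, hence $Y'$ is smooth; for the Gorenstein germs the point $P$ is a node or cusp and Remark~\xref{rem:cAn} describes its blowup (no singularity if $n\le 2$, a $\mathrm{cA}_{n-2}$ point otherwise), from which the reduction \typeGor{2}$\rightsquigarrow$\typeGor{1}$\rightsquigarrow$smooth follows, with $\Delta_{\pi'}$ the proper transform because blowing up a point of $Y$ lying over a smooth point of $S$ does not change the base. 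In each case I would confirm that the resulting $\pi'$ satisfies $-K_{Y'}$ $\pi'$-ample and $\uprho(Y'/S')=1$, so that it is genuinely a $\QQ$-conic bundle of the advertised type, and that $\chi$ is an isomorphism in codimension one (the intermediate contractions on the $R$-side are flips or a single divisorial contraction to a point, never contracting a divisor to a curve, as one sees from the explicit geometry). The delicate points are: (i) ensuring the two-ray game on the $R$-side does not run into a fiber-type or divisor-to-curve contraction, which I would rule out using $-K_{\check Y}$ being $\pi$-ample and the one-dimensionality of fibers; and (ii) checking, in types \typec{T} and \typec{k2A}, that $\alpha$ is the crepant partial resolution of the $A_{r-1}$ base singularity with $\uprho(S'/S)=1$ rather than a full resolution — this follows because $\check Y$ has only the remaining terminal quotient singularities forced by Lemma~\xref{lemma:moderate-blowup}/the explicit models, pinning down which $(-2)$-curve(s) on the minimal resolution of $S$ get contracted by $\pi'$.
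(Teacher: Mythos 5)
Your overall strategy---run the two-ray game over $S$ starting from $p$ and identify the output on explicit local models---is in spirit the same as the paper's, which delegates the game to \cite[Construction~11.1]{P:rat-cb:e} and the cases \typec{T}, \typec{k2A}, \typeGor{n} to Examples~11.5, 11.6, 11.8 there and to \cite[Lemma~8]{Avilov:cb}, working out only the case \typec{ID_1^\vee} in detail. But the two points you gloss over are exactly where real work is needed, and your treatment of them is wrong or missing. First, in the case \typec{ID_1^\vee} you claim that Corollary~\xref{cor:moderate-blowup1} shows the blowup ``introduces no new singularity, hence $Y'$ is smooth''. That corollary says the opposite: $\check Y$ has a (unique) singular point $Q$ of type $\frac12(1,1,1)$. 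Smoothness of $Y'$ is not a consequence of the blowup alone: one must check that the proper transform $\check C$ of the central fiber passes through $Q$ with $E\cdot\check C=1$, compute $K_{\check Y}\cdot\check C=-\frac12$ so that $\check C$ is a flipping curve, and then invoke the classification of flips \cite[Theorem~4.2]{KM:92} to conclude that $\chi$ is a single flip and $Y'$ is smooth. This flip analysis is entirely absent from your plan.

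Second, the discriminant assertions are not proved. For \typec{k2A} and \typec{ID_1^\vee} the claim $\Delta_{\pi'}=\alpha^*\Delta_{\pi}$ says in particular that the $\alpha$-exceptional curve is contained in $\Delta_{\pi'}$; you take this ``from the statement'' and then feed it into your verification of \eqref{eq:log-crepant}, which is circular. The paper proves it: the analytic components $\Delta_1,\Delta_2$ of $\Delta_{\pi}$ have irreducible preimages, hence so do their proper transforms, hence the fibers over the points $\Delta_i'\cap\alpha^{-1}(o)$ are double lines and $\alpha^{-1}(o)\subset\Delta_{\pi'}$. Symmetrically, in the cases \typeGor{n} your justification that $\Delta_{\pi'}$ is the proper transform (``blowing up a point of $Y$ lying over a smooth point of $S$ does not change the base'') is a non sequitur: the base does change ($\alpha$ is the blowup of $o$), and the actual content is that the $\alpha$-exceptional line is \emph{not} contained in $\Delta_{\pi'}$, for which the paper cites \cite[Lemma~10.10]{P:rat-cb:e}. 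Without these two facts the formula \eqref{eq:log-crepant}---the part of the theorem used later---is not established. (Your reduction of \eqref{eq:log-crepant} to $\mult_o\Delta_{\pi}=2$ in the Gorenstein case is fine, since Lemma~\xref{lemma:new} provides the double point, and your identification of $\alpha$ as the blowup of $o$ there is also correct.)
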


\begin{proof}
Apply \cite[Construction~11.1]{P:rat-cb:e}.
In the cases \typeci{T}{r} and \typeci{k2A}{r} the assertion follows from 
Examples~11.5 and~11.6 in \cite{P:rat-cb:e}.
The case \typeGor{n} is similar to \cite[Lemma~8]{Avilov:cb} (see also 
\cite[Example~11.8]{P:rat-cb:e}). In this case $p$ is the blowup of the maximal ideal of 
the \typeA{n_1}-point (see Remark~\ref{rem:cAn}), $\chi$ is a flop, and $\Delta_{\pi'}$ does not contain 
$\alpha$-exceptional divisor by \cite[Lemma~10.10]{P:rat-cb:e}.

Consider the case \typec{ID_1^\vee}. Then $P\in Y$ is the only singularity of 
$Y$
and it is moderate with $\aw(Y,P)=2$. By Lemma~\ref{lemma:moderate-blowup} the 
variety
$\check{Y}$ has a unique singularity, say $Q$, which is of type $\frac 12(1,1,1)$.
Let $E$ be the $p$-exceptional divisor, let $C:=\pi^{-1}(o)_{\red}$ be the 
central fiber and let $\check{C}\subset \check{Y}$ be its proper transform.
Local computations show that $\check{C}$ passes through $Q$ and $E\cdot \check{C}=1$.
Then as in \cite[(11.1.2)]{P:rat-cb:e} we have 
\[\textstyle
K_{\check{Y}}\cdot \check{C}=K_Y\cdot C+\frac 
12 
E\cdot \check{C}=-\frac 12.
\]
Hence $\check{C}$ is a flipping curve and the link \eqref{eq:SL-CB} exists by 
\cite[Construction~11.1]{P:rat-cb:e}.
Here $\alpha$ is a crepant contraction and so $S'$ is smooth.
Moreover, by \cite[Theorem~4.2]{KM:92} the variety $Y'$ is smooth and $\chi: 
\check{Y}\dashrightarrow Y'$ is a single flip.
It remains to show that $\Delta_{\pi'}=\alpha^*\Delta_{\pi}$.
Let $\Delta_1$, $\Delta_2$ be analytic 
components of $\Delta_{\pi}$
and let $\Delta_1'$, $\Delta_2'$ be their proper transforms on $S'$.
These curve are smooth and $\Delta_1'\cap\Delta_2'=\varnothing$ (because the 
pair $(S,\Delta_\pi)$ is lc).
Moreover, the inverse images $\pi^{-1}(\Delta_i)$ are irreducible.
Hence the same holds for $\pi'^{-1}(\Delta_i')$. This implies that the fibers 
over the intersection points 
$\Delta_i'\cap\alpha^{-1}(o)$ are double lines and so $\alpha^{-1}(o)\subset 
\Delta_{\pi'}$
(see e.g. \cite[\S~3.3]{P:rat-cb:e}).
\end{proof}

\section{Hypersurface $X_6\subset \PP(1^2,2^2,3)$}

\begin{assumption}
\label{ass:3}
Throughout this section $X$ denotes
a hypersurface of degree $6$ in $\PP(1^2,2^2,3)$.
We assume that the singularities of $X$ 
are terminal.
\end{assumption}

\begin{proposition}
\label{setup:q=3}
In some coordinate system $x_1,y_1,x_2,y_2,x_3$ the equation of $X$ has
one of the following forms:
\begin{align}
\label{eq:q=3o:eq3}
&& x_3^2+x_2y_2(x_2+y_2)+
x_2^2\phi_2 +
x_2y_2\phi_2'+
y_2^2\phi_2''+
x_2\phi_4+
y_2\phi_4'+\phi_6=0,
\\
\label{eq:q=3o:eq2}
&& x_3^2+x_2^2y_2+
y_2^2\phi_2''+
x_2\phi_4+
y_2\phi_4'+\phi_6=0,
\\
\label{eq:q=3o:eq1}
&& x_3^2+x_2^3+
x_2y_2\phi_2'+
y_2^2\phi_2''+
x_2\phi_4+
y_2\phi_4'+\phi_6=0,
\end{align}
where
$\phi_d$, $\phi_d'$, $\phi_d''$
are homogeneous polynomials in $x_1,y_1$ and the subscript is the degree.
Moreover, these polynomials and the set $\Singng(X)$ of non-Gorenstein points 
satisfy the following conditions:
\begin{center}
\begin{tabularx}{0.99\textwidth}{l|l|X}
equation&conditions&$\Singng(X)$
\\\hline
\eqref{eq:q=3o:eq3} & & $3\times \frac12(1,1,1)$
\\
\eqref{eq:q=3o:eq2}& $\phi_2''\neq 0$ & $\frac12(1,1,1)$ $+$ \type{cA/2},\quad 
moderate with $\aw=2$ iff $\rk\phi_2''=2$
\\
\eqref{eq:q=3o:eq2}&$\phi_2''=0$ & $\frac12(1,1,1)$ $+$ \type{cAx/2}
\\
\eqref{eq:q=3o:eq1}&$\phi_2''\neq 0$ & \type{cA/2},\quad moderate with $\aw=3$ 
iff 
$\rk\phi_2''=2$ 
\\
\eqref{eq:q=3o:eq1}&$\phi_2''=0$, $\phi_2'\neq 0$ & \type{cD/2}
\\
\eqref{eq:q=3o:eq1}&$\phi_2''=0$, $\phi_2'= 0$, $\phi_4'\neq 0$ & \type{cE/2}
\\\hline
\end{tabularx} 
\end{center}
\end{proposition}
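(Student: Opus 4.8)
The plan is to start from the general form of a degree-$6$ equation in $\PP(1^2,2^2,3)$ and progressively normalize it by weighted coordinate changes, keeping track of the constraints imposed by terminality. The ambient weighted projective space has a unique coordinate $x_3$ of weight $3$, so the equation must contain $x_3^2$ with a nonzero coefficient: otherwise $x_3=0$ on all of $X$, and $X$ would be a cone over a curve, contradicting that $X$ is a (terminal, hence isolated-singularity) threefold — more precisely, the point $P_{x_3}=(0{:}0{:}0{:}0{:}1)$ would be worse than terminal, or $X$ would be non-normal. After rescaling we may assume the $x_3^2$ term has coefficient $1$, and by completing the square in $x_3$ (adding to $x_3$ a suitable weight-$3$ polynomial, i.e.\ a linear combination of $x_1^3, x_1^2y_1,\dots, x_ix_2$-type terms — but note weight-$3$ monomials in $x_1,y_1,x_2,y_2$ are $x_1^3,\dots,y_1^3$ and $x_1x_2,x_1y_2,y_1x_2,y_1y_2$, none of which is $x_3$), we eliminate all cross terms linear in $x_3$. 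Thus the equation becomes $x_3^2 + f(x_1,y_1,x_2,y_2)=0$, where $f$ is weighted-homogeneous of degree $6$ in the remaining variables.

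Next I would analyze the weight-$6$ polynomial $f$ in the variables $x_1,y_1$ (weight $1$) and $x_2,y_2$ (weight $2$). Organizing $f$ by its degree in $(x_2,y_2)$, we get $f = q_3(x_2,y_2) + q_2(x_2,y_2)\cdot(\text{wt }2) + q_1(x_2,y_2)\cdot(\text{wt }4) + (\text{wt }6\text{ in }x_1,y_1)$, i.e.\ exactly the shape displayed in \eqref{eq:q=3o:eq3}–\eqref{eq:q=3o:eq1} once one writes $q_3$ as a binary cubic form in $x_2,y_2$. The three cases of the proposition correspond to the three possibilities for this binary cubic $q_3(x_2,y_2)$ up to $\mathrm{GL}_2$ acting on $(x_2,y_2)$ (together with weight-preserving additions of $x_1^2,x_1y_1,y_1^2$ to $x_2,y_2$, which I must check do not interfere, since those shifts change the lower-order terms but not the leading cubic): a binary cubic is either (i) a product of three distinct linear forms, reducible to $x_2y_2(x_2+y_2)$, giving \eqref{eq:q=3o:eq3}; or (ii) a linear form squared times a distinct linear form, reducible to $x_2^2y_2$, giving \eqref{eq:q=3o:eq2}; or (iii) a linear form cubed, reducible to $x_2^3$, giving \eqref{eq:q=3o:eq1}; or (iv) zero, which I claim is impossible for terminality — if $q_3\equiv 0$ then the point $P_{x_2}=(0{:}\cdots{:}1{:}0{:}0)$ (or a point on the line $x_1=y_1=x_3=0$) is non-isolated-singular or non-terminal, and similarly one must rule out degenerate subcases within (ii) and (iii) where the coefficient of $x_2^2$ resp.\ the distinct factor degenerates.

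The substantive part is the last table: determining $\Singng(X)$ and classifying the non-Gorenstein and Gorenstein singularities in each case. The non-Gorenstein points of a hypersurface in $\PP(1^2,2^2,3)$ can only lie over the singular locus of the ambient space, which (away from lower strata) is the locus $x_1=y_1=x_3=0$, i.e.\ the $\PP(2,2)\cong\PP^1$ with coordinates $(x_2{:}y_2)$, along which $\PP(1^2,2^2,3)$ has transverse $\frac12(1,1,1)$-type quotient behavior. So the candidate non-Gorenstein points of $X$ are the points of $X\cap\{x_1=y_1=x_3=0\}$, which are exactly the roots of the binary cubic $q_3(x_2,y_2)=0$ in $(x_2{:}y_2)$. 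For \eqref{eq:q=3o:eq3} these are three distinct points, each a $\frac12(1,1,1)$ (since at each root the transverse equation is, after the square-root, smooth), giving $3\times\frac12(1,1,1)$. For \eqref{eq:q=3o:eq2} the double root $(x_2{:}y_2)=(0{:}1)$ gives a point where the quotient singularity is worse: writing the local equation there in the affine chart and using the $\mumu_2$-quotient description, one reads off a $\text{cA}/2$ point — and I would compute the axial weight by expanding $x_3^2 + x_2^2 y_2 + y_2^2\phi_2'' + \cdots$ in local coordinates, finding that the term $z^2$ (in the notation of \eqref{eq:cA/r}) is present precisely when $\phi_2''\ne 0$, and then $\rk\phi_2''=2$ versus $\rk\phi_2''=1$ distinguishes $\aw=2$ (moderate) from $\text{cAx}/2$; the simple root $(0{:}1)$... wait, rather the simple root of $x_2^2y_2$ is $(x_2{:}y_2)=(1{:}0)$, but there $x_2\ne 0$ so this point is \emph{not} on $\{x_1=y_1=x_3=0\}$ in the weighted sense — one must check it is Gorenstein, which it is. For \eqref{eq:q=3o:eq1} the unique (triple) root $(0{:}1)$ is the only candidate, and a local analytic computation at that $\mumu_2$-quotient point identifies the type as $\text{cA}/2$ (moderate, $\aw=3$, when $\rk\phi_2''=2$), $\text{cD}/2$ (when $\phi_2''=0$, $\phi_2'\ne0$), or $\text{cE}/2$ (when $\phi_2''=\phi_2'=0$, $\phi_4'\ne0$), by matching the local equation against the normal forms of $\text{cD}/2$ and $\text{cE}/2$ singularities (see \cite{Reid:YPG}, \cite{Mori:term-sing}); here terminality forces $\phi_4'\ne 0$ in the last branch, since otherwise the point is not terminal (or not isolated). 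The main obstacle I anticipate is exactly this local analysis at the index-$2$ point in case \eqref{eq:q=3o:eq1}: one has to carefully pass to the $\mumu_2$-quotient chart, perform an analytic coordinate change to bring the equation to one of the Reid/Mori normal forms for $\text{cA}/2$, $\text{cD}/2$, $\text{cE}/2$, and read off invariants ($\aw$, the $\mathrm{cD}$ vs.\ $\mathrm{cE}$ distinction), while simultaneously verifying that in each branch the terminality hypothesis is equivalent to the stated non-degeneracy of the relevant $\phi$. The cases \eqref{eq:q=3o:eq3} and \eqref{eq:q=3o:eq2} are comparatively routine once \eqref{eq:q=3o:eq1} is understood.
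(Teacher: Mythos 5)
Your overall skeleton is the same as the paper's: terminality forces the monomial $x_3^2$ (the index-$3$ point $(0{:}0{:}0{:}0{:}1)$ would otherwise lie on $X$ and cannot be terminal), one completes the square in $x_3$, the non-Gorenstein locus is $X\cap\{x_1=y_1=x_3=0\}$ so the binary cubic $q_3(x_2,y_2)$ is nonzero, the three normal forms correspond to its factorization type, and the table is read off from the Mori--Reid classification of index-$2$ terminal points. However, two concrete steps of your local analysis are wrong and, carried out as you describe them, would yield a table contradicting the one being proved. First, in case \eqref{eq:q=3o:eq2} you assert that the simple-root point $(x_2{:}y_2)=(1{:}0)$, i.e. $(0{:}0{:}1{:}0{:}0)$, is ``not on $\{x_1=y_1=x_3=0\}$'' and is Gorenstein. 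It manifestly lies on that locus (the locus is the whole line $\PP(2,2)$ with coordinates $(x_2{:}y_2)$, exactly the locus you yourself used to enumerate candidates), and it is non-Gorenstein: in the chart $x_2\neq0$ the equation $x_3^2+y_2+y_2^2\phi_2''+\dots=0$ has the invariant coordinate $y_2$ as a linear term, so $X$ there is $\CC^3_{x_1,y_1,x_3}/\mumu_2(1,1,1)$, i.e. precisely the $\frac12(1,1,1)$ point listed in the table. With your reading, $\Singng(X)$ in case \eqref{eq:q=3o:eq2} would have one point instead of two.

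Second, your criterion for the \type{cA/2} versus \type{cAx/2} and moderate distinctions is swapped. In the chart $y_2\neq0$ the equation is $x_3^2+x_2^2+\phi_2''(x_1,y_1)+x_2\phi_4+\phi_4'+\phi_6=0$ with $\mumu_2$-weights $(1,1,0,1)$, so the quadratic part is $x_3^2+x_2^2+\phi_2''$. The dichotomy \type{cA/2} versus \type{cAx/2} is governed by $\phi_2''\neq0$ versus $\phi_2''=0$ (one needs a rank-$\ge 2$ form in the anti-invariant variables to produce the $xy$ of \eqref{eq:cA/r}), while moderateness, i.e. the presence of $z^2$ in $\phi$, is governed by $\rk\phi_2''=2$ versus $\rk\phi_2''=1$: when $\rk\phi_2''=1$ an equivariant reduction gives $\{uv+t^2+z^{2k}=0\}/\mumu_2(1,1,1,0)$ with $k\ge2$, a \emph{non-moderate} \type{cA/2} point of axial weight $2$ --- not a \type{cAx/2} point. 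Your statements that ``$z^2$ is present precisely when $\phi_2''\neq0$'' and that ``$\rk\phi_2''=2$ versus $1$ distinguishes moderate from \type{cAx/2}'' conflate these two criteria, and you omit the rank-one subcase altogether in case \eqref{eq:q=3o:eq1}. This is not cosmetic: the moderate/non-moderate split is exactly what Theorem~\ref{thm:main} and Corollary~\ref{cor:n-mod-rat} turn on, and the rank-one subcase is the one used there. (A minor further slip: the absence of $x_3^2$ does not force $x_3=0$ on $X$; the correct argument is the one you give as a fallback, that the index-$3$ point would then be a non-terminal point of $X$.)
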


\begin{proof}
Let $\phi=0$ be an equation of $X$. If $\phi$ does not contain $x_3^2$, then 
the 
point $(0,0,0,0,1)$
is the quotient of a hypersurface singularity by $\mumu_3(1,1,2,2)$. Such a 
point cannot be terminal.
Thus $\phi\ni x_3^2$. Completing the square we may assume that $\phi$ does not 
contain other terms that depend on $x_3$.
Then the set of non-Gorenstein points of $X$ is given by $\phi=x_1=y_1=x_3=0$. 
Hence the cubic form $\phi(0,0,x_2,y_2,0)$
is not identically zero. It may have either three, two or one distinct linear 
factors.
These cases correspond to the equations \eqref{eq:q=3o:eq3}, \eqref{eq:q=3o:eq2}, and 
\eqref{eq:q=3o:eq1}, respectively.
The information in the table follows from the classification of terminal 
singularities, see \cite{Mori:term-sing}, \cite{Reid:YPG},
and \ref{def:mode}.
\end{proof}

Recall that 
the $\QQ$-Fano index of a singular Fano variety is the 
maximal integer $q=\qQ(X)$ such that the relation 
$-K_X\qq q A$ holds for some integral Weil divisor $A$. 
The divisor $A_X$ satisfying the relation $-K_X\qq \qQ(X) A_X$
is called the \emph{fundamental divisor}.

\begin{corollary}
\label{cor:X6}
In the above notation, the following assertions hold:
\begin{enumerate}
\item \label{cor:X6:1}
the singularity basket of $X$ \textup(see \cite{Reid:YPG}\textup) consists of three points of type 
$\frac12(1,1,1)$;
\item \label{cor:X6:2}
$\OOO_X(A_X)=\OOO_X(1)$;
\item \label{cor:X6:3}
$\qQ(X)=3$;
\item \label{cor:X6:4}
$A_X^3=1/2$;
\item 
\label{cor:X6:5}
the group $\Cl(X)$ is torsion free \textup(see \cite[Proposition~2.9]{P:2010:QFano}\textup).
\end{enumerate}
\end{corollary}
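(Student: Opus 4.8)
The plan is to read off everything from Proposition \ref{setup:q=3} and standard weighted-projective bookkeeping. First I would dispose of \ref{cor:X6:1}. In each of the three normal forms \eqref{eq:q=3o:eq3}, \eqref{eq:q=3o:eq2}, \eqref{eq:q=3o:eq1}, the only points of $X$ lying over the singular locus $\{x_1=y_1=0\}$ of the ambient $\PP(1^2,2^2,3)$ that can fail to be Gorenstein are the $\mumu_2$-quotient points with $x_1=y_1=x_3=0$; the table in Proposition \ref{setup:q=3} already records $\Singng(X)$ in each case. However, the singularity \emph{basket} in the sense of \cite{Reid:YPG} is a $\QQ$-smoothing invariant: each terminal point contributes to the basket the collection of cyclic quotient singularities appearing in a local $\QQ$-smoothing, and by \cite[\S 6.4]{Reid:YPG} a point of type \type{cA/2} (resp. \type{cAx/2}, \type{cD/2}, \type{cE/2}) with the relevant axial weight contributes a number of $\frac12(1,1,1)$-points equal to its axial weight. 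A short case check against the table — using that along $\{x_1=y_1=0\}$ the surface section is a cubic in $(x_2,y_2)$ whose total ``weight'' over the $\mumu_2$-fixed locus is $3$ — shows the basket is always $3\times\frac12(1,1,1)$, independently of how these merge into fewer genuine singular points. I expect this is the only step requiring genuine care, since one must correctly invoke the axial-weight count rather than just read off $\Singng(X)$.

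Next, \ref{cor:X6:2} and \ref{cor:X6:4}: on $\PP:=\PP(1^2,2^2,3)$ the class group is $\Cl(\PP)\simeq\ZZ$ generated by $\OOO_\PP(1)$, and $-K_\PP=\OOO_\PP(1+1+2+2+3)=\OOO_\PP(9)$. Since $\deg X=6$ and $6<9$, adjunction gives $-K_X=(-K_\PP-X)|_X=\OOO_X(9-6)=\OOO_X(3)$. Because $X$ has only isolated (hence terminal, hence at worst quotient) singularities and its class group is computed from the ambient one by restriction, $\Cl(X)$ is generated by $\OOO_X(1)$ — there is no smaller Weil divisor class — so $A_X=\OOO_X(1)$, proving \ref{cor:X6:2}. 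For \ref{cor:X6:4}, $A_X^3=\OOO_\PP(1)^3\cdot X=\dfrac{\deg X}{1\cdot 1\cdot 2\cdot 2\cdot 3}=\dfrac{6}{12}=\dfrac12$; equivalently one can compute $A_X^3$ from the basket via the orbifold Riemann--Roch / the formula $A^3 = (\text{integer part}) - \sum (\text{basket corrections})$, and the three $\frac12(1,1,1)$-points each contribute $1/2$, again giving $1/2$.

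For \ref{cor:X6:3}: by definition $\qQ(X)$ is the largest $q$ with $-K_X\qq qA$ for an integral Weil divisor $A$. We have $-K_X\qq 3A_X$ with $A_X=\OOO_X(1)$ a generator of the torsion-free group $\Cl(X)$, so $\qQ(X)\ge 3$; and $\qQ(X)\le 3$ because $A_X$ generates $\Cl(X)$, so $-K_X$ cannot be $q$ times an integral class for $q>3$. (Here $\Cl(X)$ torsion-free — part \ref{cor:X6:5} — guarantees that ``$\qq$'' may be replaced by genuine linear equivalence, so no subtlety about torsion ambiguity arises.) Finally \ref{cor:X6:5} is quoted directly: by \cite[Proposition~2.9]{P:2010:QFano} a $\QQ$-Fano threefold with $\qQ\ge 3$ has torsion-free class group, and in any case the restriction map $\Cl(\PP)\to\Cl(X)$ is an isomorphism onto a subgroup of the torsion-free group $\Cl(\PP)\simeq\ZZ$. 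This completes the proof.
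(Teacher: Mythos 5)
The paper states this corollary without a separate proof, as an immediate consequence of Proposition~\ref{setup:q=3} together with the cited references; your treatment of \ref{cor:X6:1} (reading the basket off the table via the axial-weight bookkeeping of \cite[\S 6.4]{Reid:YPG}) and of \ref{cor:X6:4} (either $\OOO_X(1)^3=6/12$ or orbifold Riemann--Roch) is in the same spirit and is fine. The genuine gap is in your argument for \ref{cor:X6:2} and \ref{cor:X6:3}: everything there hinges on the assertion that $\Cl(X)$ is generated by $\OOO_X(1)$, which you justify only by saying that ``the class group is computed from the ambient one by restriction.'' No such Grothendieck--Lefschetz statement for the \emph{Weil} divisor class group is available under the hypotheses in force here: at this point of the paper $X$ is only assumed to have terminal singularities --- it need not be quasi-smooth, and $\QQ$-factoriality is not assumed (it is added as a separate hypothesis in Theorem~\ref{thm:main} precisely because $\Cl(X)$ can jump for special members, e.g.\ non-$\QQ$-factorial ones). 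Note that the paper itself never claims $\Cl(X)\simeq\ZZ\cdot\OOO_X(1)$; it only asserts torsion-freeness, with a citation. Even granting $\uprho(X)=1$ and torsion-freeness, one must still exclude that $\OOO_X(1)$ is divisible in $\Cl(X)$, i.e.\ that $\qQ(X)=6$; your sentence ``there is no smaller Weil divisor class'' is exactly the statement that needs proof. A correct elementary fix: $\OOO_X(2)$ is Cartier (its non-Cartier locus in $\PP(1^2,2^2,3)$ is the $\mumu_3$-point, which does not lie on $X$), so for any Weil divisor $D$ with $-K_X\qq qD$ one has $q\cdot\bigl(D\cdot\OOO_X(2)^2\bigr)=(-K_X)\cdot\OOO_X(2)^2=6$ with $D\cdot\OOO_X(2)^2$ a positive integer, whence $q\in\{1,2,3,6\}$; and $q=6$ is impossible because torsion-freeness would give $2D\sim\OOO_X(1)$, while $\OOO_X(1)$ generates the local class group $\ZZ/2$ at a non-Gorenstein point (which exists by Proposition~\ref{setup:q=3}), so it cannot be locally $2$-divisible. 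With $\qQ(X)=3$ established, $A_X\sim\OOO_X(1)$ then follows from torsion-freeness, i.e.\ \ref{cor:X6:5} must be used as input to \ref{cor:X6:2}, not the other way around.

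Two smaller points. Your fallback justification of \ref{cor:X6:5} (``the restriction map $\Cl(\PP)\to\Cl(X)$ is an isomorphism onto a subgroup of the torsion-free group $\Cl(\PP)$'') is logically backwards: injectivity of $\Cl(\PP)\to\Cl(X)$ gives no information about torsion in $\Cl(X)$; the only legitimate justification here is the quoted \cite[Proposition~2.9]{P:2010:QFano}, as in the paper. Also the parenthetical ``isolated, hence terminal, hence at worst quotient singularities'' is incorrect --- Gorenstein terminal points (e.g.\ the \typeA{n}-points relevant later in the paper) are hypersurface, not quotient, singularities --- though nothing essential rests on it.
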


\begin{remark}
According to \cite[Theorem~1.5]{CampanaFlenner} or \cite{Sano-1996} 
the properties~\ref{cor:X6:1}, \ref{cor:X6:3}, and~\ref{cor:X6:4} characterize hypersurfaces $X_6\subset \PP(1^2,2^2,3)$: any Fano threefold with
terminal singularities of Gorenstein index $\le 2$ such that $\qQ(X)=3$
and $A_X^3=1/2$
is isomorphic to a hypersurface $X_6\subset \PP(1^2,2^2,3)$.
\end{remark}

\begin{corollary}
\label{lemma:q=3o:pp}
In the notation of \xref{ass:3}
the linear system 
$|2A_X|$ is base point free and defines a double cover $X\to Q\subset \PP^4$, 
where 
$Q$ is a quadric of corank $2$, i.e. $Q\simeq \PP(1^2,2^2)$. 
The branch divisor is an element of the anticanonical linear system $|-K_Q|$.
\end{corollary}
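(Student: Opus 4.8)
The plan is to work directly from the normal forms for the equation of $X$ given in Proposition~\xref{setup:q=3} together with the numerical data of Corollary~\xref{cor:X6}, and to identify the map $\Phi_{|2A_X|}$ with a coordinate projection from $\PP(1^2,2^2,3)$. First I would recall that $\OOO_X(A_X)=\OOO_X(1)$ by Corollary~\xref{cor:X6}\ref{cor:X6:2}, so $|2A_X|$ is cut out on $X$ by the degree-$2$ part of the graded coordinate ring of $\PP(1^2,2^2,3)$, which is spanned by $x_1^2$, $x_1y_1$, $y_1^2$, $x_2$, $y_2$. Thus $|2A_X|$ has no base points iff these five sections have no common zero on $X$; since their common zero locus in the ambient space is the single point $(0\mathord:0\mathord:0\mathord:0\mathord:1)$, and that point does not lie on $X$ because the equation contains the monomial $x_3^2$ (see the first lines of the proof of Proposition~\xref{setup:q=3}), the system $|2A_X|$ is base point free. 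The associated morphism is
\[
\varphi\colon X\longrightarrow \PP^4,\qquad
(x_1\mathord:y_1\mathord:x_2\mathord:y_2\mathord:x_3)\longmapsto
(x_1^2\mathord:x_1y_1\mathord:y_1^2\mathord:x_2\mathord:y_2),
\]
which is nothing but the restriction to $X$ of the natural projection $\PP(1^2,2^2,3)\dashrightarrow \PP(1^2,2^2)\simeq\PP^4$.

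Next I would identify the image and the degree. Writing $z_0,z_1,z_2,z_3,z_4$ for the coordinates on $\PP^4$, the identity $x_1^2\cdot y_1^2=(x_1y_1)^2$ shows that $\varphi(X)$ lies on the quadric $Q:=\{z_0z_2=z_1^2\}\subset\PP^4$, which is a quadric cone of corank $2$ (its vertex is the line $\{z_0=z_1=z_2=0\}$), isomorphic as a variety to $\PP(1^2,2^2)$ via $(s\mathord:t\mathord:z_3\mathord:z_4)\mapsto(s^2\mathord:st\mathord:t^2\mathord:z_3\mathord:z_4)$. Conversely every monomial of degree $2$ in $\PP(1^2,2^2)$ is hit, so $\varphi$ factors as $X\to Q$, and this is exactly the double cover $X\to\PP(1^2,2^2)$ induced by forgetting $x_3$. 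That it has degree $2$ is clear from the fact that the equation of $X$ is quadratic in $x_3$ with the $x_3$-linear term removed (completed square), so a general fibre consists of the two points $x_3=\pm\sqrt{-g}$, where $g$ is the part of the equation not involving $x_3$; the branch divisor is $B:=\{g=0\}\subset Q$. Finally, $g$ has degree $6$ in $\PP(1^2,2^2,3)$, hence $B\in|\OOO_Q(6)|$; since $-K_Q=\OOO_Q(1+1+2+2)=\OOO_Q(6)$ by the adjunction/canonical-bundle formula for weighted projective spaces, we get $B\in|-K_Q|$, as asserted. (Alternatively one can avoid the explicit computation of $-K_Q$ by using $K_X=\varphi^*K_Q+R$ with $2R$ linearly equivalent to $\varphi^*B$ and $-K_X\qq 3A_X$, $A_X=\varphi^*\OOO_Q(1)^{1/2}$, to back out the class of $B$.)

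There is essentially no serious obstacle here; the one point that needs a little care is the precise sense in which $Q$ "is" $\PP(1^2,2^2)$ and in which $g$ "has degree $6$ on $Q$". I would spell this out by noting that pulling back $\OOO_{\PP^4}(1)$ along the map $\PP(1^2,2^2)\to Q\subset\PP^4$ gives $\OOO_{\PP(1^2,2^2)}(2)$, so $\OOO_Q(k)$ corresponds to $\OOO_{\PP(1^2,2^2)}(2k)$; with this dictionary $-K_Q$ corresponds to $\OOO_{\PP(1^2,2^2)}(6)$ and the branch divisor, being cut out by the image of $g\in H^0\bigl(\PP(1^2,2^2,3),\OOO(6)\bigr)$, corresponds to the same sheaf, which is consistent and confirms $B\in|-K_Q|$. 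The other routine check — that $\varphi$ really is finite of degree $2$ rather than, say, birational onto a quadric counted with multiplicity — follows from the $\mumu_2$-action $x_3\mapsto -x_3$ on $X$, whose quotient is exactly $Q$ and whose fixed locus maps onto $B$.
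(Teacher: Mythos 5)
Your proof is correct and takes essentially the same route as the paper: the paper's argument is exactly to write $\Phi_{|2A_X|}$ as $(x_1,y_1,x_2,y_2,x_3)\mapsto(x_1^2,x_1y_1,y_1^2,x_2,y_2)$, observe it is a morphism (the only common zero $(0\mathord:0\mathord:0\mathord:0\mathord:1)$ is excluded since the equation contains $x_3^2$), and identify the image with the rank-$3$ quadric $\{z_1z_3=z_2^2\}\simeq\PP(1^2,2^2)$. Your additional verifications of the degree-$2$ structure via $x_3\mapsto-x_3$ and of the branch divisor lying in $|{-K_Q}|=|\OOO_{\PP(1^2,2^2)}(6)|$ are just the details the paper leaves implicit, and they are accurate.
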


\begin{proof}
The map $\Phi_{|2A_X|}: X \dashrightarrow \PP^4$ is given by $(x_1,y_1,x_2,y_2, 
x_3) \longmapsto (x_1^2, x_1y_1,y_1^2,x_2,y_2)$.
It is easy to see that this map is a morphism and its image is the quadric 
$\{z_1z_3-z_2^2=0\}$.
\end{proof}

\begin{corollary}
\label{cor:n-mod-rat}
If in the notation of \xref{ass:3} the variety $X$ contains a non-Gorenstein 
singularity that is worse than 
moderate, then $X$ is rational.
\end{corollary}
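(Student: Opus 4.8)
The plan is to exploit the explicit equations in Proposition~\xref{setup:q=3} together with the $\QQ$-conic bundle machinery of Section~3. If $X$ has a non-Gorenstein singularity worse than moderate, then by the table in Proposition~\xref{setup:q=3} we are in one of the cases \eqref{eq:q=3o:eq2} with $\phi_2''=0$ (a \type{cAx/2}-point), or \eqref{eq:q=3o:eq1} with $\phi_2''=0$ (a \type{cD/2}- or \type{cE/2}-point). In each of these cases I would first locate the bad non-Gorenstein point $P$ explicitly; in coordinates it is the point $(0,0,0,1,0)$ (the vanishing locus of $x_1,y_1,x_2,x_3$). The key observation is that in these degenerate cases the equation of $X$ no longer involves $y_2^2$, so $X$ has a very simple structure when projected away from $P$.

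The main step is to construct a rational map $X \dashrightarrow$ (something visibly rational) by projection. Concretely, I would consider the projection from $P$, or equivalently the linear projection $\PP(1^2,2^2,3)\dashrightarrow \PP(1^2,2,3)$ forgetting the $y_2$-coordinate. Because the equation \eqref{eq:q=3o:eq2} with $\phi_2''=0$, resp. \eqref{eq:q=3o:eq1} with $\phi_2''=0$, is \emph{linear} in $y_2$ (in the first case the only $y_2$-terms are $x_2^2 y_2$ and $y_2\phi_4'$; in the second case $x_2y_2\phi_2'$ and $y_2\phi_4'$), one can solve for $y_2$ as a rational function of the remaining coordinates whenever the coefficient of $y_2$ is nonzero. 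This exhibits $X$ as birational to (an open subset of) $\PP(1^2,2,3)$ or to a hypersurface therein that is itself visibly rational — in fact one expects $X$ to be birational to $\PP^3$ directly, since the locus where the $y_2$-coefficient vanishes is lower-dimensional. I would carry this out case by case: for \eqref{eq:q=3o:eq2} with $\phi_2''=0$, solving $y_2=-(x_3^2+x_2\phi_4+\phi_6)/(x_2^2+\phi_4')$ gives a birational map from the affine chart of $\PP(1^2,2,3)$ with coordinates $x_1,y_1,x_2,x_3$, and this chart is rational; similarly for the \type{cD/2} and \type{cE/2} subcases of \eqref{eq:q=3o:eq1}.

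The technical point requiring care is verifying that the projection is genuinely birational and not merely dominant of some degree — i.e. that for generic values of $(x_1,y_1,x_2,x_3)$ there is exactly one point of $X$ above it. This is immediate once the defining equation is linear in $y_2$ with a coefficient not identically zero: the fiber of the projection over a general point is then a single point. One also should check the coefficient of $y_2$ does not vanish identically, which it does not since, e.g., in \eqref{eq:q=3o:eq2} the term $x_2^2 y_2$ is always present by the normalization, and in \eqref{eq:q=3o:eq1} either $\phi_2'\neq 0$ (the \type{cD/2} case) or $\phi_4'\neq 0$ (the \type{cE/2} case). Thus in every case $X$ is birational to a rational variety, hence rational. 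I do not expect a serious obstacle here beyond the bookkeeping of the separate cases; the one thing to watch is that the weighted-projective chart one lands in is correctly identified as rational (charts of $\PP(1^2,2,3)$ are quotients of $\CC^3$ by finite abelian groups, hence rational over $\CC$, or one can simply restrict to the chart $x_1\neq 0$ which is honestly affine $3$-space).
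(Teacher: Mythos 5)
Your argument covers only part of the hypothesis. ``Worse than moderate'' does \emph{not} force $\phi_2''=0$: according to the table in Proposition~\ref{setup:q=3}, a point of type \type{cA/2} given by \eqref{eq:q=3o:eq2} or \eqref{eq:q=3o:eq1} with $\phi_2''\neq 0$ is moderate \emph{iff} $\rk\phi_2''=2$, so the case $\rk\phi_2''=1$ is also a non-moderate (hence ``worse than moderate'') singularity and falls under the Corollary. In that case the term $y_2^2\phi_2''$ is present, the equation is genuinely quadratic in $y_2$, and your projection from $P=(0,0,0,1,0)$ is $2{:}1$ rather than birational, so the proof as written breaks down exactly there. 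The paper treats this case separately: writing $\phi_2''=-x_1^2$ (rank one) and passing to the chart $x_1\neq 0$, the binomial $x_3^2+y_2^2\phi_2''$ factors as $(x_3-y_2)(x_3+y_2)$, and after the coordinate change $x_3'=x_3-y_2$, $y_2'=x_3+y_2$ the equation becomes linear in one of the new variables, whence rationality again follows by your projection trick. You would need to add this rank-one subcase (for both \eqref{eq:q=3o:eq2} and \eqref{eq:q=3o:eq1}) to have a complete proof.

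For the subcases you do treat ($\phi_2''=0$, i.e.\ the \type{cAx/2}, \type{cD/2}, \type{cE/2} points), your argument is correct and is the same as the paper's: the equation omits $y_2^2$ but contains $x_2^2y_2$, $x_2y_2\phi_2'$ or $y_2\phi_4'$, so solving linearly for $y_2$ gives a birational map to a (rational) weighted projective $3$-space.
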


\begin{proof}
Indeed, if $X$ has a singularity that is worse than of type \type{cA/2}, then 
$X$ 
is given by an equation of the form \eqref{eq:q=3o:eq2} or \eqref{eq:q=3o:eq1}
which does not contain $y_2^2$ but contains ether $x_2^2y_2$, $x_2y_2\phi_2'$ 
or 
$y_2\phi_4'$, hence it is rational.
If $X$ has a singularity of type \type{cA/2} that is not moderate, then it 
is given by an equation of the form \eqref{eq:q=3o:eq2} or \eqref{eq:q=3o:eq1} 
with $\rk \phi_2''=1$.
Then we may assume that $\phi_2''=-x_1^2$ and so in the affine chart $x_1\neq 
0$ 
the equation of $X$ has the form
\begin{align*}
\tag{\ref{eq:q=3o:eq2}${}^\prime$}
(x_3-y_2)(x_3+y_2)+x_2^2y_2+x_2\phi_4+\phi_4'+\phi_6=0,
\\
\tag{\ref{eq:q=3o:eq1}${}^\prime$}
(x_3-y_2)(x_3+y_2)+x_2^3+
x_2y_2\phi_2'+x_2\phi_4+
\phi_4'+\phi_6=0.
\end{align*}
Then applying the coordinate change $x_3'=x_3-y_2$, $y_2'=x_3+y_2$ we can see 
that $X$ is rational.
\end{proof}

\begin{setup}
\label{setup:q=3a}
From now on we assume that $X$ is a $\QQ$-Fano variety and any non-Gorenstein 
singularity of $X$ is moderate. 

\end{setup}

\begin{theorem}
\label{thm:q=3o:main}
In the above notation there exist 
the following Sarkisov link
\begin{equation}
\label{eq:q=3o:sl}
\vcenter{
\xymatrix@C=3em{
& \tilde X\ar@/_0.7em/[dl]_{f}\ar@/^0.7em/[dr]^{\varphi}
\\
X\ar@{-->}[rr]&&\PP(1^2,2)
} }
\end{equation}
where $f$ is an extremal blowup of a point of index $2$, as in Lemma~\xref{lemma:moderate-blowup}, and
$\varphi$ is an extremal $\QQ$-conic bundle.
The discriminant curve $\Delta_{\varphi}\subset\PP(1^2,2)$ is a member of the 
linear system $|-2K_{\PP(1^2,2)}|$.
The germ of $\varphi$ over the point $o:=(0,0,1)\in \PP(1^2,2)$
is of type \typeci{T}{2} or \typec{ID_1^\vee} and all non-Gorenstein points of 
$\tilde X$ are contained in the fiber 
$\varphi^{-1}(o)$.
\end{theorem}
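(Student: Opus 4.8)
The plan is to build the link by blowing up a non-Gorenstein point of $X$ and showing that the resulting variety carries a $\QQ$-conic bundle structure whose base is $\PP(1^2,2)$. By \ref{setup:q=3a} and Corollary~\ref{cor:X6} the variety $X$ has exactly three non-Gorenstein points, all of type $\frac12(1,1,1)$, lying on the line $\{x_1=y_1=x_3=0\}$; however, after completing squares in $x_3$ and using the moderateness hypothesis, the relevant point $P$ to blow up is the one at $(0,0,1,0,0)$ (for equations \eqref{eq:q=3o:eq2} and \eqref{eq:q=3o:eq1}) where the singularity is moderate of type \type{cA/2} with axial weight $2$ or $3$. First I would let $f\colon\tilde X\to X$ be the weighted blowup of $P$ described in Lemma~\ref{lemma:moderate-blowup} with weights $\tfrac12(1,1,1,2)$; by that lemma and Corollary~\ref{cor:moderate-blowup1}, $f$ is an extremal blowup, the exceptional divisor $E$ is Cartier with $E\simeq\PP(1^2,4)$, the discrepancy is $1/2$, and $\tilde X$ has a single non-Gorenstein point (of type $\frac12(1,1,1)$) when $\aw=2$, or one moderate \type{cA/2}-point when $\aw=3$, together with the two remaining $\frac12(1,1,1)$-points of $X$ which are unaffected by $f$.

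Next I would exhibit the second contraction $\varphi$. Working in the chart $x_2\ne0$ (so $x_2=1$), the equation of $X$ becomes $x_3^2 + (\text{cubic in }y_2,x_1,y_1\text{ with }y_2\text{-leading term})=0$ after setting $x_2=1$; the affine threefold then maps to $\CC^2_{x_1,y_1}$, and globally one gets a morphism $\varphi\colon\tilde X\to\PP(1^2,2)$ extending the projection $(x_1,y_1,x_2,y_2,x_3)\mapsto(x_1,y_1,x_2)$. The key points to check are: (i) $\varphi$ is well-defined as a morphism on all of $\tilde X$ — the blowup of $P$ precisely resolves the indeterminacy of the rational projection $X\dashrightarrow\PP(1^2,2)$; (ii) every fibre of $\varphi$ is one-dimensional (a conic), which follows because the generic fibre is a double cover of $\PP^1$ branched in $2$ points, i.e. a conic, and the special fibres over $\Delta_\varphi$ and over $o=(0,0,1)$ are degenerate conics; (iii) $-K_{\tilde X}$ is $\varphi$-ample and $\uprho(\tilde X/\PP(1^2,2))=1$, so that $\varphi$ is an extremal $\QQ$-conic bundle. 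For (iii) I would use that $\uprho(\tilde X)=2$ (as $\uprho(X)=1$ and $f$ adds one), that $E$ is $f$-exceptional hence $\varphi$-horizontal, and compute $-K_{\tilde X}\cdot C>0$ on a general fibre $C$ directly from $-K_{\tilde X}=f^*(-K_X)-\tfrac12E$ and the projection formula, as in the computation in the proof of Theorem~\ref{thm:SL-CB}. That $f$ and $\varphi$ together constitute a Sarkisov link of type \type{I} is then formal: $X$ is $\QQ$-Fano of Picard rank $1$, $f$ is a Mori-extremal divisorial contraction, $\varphi$ is a Mori fibre space, and $\uprho(\tilde X)=2$.

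It remains to identify $\Delta_\varphi$ and the central fibre. The branch surface of the double cover $\tilde X\to\PP(1^2,2)\times(\text{something})$ is governed by the discriminant of the conic, which is a quadratic form in two of the coordinates on the fibre with coefficients homogeneous in $x_1,y_1,x_2$; tracking degrees shows $\Delta_\varphi\in|-2K_{\PP(1^2,2)}|$ (note $-K_{\PP(1^2,2)}\sim\OOO(4)$, so this is a degree-$8$ curve in the weights $1,1,2$, consistent with the degree-$6$ hypersurface geometry and formula \eqref{eq:cb:b} of Lemma~\ref{lemma:cb} using $A_X^3=1/2$ and $h^2=1/2$). Over the point $o=(0,0,1)$ — which is the image of both the exceptional divisor $E$ and the $\frac12(1,1,1)$-locus — I would compute the germ of $\varphi$ explicitly from the chart $x_2=1$: when $\aw=2$ one reads off the model $\{y_1^2+y_2^2+uvy_3^2=0\}/\mumu_2(-1,\dots)$ of Example~\ref{ex:ID}, giving type \typec{ID_1^\vee}; when $\aw=3$ (equivalently in the case where the blown-up point contributed a residual \type{cA/2}-point on $\tilde X$) one gets instead two $\frac12$-quotient singularities on the central fibre, i.e. type \typeci{T}{2} by Example~\ref{ex:T}. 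In either case Proposition~\ref{prop-rem:lc} (or Proposition~\ref{prop:cb-index2}, since $S\ni o$ is of type \type{A_1} and all singularities of $\tilde X$ in the fibre are moderate of index $\le2$) identifies the germ, and confirms that $\varphi^{-1}(o)$ contains all non-Gorenstein points of $\tilde X$: the two remaining $\frac12(1,1,1)$-points of $X$ map to $o$ as well because they lie on $\{x_1=y_1=0\}$ which is exactly $\varphi^{-1}(o)$ after the coordinate normalization.

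\textbf{Main obstacle.} The delicate part is (ii)–(iii): proving that $\varphi$ has \emph{only} one-dimensional fibres and is genuinely extremal, i.e. that no fibre jumps to dimension $2$ and that $\uprho(\tilde X/\PP(1^2,2))=1$ rather than being larger. This requires knowing that $X$ itself has no pathological fibres over $o$ (which uses $\QQ$-factoriality and terminality, Assumption~\ref{ass:3}/\ref{setup:q=3a}) and a careful local analysis of $\tilde X$ along $\varphi^{-1}(o)$ — precisely the kind of computation that produces the types \typeci{T}{2} and \typec{ID_1^\vee}. I expect to handle this by the standard trick of running a $K$-MMP over $\PP(1^2,2)$ and arguing that the only extremal ray is the fibre class, together with the explicit local models above.
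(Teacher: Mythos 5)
Your overall plan (blow up an index-$2$ point, exhibit the second contraction as the coordinate projection to $\PP(1^2,2)$, read off the germ over $o$ from the local models) is the right shape, but the step you yourself flag as the ``main obstacle'' is exactly where the theorem lives, and you leave it unproved. The paper does not analyze the projection directly; it plays a two-ray game: with $\tilde M_1$, $\tilde M_2$ the proper transforms of general members of $|A_X|$ and of $|2A_X-P|$, one checks $\tilde M_2\sim f^*(2A_X)-E$ and proves it is nef because $\Bs|2A_X-P|$ is zero-dimensional (an explicit coordinate check); hence $\tilde M_1\qq f^*A_X-\frac12E$ is nef and $-K_{\tilde X}\sim \tilde M_1+f^*(2A_X)$ is ample, so a second extremal contraction $\varphi$ exists. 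The numbers $\tilde M_1^3=0$ and $\tilde M_1^2\cdot(-K_{\tilde X})=1$ rule out a birational contraction and a curve base, $k^2h^2=1/2$ together with Corollary~\ref{cor:base-surface} forces $S\simeq\PP(1^2,2)$, and $(-K_{\tilde X})^2\cdot\tilde M_1=4$ with \eqref{eq:cb:b} gives $\Delta_{\varphi}\in|-2K_S|$. Your proposed substitute, ``run a $K$-MMP over $\PP(1^2,2)$'', presupposes that you already have a morphism to $\PP(1^2,2)$ with one-dimensional fibers and $\uprho(\tilde X/S)=1$ --- which is precisely what has to be established; in effect the nefness of $\tilde M_1$ and $\tilde M_2$ \emph{is} the statement that the blowup resolves the indeterminacy, and you give no argument for it.

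There are also concrete errors in your singularity bookkeeping. The point whose blowup resolves the projection $(x_1,y_1,x_2,y_2,x_3)\mapsto(x_1,y_1,x_2)$ is $(0,0,0,1,0)$, not $(0,0,1,0,0)$: in case \eqref{eq:q=3o:eq2} the latter is the $\frac12(1,1,1)$ point (the \type{cA/2} point sits at $(0,0,0,1,0)$), and in case \eqref{eq:q=3o:eq1} it does not even lie on $X$. In cases \eqref{eq:q=3o:eq2} and \eqref{eq:q=3o:eq1} there are no ``two remaining $\frac12(1,1,1)$-points of $X$'': the non-Gorenstein locus is $\frac12(1,1,1)+\type{cA/2}$, respectively a single \type{cA/2} point. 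Finally, your matching of germ types is reversed: by Lemma~\ref{lemma:moderate-blowup}, blowing up a moderate point of axial weight $3$ leaves a single moderate point of axial weight $2$, so the germ over $o$ is then of type \typec{ID_1^\vee}, whereas type \typeci{T}{2} occurs exactly when two $\frac12(1,1,1)$ points survive (case \eqref{eq:q=3o:eq3}, or case \eqref{eq:q=3o:eq2} after blowing up the axial-weight-$2$ point); this is the content of Claim~\ref{claim:Sing-tildeX} combined with Proposition~\ref{prop:cb-index2}. So the explicit-projection route could be made rigorous, but as written both the extremality/morphism step and the case analysis over $o$ need to be redone.
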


The rest of this section is devoted to the proof of this theorem.
We will assume that $X$ is given by one of the 
equations~\eqref{eq:q=3o:eq3}-\eqref{eq:q=3o:eq1}.
Let $P\in X$ be a point of index $2$, let
\[
f: \tilde X\longrightarrow X
\]
be an extremal blowup of $P$ (see
Lemma~\xref{lemma:moderate-blowup}), and let $E$ be the $f$-exceptional divisor.

\begin{claim}
\label{claim:Sing-tildeX}
The set of non-Gorenstein singularities of $\tilde X$ consists of either 
two cyclic quotients $\frac 12(1,1,1)$ or a single moderate singularity of axial weight $2$.
\end{claim}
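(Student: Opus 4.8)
The plan is to apply Lemma~\ref{lemma:moderate-blowup} directly to the point $P\in X$ of index $2$. By Setup~\ref{setup:q=3a}, $P$ is a moderate non-Gorenstein singularity, so it is a point of type \type{cA/2} with $r=2$, hence given analytically by $\{xy+z^2+t^m=0\}/\mumu_2(1,1,1,0)$ with $m=\aw(X,P)\in\{1,2\}$ (a priori $m$ could be larger, but Proposition~\ref{setup:q=3} restricts the axial weight of the index-$2$ points occurring on $X$ to $2$ or $3$, and one reads off from the table that in the cases where $X$ is a $\QQ$-Fano variety with only moderate non-Gorenstein singularities one has $\aw\le 2$ at such a point; alternatively $m=1$ means $P$ is a cyclic quotient $\tfrac12(1,1,1)$ and $f$ is the Kawamata blowup). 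First I would dispose of the case $m=1$: then $\tilde X$ is obtained by the Kawamata blowup of a $\tfrac12(1,1,1)$-point, the exceptional divisor is $\PP(1,1,1)$, and $f$ introduces no new non-Gorenstein points, so the non-Gorenstein locus of $\tilde X$ is $\Singng(X)\setminus\{P\}$. By Corollary~\ref{cor:X6}\ref{cor:X6:1}, the basket of $X$ consists of exactly three $\tfrac12(1,1,1)$-points; when the equation is of type~\eqref{eq:q=3o:eq3} all three are genuine cyclic quotients, so removing $P$ leaves exactly two of them, which is the first alternative of the Claim.

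Next, the case $m=2$: here $P\in X$ is moderate of axial weight $2$, occurring for equations of type~\eqref{eq:q=3o:eq2} or~\eqref{eq:q=3o:eq1} with $\rk\phi_2''=2$. I would invoke Corollary~\ref{cor:moderate-blowup1}: since $r=2$, the blowup $\tilde X\to X$ has a \emph{unique} singular point, $E$ is Cartier, and $E\simeq\PP(1,1,4)$. That unique singular point is the moderate singularity $\{xy+z^2+t^{2-1}=0\}/\mumu_2(1,1,1,0)=\{xy+z^2+t=0\}/\mumu_2$ of Lemma~\ref{lemma:moderate-blowup}; eliminating $t$, this is $\{xy+z^2=0\}/\mumu_2(1,1,1)$, i.e. a cyclic quotient singularity of type $\tfrac12(1,1,1)$. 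Combined with the fact that in these cases $\Singng(X)=\{P\}$ (read off the table in Proposition~\ref{setup:q=3}: equations~\eqref{eq:q=3o:eq2} with $\phi_2''\ne0$ give $\tfrac12(1,1,1)+\text{\type{cA/2}}$, but under Setup~\ref{setup:q=3a} we blow up the \type{cA/2}-point $P$, whereas equation~\eqref{eq:q=3o:eq1} with $\phi_2''\ne0$ has $\Singng(X)=\{P\}$ only), one needs to keep careful track of whether a second $\tfrac12(1,1,1)$ survives. For~\eqref{eq:q=3o:eq1} the only non-Gorenstein point of $X$ is $P$ itself, so the unique non-Gorenstein point of $\tilde X$ is the single axial-weight-$2$ singularity — the second alternative.

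The remaining subtlety is equation~\eqref{eq:q=3o:eq2} with $\phi_2''\ne0$ and $\rk\phi_2''=2$, where $\Singng(X)=\{\tfrac12(1,1,1)\}\cup\{P\}$ with $P$ of type \type{cA/2}, moderate of axial weight~$2$. If the chosen index-$2$ point is $P$ (the \type{cA/2}-point), the blowup produces one new $\tfrac12(1,1,1)$ on $E\simeq\PP(1,1,4)$ by Corollary~\ref{cor:moderate-blowup1}, and the pre-existing $\tfrac12(1,1,1)$ of $X$ is unaffected by $f$ (it lies away from $P$), so $\tilde X$ has two $\tfrac12(1,1,1)$-points — the first alternative. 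If instead the chosen point is the $\tfrac12(1,1,1)$-point of $X$, one applies the Kawamata blowup and the surviving non-Gorenstein point is the \type{cA/2}-point $P$, moderate of axial weight $2$ — the second alternative. I expect the main obstacle to be the bookkeeping: correctly matching, case by case in the table of Proposition~\ref{setup:q=3}, which index-$2$ point $P$ is blown up against which of Lemma~\ref{lemma:moderate-blowup}/Corollary~\ref{cor:moderate-blowup1} applies, and verifying in each line that the resulting non-Gorenstein locus is precisely one of the two asserted configurations; the geometric inputs (Lemma~\ref{lemma:moderate-blowup}, Corollary~\ref{cor:moderate-blowup1}, Corollary~\ref{cor:X6}) are exactly what is needed and require no further computation.
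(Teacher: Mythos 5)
Your overall strategy is the same as the paper's: the proof is exactly a case-by-case application of Proposition~\ref{setup:q=3} and Lemma~\ref{lemma:moderate-blowup} (with the Kawamata-type blowup in the cyclic quotient case $m=1$), and your treatment of case \eqref{eq:q=3o:eq3}, and of case \eqref{eq:q=3o:eq2} for both choices of the blown-up point, is correct.

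However, there is a genuine error in your reading of the table. You assert $\aw(X,P)\le 2$ for the moderate index-$2$ points, whereas for equation \eqref{eq:q=3o:eq1} with $\rk\phi_2''=2$ the table states that the unique non-Gorenstein point is moderate of axial weight $3$, and nothing in Set-up~\ref{setup:q=3a} excludes this case. This makes your analysis of \eqref{eq:q=3o:eq1} internally inconsistent: if one had $m=\aw(X,P)=2$ there, then by Lemma~\ref{lemma:moderate-blowup} (or your own computation two sentences earlier) the unique new singularity of $\tilde X$ would be $\{xy+z^2+t=0\}/\mumu_2(1,1,1,0)$, i.e.\ a single cyclic quotient $\frac12(1,1,1)$ of axial weight $1$ --- which matches \emph{neither} alternative of the Claim --- and yet you announce ``the single axial-weight-$2$ singularity''. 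The Claim holds in case \eqref{eq:q=3o:eq1} precisely because $m=3$: the weighted blowup of Lemma~\ref{lemma:moderate-blowup} with $r=2$, $a=1$ produces two smooth points in place of the cyclic quotients $\frac1a$, $\frac1{r-a}$ and the moderate point $\{xy+z^2+t^{2}=0\}/\mumu_2(1,1,1,0)$ of axial weight $2$; since here $\Singng(X)=\{P\}$, this is the second alternative. The fix is to drop the assertion $\aw\le 2$, list the three configurations allowed under Set-up~\ref{setup:q=3a} (three points $\frac12(1,1,1)$; one $\frac12(1,1,1)$ plus a moderate point of axial weight $2$; a single moderate point of axial weight $3$), and run the Lemma with the correct $m$ in each; with that correction your argument coincides with the paper's.
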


\begin{proof}
Follows from Proposition~\ref {setup:q=3} and Lemma~\ref{lemma:moderate-blowup}.
\end{proof}

Since $\Cl(X)$ is torsion free, so is the group $\Cl(\tilde X)$. Then
$\Cl(\tilde X)\simeq \ZZ\oplus \ZZ$.
Recall that 
the discrepancy of the $f$-exceptional divisor $E$ equals $1/2$ (see Lemma~\ref{lemma:moderate-blowup}). 
Hence,
\[\textstyle
K_{\tilde X}\qq f^*K_X+\frac12 E. 
\]
Let $M_1\in |A_X|$ and $M_2\in|2A_X-P| $ be general members, where $|2A_X-P|\subset |2A_X|$
is the linear subsystem consisting of all divisors passing through $P$.
By $\tilde M_1$ and $\tilde M_2$
we denote their proper transforms on $\tilde X$.
Since $f$ is the weighted blowup as in Lemma~\xref{lemma:moderate-blowup}, the following two relations can be checked locally by using the 
equations~\eqref 
{eq:q=3o:eq3}-\eqref{eq:q=3o:eq1}:
\begin{align*}
\tilde{M}_1&\qq f^*M_1 -\textstyle \frac12 E,
\\
\tilde{M}_2&\sim f^*M_2 - E.
\end{align*}
Note that the divisors $\tilde{M}_2$ and $K_{\tilde X}+\tilde{M}_1$ are 
Cartier. Further,
\begin{equation}
\label{eq:q=3:M2M1}
\tilde{M}_2\sim 2\tilde{M}_1\sim f^*(2A_X)-E,
\qquad 
K_{\tilde X}+\tilde{M}_1\sim -f^*(2A_X).
\end{equation}
We also have
\[
f^*A_X\cdot E^2=(f^*A_X)^2\cdot E=0, \qquad E^3=4
\]
(see Corollary~\ref{cor:moderate-blowup}).

\begin{claim}
\label{lemma:q=3o:BL}
\begin{enumerate}
\item 
\label{lemma:q=3o:BL1}
The divisors $\tilde{M}_2$ and $\tilde{M}_1$ are nef.
\item 
\label{lemma:q=3o:BL3}
The divisor $-K_{\tilde X}$ is ample.
\end{enumerate}
\end{claim}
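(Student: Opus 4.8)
The statement to prove is Claim~\xref{lemma:q=3o:BL}: that $\tilde M_1$, $\tilde M_2$ are nef and that $-K_{\tilde X}$ is ample. The plan is to work on $\tilde X$ using the two relations $\tilde M_2\sim f^*(2A_X)-E$ and $K_{\tilde X}+\tilde M_1\sim -f^*(2A_X)$ from \eqref{eq:q=3:M2M1}, together with the intersection numbers $f^*A_X\cdot E^2=(f^*A_X)^2\cdot E=0$, $E^3=4$, and $A_X^3=1/2$ (Corollary~\xref{cor:X6}). Since $\Cl(\tilde X)\simeq\ZZ\oplus\ZZ$ is generated (up to finite index) by $f^*A_X$ and $E$, and $f$ is extremal with $\uprho(\tilde X/X)=1$, the Mori cone $\overline{\mathrm{NE}}(\tilde X)$ is two-dimensional; one of its extremal rays is the ray $R_f$ contracted by $f$ (spanned by a curve in $E$). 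So the whole point is to identify the other extremal ray $R'$ and check that $\tilde M_1,\tilde M_2$ are nonnegative on both rays, and that $-K_{\tilde X}$ is positive on both.

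\textbf{First steps.} First I would show $\tilde M_2$ is nef. The linear system $|2A_X|$ is base point free (Corollary~\xref{lemma:q=3o:pp}) and defines the double cover $X\to Q\subset\PP^4$; the sublinear system $|2A_X-P|$ of divisors through $P$ is therefore base point free away from $P$, and the local computation in Lemma~\xref{lemma:moderate-blowup} (the weighted blowup with weights $\tfrac12(1,1,1,2)$) shows that the proper transform $\tilde M_2\sim f^*(2A_X)-E$ has no base points along $E$ either — concretely, $|{-K_Q}|=|\OOO_Q(2)|$ separates enough, and the coefficient $1$ on $E$ is exactly what the multiplicity of a general such divisor at $P$ forces. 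Hence $|\tilde M_2|$ is base point free, so $\tilde M_2$ is nef, and then $\tilde M_1$ is nef because $2\tilde M_1\sim\tilde M_2$ (the relation in \eqref{eq:q=3:M2M1}), and nefness is detected after multiplying by a positive integer.

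\textbf{Ampleness of $-K_{\tilde X}$.} By \eqref{eq:q=3:M2M1}, $-K_{\tilde X}\sim \tilde M_1+f^*(2A_X)=\tilde M_1+\tilde M_2-\tilde M_1+\dots$; more usefully $-K_{\tilde X}=\tilde M_1+f^*(2A_X)$ and also $-K_{\tilde X}=f^*(2A_X)-\tfrac12 E+f^*(2A_X)$ is not quite right, so I would instead write $-K_{\tilde X}=\tfrac12\tilde M_2+\tilde M_1=\tilde M_1+f^*(2A_X)-\tfrac12E+\dots$; the clean statement is $-K_{\tilde X}\sim \tilde M_1+\tilde M_2 - (\tfrac12 E)$? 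Rather than juggle, the robust route: $f$ is an extremal blowup, so $-K_{\tilde X}$ is $f$-ample by definition, i.e.\ positive on $R_f$. On the other extremal ray $R'$, note $-K_{\tilde X}\sim f^*(-K_X)-\tfrac12 E$ and $f^*(-K_X)=f^*(3A_X)$ is nef (pullback of ample) hence nonnegative on $R'$; it remains to see $E\cdot R'<0$ is impossible and that $-K_{\tilde X}\cdot R'>0$ strictly. Since $\tilde M_1=f^*A_X-\tfrac12 E$ is nef (just shown) and $-K_{\tilde X}=f^*(2A_X)+\tilde M_1$, it suffices that $f^*(2A_X)\cdot R'>0$: if $f^*A_X\cdot R'=0$ then $R'$ is a curve mapping to a point under $\Phi_{|2A_X|}\circ f$, forcing it into a fiber of a morphism, and one checks via the equations \eqref{eq:q=3o:eq3}–\eqref{eq:q=3o:eq1} together with $\tilde M_2\cdot R'\ge 0$, $\tilde M_2=f^*(2A_X)-E$, that $E\cdot R'\le 0$, whence $-K_{\tilde X}\cdot R' = \tilde M_1\cdot R' + f^*(2A_X)\cdot R' \ge -\tfrac12 E\cdot R'>0$ unless $R'=R_f$. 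The contradiction with $\uprho(\tilde X/X)=1$ (two distinct rays) closes it. The main obstacle is precisely ruling out $f^*(2A_X)\cdot R'=0$ with $E\cdot R'=0$ simultaneously, i.e.\ pinning down that the second extremal contraction is not a flipping contraction sending $E$-trivial curves to points — this requires examining the linear system $|\tilde M_2|$ and the morphism it induces on $\tilde X$ and verifying it is a genuine fibration onto $\PP(1^2,2)$, which is exactly the content feeding into Theorem~\xref{thm:q=3o:main}; equivalently one computes the relevant intersection numbers $(-K_{\tilde X})^3$, $(-K_{\tilde X})^2\cdot E$ explicitly from $E^3=4$, $(f^*A_X)^3=1/2$ and checks positivity by the toric/degeneration description of $f$ in Lemma~\xref{lemma:moderate-blowup}.
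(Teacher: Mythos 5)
Your skeleton for part (i) is the paper's: reduce nefness of $\tilde M_2\sim f^*(2A_X)-E$ to the fact that $\Bs|2A_X-P|$ is zero-dimensional, and get $\tilde M_1$ from $2\tilde M_1\sim\tilde M_2$. But you replace what is actually needed by the stronger assertion that $|\tilde M_2|$ is base point free \emph{along} $E$, justified only by ``the local computation in Lemma~\xref{lemma:moderate-blowup} shows''. That computation is not carried out, and it is not needed. The paper's argument avoids it: since $\tilde M_2\equiv -E$ relatively over $X$ and $\uprho(\tilde X/X)=1$, the restriction $\tilde M_2|_E$ is ample; hence any irreducible curve $\tilde C$ with $\tilde M_2\cdot\tilde C<0$ is not contained in $E$, so $C=f(\tilde C)$ is a curve contained in $\Bs|2A_X-P|$, contradicting zero-dimensionality (for $P=(0,0,0,1,0)$ one has $\Bs|2A_X-P|\subset\{x_1=y_1=x_2=x_3=0\}$). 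Either supply the bpf verification on $E$ honestly, or switch to this relative-ampleness argument; as written, part (i) has a gap exactly at the point where you wave at the weighted blowup.

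For part (ii) the gap is more serious and you flag it yourself: you call ruling out $f^*(2A_X)\cdot R'=0$ together with $E\cdot R'=0$ the ``main obstacle'' and propose to resolve it by verifying that $|\tilde M_2|$ induces a genuine fibration onto $\PP(1^2,2)$. That would be circular: the existence of the $\QQ$-conic bundle $\varphi$ in Theorem~\xref{thm:q=3o:main} is deduced \emph{from} the ampleness of $-K_{\tilde X}$ you are trying to prove. The missing step is pure linear algebra: $\uprho(\tilde X)=2$ and $f^*A_X$, $E$ are not proportional, so they span $N^1(\tilde X)_{\QQ}$; a nonzero class in $\overline{\mathrm{NE}}(\tilde X)$ orthogonal to both would be orthogonal to every divisor, in particular to an ample one, which is absurd. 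Equivalently, argue as the paper does: by \eqref{eq:q=3:M2M1}, $-K_{\tilde X}\sim\tilde M_1+f^*(2A_X)$ is a sum of two nef divisors that are not proportional, hence it is strictly positive on $\overline{\mathrm{NE}}(\tilde X)\setminus\{0\}$ and ample by Kleiman's criterion. With that observation your extremal-ray bookkeeping (positivity on $R_f$ by $f$-ampleness, and on $R'$ via nefness of $\tilde M_1$, $\tilde M_2$ and $f^*A_X$) does close, but the proposal as it stands leaves ampleness unproved and leans on a circular appeal to the later conic-bundle structure.
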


\begin{proof}
For the proof of \ref{lemma:q=3o:BL1},
assume that $\tilde{M}_2\cdot \tilde C< 0$ for some irreducible curve $\tilde 
C\subset \tilde X$. 
We have $\tilde{M}_2\sim f^* (2A_X)-E$.
Since $\uprho(\tilde X/X)=1$, the restriction of $\tilde{M}_2$ to $E$ is ample.
Therefore, $\tilde C\not\subset E$ and the curve $C:=f(\tilde C)$ is contained 
in the base locus of the linear system $|2A_X-P|$.
On the other hand, $\Bs |2A_X-P|$ is zero-dimensional.
Indeed, if $P$ is the point $(0,0,1,0,0)$, then 
$\Bs |2A_X-P|\subset \{x_1=y_1=x_2=x_3=0\}$
(for other choices of $P$ the arguments are the same). 
Hence $\Bs |2A_X-P|$ cannot contain a curve, a contradiction.
Thus $\tilde{M}_2$ is nef and so $\tilde{M}_1$ is by \eqref{eq:q=3:M2M1}. 
This proves \ref{lemma:q=3o:BL1}. For the proof of \ref{lemma:q=3o:BL3} we just 
note that there is a decomposition
$-K_{\tilde X}\sim \tilde{M}_1+f^*(2A_X)$, where the summands are nef and not 
proportional.
\end{proof}

Since $-K_{\tilde X}$ is ample and $\uprho(\tilde X)=2$, there exists an 
extremal Mori contraction $\varphi:\tilde X\to S$ other than $f$. 
Since the divisor $\tilde{M}_1$ is nef and
$\tilde{M}_1^3=(f^*A_X -\frac 12 E)^3=A_X^3-\frac18 E^3=0$, we see that $\tilde{M}_1$ is
trivial on the fibers of $\varphi$ and the contraction $\varphi$ is not 
birational.
Since 
\[\textstyle
\tilde{M}_1^2\cdot (-K_{\tilde X})=\left(f^*A_X -\frac 12 E\right)^2 \cdot 
\left(3f^*A_X -\frac 12 E\right)=3A_X^3-\frac18 E^3=1,
\]
the target $S$ of the contraction is not a curve. Hence
$\varphi$ is a $\QQ$-conic bundle.
We can write $\tilde{M}_1\sim \varphi^*(kh)$, where $h$ is a positive generator of $\Cl(S)\simeq \ZZ$
and $k$ is a positive integer. Then $k^2h^2=1/2$ by~\eqref{eq:cb:a}.
Then by Corollary~\ref{cor:base-surface} we have $k=1$, $h^2=1/2$,
and $S\simeq 
\PP(1^2,2)$.
By Proposition~\ref{prop:cb-index2} the germ over $o\in S$ is of type 
\typec{T} or \typec{ID_1^\vee}. Then all the non-Gorenstein singularities of 
$\tilde X$ lie on 
$\varphi^{-1}(o)$ (see Claim~\ref{claim:Sing-tildeX}).
Finally, $(-K_{\tilde X})^2\cdot \tilde M_1=4$, hence
by \eqref{eq:cb:b} we have $\Delta_{\pi}\in |-2K_{S}|$.

\begin{corollary}[{\cite[Proposition~4.15]{CampanaFlenner}}, 
{\cite[Proposition~6.4]{P:QFano-rat1}}]
In the assumptions of Theorem~\xref{thm:q=3o:main} the variety $X$ is 
unirational of degree at most~$2$.
\end{corollary}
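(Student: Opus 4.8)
The plan is to derive the statement from the conic bundle $\varphi\colon\tilde X\to S\simeq\PP(1^2,2)$ furnished by Theorem~\ref{thm:q=3o:main} (one could also start from the double cover $X\to Q\simeq\PP(1^2,2^2)$ of Corollary~\ref{lemma:q=3o:pp}, whose target is rational, but the conic bundle is the more convenient structure). Since $f\colon\tilde X\to X$ is birational, it suffices to prove that $\tilde X$ is unirational of degree at most $2$.

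I would pass to the generic fibre $\tilde X_\eta$ of $\varphi$, a smooth conic over $K:=\CC(S)$. Being a curve of genus $0$ it has a closed point whose residue field $K'$ satisfies $[K':K]\le2$; let $\alpha\colon S'\to S$ be the normalization of $S$ in $K'$, a finite morphism of degree $\le2$ of normal projective surfaces with $\CC(S')=K'$. Over $K'$ the conic acquires a rational point, hence $\tilde X_\eta\otimes_KK'\cong\PP^1_{K'}$, so $\tilde X\times_SS'$ is integral with function field $K'(t)$ and is therefore birational to $\PP^1\times S'$. On the other hand the first projection $\tilde X\times_SS'\to\tilde X$ is generically finite of degree $\le2$, its fibre over a general point $x$ being $\alpha^{-1}(\varphi(x))$. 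Consequently, as soon as $S'$ is known to be rational, $\PP^1\times S'$ is a rational threefold carrying a dominant rational map of degree $\le2$ onto $\tilde X$, and hence onto $X$, which is exactly what is required.

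It remains to choose the degree-$2$ point on $\tilde X_\eta$ — equivalently, a rational $2$-section of $\varphi$ — so that $S'$ is rational; this is the only non-formal part of the argument and the place where the specific geometry of $X$ enters. Here one uses that $S\simeq\PP(1^2,2)$ is a Du Val del Pezzo surface and that, by Theorem~\ref{thm:q=3o:main}, the discriminant satisfies $\Delta_\varphi\in|-2K_S|$, so its class is divisible by $2$ in $\Pic(S)$. Diagonalising the generic conic over $K$ as $\{aX^2+bY^2+cZ^2=0\}$ and splitting it by $K'=K(\sqrt{-ab})$ (say), the cover $S'\to S$ is ramified along a divisor which, because the diagonalisation can absorb part of $\Delta_\varphi$ into squares and because there is room inside $|-2K_S|$, can be arranged to be strictly sub‑anticanonical; a double cover of a Du Val del Pezzo branched along such a divisor has $-K_{S'}$ nef and big, hence is a weak del Pezzo surface and so is rational over $\CC$. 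This bookkeeping — controlled by the known class $\Delta_\varphi\in|-2K_S|$ together with the local type \typeci{T}{2} or \typec{ID_1^\vee} of the fibre over $o=(0,0,1)$ — is precisely what is carried out in \cite[Proposition~4.15]{CampanaFlenner} and \cite[Proposition~6.4]{P:QFano-rat1}; alternatively it can be done directly on the equations \eqref{eq:q=3o:eq3}--\eqref{eq:q=3o:eq1}, in which $x_3$ is literally a square root of a cubic in $x_2,y_2$ over the field of the parameters $x_1,y_1$, and the parametrisation is written down by hand. I expect the rationality of $S'$ — i.e. the control of its ramification divisor — to be the main obstacle; everything else is the standard quadratic base-change trick for conic bundles.
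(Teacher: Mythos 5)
Your overall skeleton (pull the conic bundle back along a degree-$2$ multisection, i.e.\ a quadratic field extension splitting the generic conic, and observe that the resulting $\PP^1$-bundle dominates $X$ with degree $2$) is the same base-change trick the paper uses, but the one step that carries all the content --- producing such a multisection whose base $S'$ is \emph{rational} --- is exactly the step you leave unproven. Your proposed mechanism (diagonalise the generic conic, split it by $K(\sqrt{-ab})$, and claim the branch divisor of $S'\to S$ ``can be arranged to be strictly sub-anticanonical'' because $\Delta_\varphi\in|-2K_S|$) is not an argument: the class of $\Delta_\varphi$ gives no control over the ramification of the double cover attached to an arbitrarily chosen degree-$2$ point of the generic fibre, and you yourself flag this as ``the main obstacle''. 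Also, your parenthetical fallback via the double cover $X\to Q\simeq\PP(1^2,2^2)$ of Corollary~\ref{lemma:q=3o:pp} does not help: a degree-$2$ cover of a rational variety need not be unirational, so the rationality of the target gives nothing by itself.

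The paper closes this gap with one concrete observation that your proposal misses: the $2$-section is already sitting in the Sarkisov link \eqref{eq:q=3o:sl}, namely the $f$-exceptional divisor $E$. By Lemma~\ref{lemma:moderate-blowup} (or the Kawamata blowup in the cyclic quotient case) $E$ is a rational surface, and since $\tilde M_1\sim f^*A_X-\frac12E$ is $\varphi$-trivial while $-K_{\tilde X}$ has degree $2$ on a general fibre $\ell$, one gets $E\cdot\ell=2$, so $\varphi|_E\colon E\to S$ is finite of degree $2$. The base change $\tilde X\times_S E$ then carries the diagonal section, hence its generic fibre is $\PP^1$ over the rational surface $E$, so $\tilde X\times_S E$ is rational, and its projection to $\tilde X$ (hence to $X$) has degree $2$. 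With this choice of $S'=E$ your argument becomes complete and is essentially the paper's proof; without it, the rationality of $S'$ is asserted rather than established, so the proposal as written has a genuine gap.
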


\begin{proof}
The divisor $E$ is rational and the restriction $\varphi: E\to S$ 
is finite of degree $2$. Then the base change $\tilde X\times_S E$ 
is rational and 
the induced morphism $\tilde X\times_S E\to X$ is finite of degree~$2$.
\end{proof}

\begin{remark}
Assume that in coordinates $x_1,y_1,x_2,y_2, x_3$ in $\PP(1^2,2^2,3)$, the 
variety $X$ is given by one of the equations \eqref{eq:q=3o:eq3},
\eqref{eq:q=3o:eq2}, or \eqref{eq:q=3o:eq1} and $P$ is the point $(0,0,0,1,0)$. 
Then
the map $X \dashrightarrow \PP(1^2,2)$ from~\eqref{eq:q=3o:sl} is just the projection 
\[
(x_1,y_1,x_2,y_2, x_3) \longmapsto (x_1,y_1,x_2)
\]
and the equation of $\Delta_{\varphi}\subset \PP(1,1,2)$
is as follows:

case
\eqref{eq:q=3o:eq3}:
$(x_2+\phi_2'')(x_2^2\phi_2 +x_2\phi_4+\phi_6)-4(
x_2^2+x_2\phi_2'+\phi_4')^2=0$;

case
\eqref{eq:q=3o:eq2}:
$\phi_2''(x_2\phi_4+\phi_6)-4(x_2^2+\phi_4')^2=0$;

case
\eqref{eq:q=3o:eq1}:
$\phi_2''(x_2^3+x_2\phi_4+\phi_6)-4(x_2\phi_2'+\phi_4')^2=0$.
\end{remark}

\section{Nonrationality}
In this section we prove the following result.
\begin{theorem}
\label{thm:cb}
Let $\varphi: \tilde{X}\to \PP(1^2,2)$ be an extremal $\QQ$-conic bundle such that 
the singularities of $\tilde{X}$ along the fiber over $o:=(0,0,1)$ are 
moderate of indices $\le 2$, 
outside the fiber over $o:=(0,0,1)$
they are nodes or cusps, and the number of the singularities on $\tilde{X} \setminus \varphi^{-1}(o)$ is at most $4$.
Furthermore, assume that 
\begin{equation}
\label{eq:K-Delta1}
\Delta_{\varphi}\sim -2K_{\PP(1^2,2)}.
\end{equation} 
Then $\tilde{X}$ is not rational.
\end{theorem}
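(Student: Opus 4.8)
The plan is to obtain a contradiction from the rationality of $\tilde X$ by applying the Prym-theoretic obstruction of Theorem~\xref{thm:J-Pr}. The strategy is to find a birational model of $\tilde X$ which is a \emph{standard} conic bundle $\pi:Y\to T$ over some projective surface, compute its discriminant curve $\Delta_\pi$, and show that the associated Prym variety $\Pr(\hat\Delta_\pi/\Delta_\pi)$ is \emph{not} a sum of Jacobians. First I would run the Sarkisov links of Theorem~\xref{thm:SL-CB} to resolve all the non-Gorenstein singular fibers and all the Gorenstein (node/cusp) singular fibers, one point at a time. The germ of $\varphi$ over $o$ is of type \typeci{T}{2} or \typec{ID_1^\vee}; in the first case the link from Theorem~\xref{thm:SL-CB}\ref{thm:SL-CB:a} replaces it by two germs of type \typeci{T}{1} (nothing to resolve) or, after the appropriate blow-up, by Gorenstein fibers, and in the second case Theorem~\xref{thm:SL-CB}\ref{thm:SL-CB:c} directly produces a smooth total space over $o$ with $\Delta_{\pi'}=\alpha^*\Delta_\varphi$. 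Similarly each node or cusp sitting on a Gorenstein singular fiber of type \typeGor{1} or \typeGor{2} is resolved by Theorem~\xref{thm:SL-CB}\ref{thm:SL-CB:o}--\ref{thm:SL-CB:o1}, again with controlled behaviour of the discriminant: in the \typeGor{1} case $\Delta$ is merely taken by proper transform, in the \typeGor{2} case likewise. Since by hypothesis there are at most $4$ Gorenstein singular points and they are nodes/cusps, and the germ over $o$ carries all non-Gorenstein points, after finitely many such links I arrive at a \emph{standard} conic bundle $\pi:Y\to T$, where $T\to\PP(1^2,2)$ is a composition of the crepant contractions $\alpha$ and of the blow-ups $\alpha$ over the Gorenstein points. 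By the crepancy relation \eqref{eq:log-crepant} one has $K_T+\tfrac12\Delta_\pi=(\text{pullback of }K_{\PP(1^2,2)}+\tfrac12\Delta_\varphi)$ up to the extra blow-ups over smooth points of $(\PP(1^2,2),\Delta_\varphi)$; in particular, since $T$ is a smooth rational surface obtained from $\PP(1^2,2)$ by finitely many ordinary blow-ups and crepant extractions, $\Delta_\pi$ is an explicitly described curve, birational to $\Delta_\varphi$.

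Next I would analyze the curve $\Delta_\varphi\sim -2K_{\PP(1^2,2)}$ directly. On the del Pezzo surface $\PP(1^2,2)$ of degree $2$, the anticanonical class $-K$ has $(-K)^2=2$, so $\Delta_\varphi$ has arithmetic genus computed from adjunction $2\p(\Delta_\varphi)-2=(\Delta_\varphi+K)\cdot\Delta_\varphi=(-2K+K)\cdot(-2K)=2K^2\cdot(-1)\cdot(-1)$, giving $\p(\Delta_\varphi)=3$; after the at-most-four point blow-ups and the crepant extractions the genus can only go down by the local $\delta$-invariants of the (node/cusp) singularities that $\Delta_\varphi$ acquires at the images of the Gorenstein points and at $o$, and it is here that the numerical bound ``at most $4$'' is used. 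The key is then to apply Theorem~\xref{thm:sho}: the double cover $\hat\Delta_\pi\to\Delta_\pi$ arising from a standard conic bundle is never split (the discriminant is connected, by \cite{Artin-Mumford-1972}, since $Y\sim_{\mathrm{bir}}\tilde X$ would be rational), and if $\Pr$ were a sum of Jacobians then, after decomposing via Remark~\xref{rem:Prym} to reduce to the Shokurov stability condition~(S), the curve $\Gamma$ underlying the essential Prym component would have to be hyperelliptic, trigonal, quasi-trigonal, or a plane quintic with even theta characteristic. The hard part — and I expect it to be \emph{the} main obstacle — is to rule out all four of these degenerate possibilities for the specific curve $\Delta_\varphi\subset\PP(1^2,2)$ (and its blow-downs), i.e.\ to show that a general enough (indeed \emph{every}, under our hypotheses) member of $|-2K_{\PP(1^2,2)}|$ with only mild singularities is not of Shokurov's exceptional type. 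This I would do by a gonality / Clifford-index computation: the natural $2:1$ projection $\PP(1^2,2)\dashrightarrow\PP^1$ (forgetting the weight-$2$ coordinate) and the degree-$2$ map to a quadric cone exhibit $\Delta_\varphi$ with a pencil of degree $4$, and one checks that $\Delta_\varphi$ is neither trigonal nor hyperelliptic by estimating $\dim|K_{\Delta_\varphi}|$ and the number of $g^1_2$'s or $g^1_3$'s, using that $\p=3$ forces a trigonal curve to be non-hyperelliptic of a very constrained shape; the plane-quintic case ($\p=6$) is excluded by the genus count once the Gorenstein points number $\le 4$.

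Finally, I would assemble these pieces: assuming $\tilde X$ rational, Theorem~\xref{thm:J-Pr} forces $\Pr(\hat\Delta_\pi/\Delta_\pi)$ to be a sum of Jacobians; Theorem~\xref{thm:sho} together with Remark~\xref{rem:Prym} (to handle the case where~(S) fails, by peeling off rational tails and bridges created in the resolution of the singular fibers) then forces $\Delta_\pi$ — equivalently, after recording the $\delta$-invariants, the curve $\Delta_\varphi$ itself — to be hyperelliptic, trigonal, quasi-trigonal, or an even plane quintic; and the gonality analysis of the previous paragraph contradicts this. A few auxiliary checks are needed along the way: that $\pi$ is genuinely \emph{standard} (i.e.\ $Y$ smooth, $\uprho(Y/T)=1$, $-K_Y$ relatively ample), which follows step-by-step from the conclusions of Theorem~\xref{thm:SL-CB}; that the discriminant curve of $\pi$ is a normal crossing curve and connected; and that the component-intersection hypotheses of Lemma~\xref{prop:cb} hold, which is what ultimately guarantees condition~(S) can be arranged. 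Modulo the degenerate-curve exclusion, which is the real content, the rest is bookkeeping with the links already provided in Section~4 and Section~5.
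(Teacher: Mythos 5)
Your overall strategy (resolve the singular fibers by the Sarkisov links of Theorem~\xref{thm:SL-CB}, arrive at a standard conic bundle, then invoke Theorems~\xref{thm:J-Pr} and~\xref{thm:sho}) is indeed the paper's strategy, but the step you yourself identify as ``the real content'' --- excluding the hyperelliptic, trigonal, quasi-trigonal and plane-quintic cases --- is not actually carried out, and the sketch you give for it rests on a wrong computation. The surface $\PP(1^2,2)$ is a del Pezzo surface of degree $8$, not $2$: the positive generator $h$ of $\Cl(\PP(1^2,2))$ has $h^2=1/2$ and $-K=4h$, so $K^2=8$, and adjunction gives $2\p(\Delta_{\varphi})-2=2K^2=16$, i.e.\ $\p(\Delta_{\varphi})=9$ (the paper's final remark confirms genus $9$ in the general case). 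Your value $\p=3$ and the ensuing gonality/Clifford-index discussion (the ``pencil of degree $4$'', the plane-quintic exclusion ``by the genus count'') therefore do not apply; for a curve of arithmetic genus about $9$ none of the four exceptional types is excluded by such soft counting, and a genuine argument is required. The paper supplies it in Lemma~\xref{lemma:DP-curves-2}: after pushing the discriminant down to the anticanonical Du Val model $\bar S$ of the weak del Pezzo $\tilde S$, the curve $\bar\Delta\in|-2K_{\bar S}|$ is canonically embedded by the restriction of $\Phi_{|-K_{\bar S}|}$, and since a Du Val del Pezzo of degree $\ge 4$ is projectively normal and cut out by quadrics, $\bar\Delta$ is an intersection of quadrics, hence has no $3$-secant lines (killing trigonal and quasi-trigonal) and cannot lie on a Veronese surface (killing the plane quintic); non-hyperellipticity comes from the canonical map being an embedding.

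Two further points. First, your treatment of Shokurov's condition (S) --- ``peeling off rational tails and bridges'' via Remark~\xref{rem:Prym} --- is too vague to constitute a proof: the paper instead contracts $\tilde S$ by $|-K_{\tilde S}|$ (Lemma~\xref{lemma:DP-DP}) and proves condition (S) for the pushed-down curve by a Hodge-index argument (Lemma~\xref{lemma:surf}), checking also that the pushdown stays nodal and that the Prym is unchanged under the contraction. Second, you misplace where the hypothesis ``at most $4$'' Gorenstein points is used: it is not a genus bookkeeping device but precisely what guarantees $K_{\tilde S}^2=8-l\ge 4$, the threshold needed both for the Hodge-index lemma and for the quadric-generation property of the anticanonical model. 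Without these ingredients your argument has a genuine gap at its central step.
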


First of all note that by Proposition \ref{prop:cb-index2} the $\QQ$-conic 
bundle $\varphi$ over $o$ is of type \typeci{T}{2}
or \typec{ID_1^\vee} (see Examples \xref{ex:T} and \xref{ex:ID}). 
Then by Theorem~\ref{thm:SL-CB} there exists the following Sarkisov link
\begin{equation}
\label{eq:SL-CB-ns}
\vcenter{
\xymatrix@R=1em{
\check X\ar[d]_{p}\ar@{-->}[r]^{\chi}& Y\ar[d]^{\pi }
\\
\tilde X\ar[d]_{\varphi} & \FF_2\ar[ld]_{\mu}
\\
\PP(1^2,2)
} }
\end{equation}
where $p$ is an extremal blowup of a non-Gorenstein point, $\chi$ is a flip, $\mu$ is the minimal 
resolution of the point $o\in 
\PP(1^2,2)$, and
$\pi $ is a $\QQ$-conic bundle with 
at worst nodal or cusp singularities.
Let $\Sigma:=\mu^{-1}(o)$ be the 
negative section of $\FF_2$. Again by Theorem~\ref{thm:SL-CB} the variety $Y$ is smooth along $\pi^{-1}(\Sigma)$.
Moreover, if $\varphi$ over $o$ is of type \typeci{T}{2}, then $\Delta_{\varphi}$ does 
not pass through $o$ and
$\Delta_{\pi}\cap \Sigma=\varnothing$, and if $\varphi$ over $o$ is of type 
\typec{ID_1^\vee}, then $\Delta_{\varphi}\ni o$,
the pair $(S,\Delta_{\varphi})$ is lc at $o$, and
$\Delta_{\pi}\supset \Sigma$. In both cases by~\eqref{eq:K-Delta1} and~\eqref{eq:log-crepant} we have
\begin{equation}
\label{eq:K-Delta2}
\Delta_{\pi} \sim -2K_{\FF_2}. 
\end{equation} 

Assume that $\tilde X$ has no Gorenstein singularities. Then $Y$ is smooth and $\pi: 
Y\to \FF_2$ is a standard conic bundle
with $|2K_{\FF_2}+ \Delta_{\pi}|\neq \varnothing$. In this case $Y$ is not 
rational by \cite[Theorem~10.2]{Shokurov:Prym}.

From now on we assume that $X$ has $l$ Gorenstein singularities with $1\le l\le 4$.
The transformation $\tilde X\gets \check X \dashrightarrow Y$ does not affect 
the Gorenstein singular locus. 
Hence $Y$ has the same collection of Gorenstein singular points, say $P_1,\dots,P_l\in Y$. 
Let $s_i:= \pi(P_i)$, \ $i\in \{1,\dots,l\}$. By Lemma~\ref{lemma:new} the 
$\QQ$-conic bundle $\pi$ 
is of type \typeGor{n_i} (with $n_i\le 2$) near each fiber $\pi^{-1}(s_i)$.
In particular, $\Delta_{\pi}$ has multiplicity $2$ at $s_1,\dots,s_l$.
Moreover, $s_i\notin \Sigma$. 

Now, apply Theorem \ref{thm:SL-CB} in a neighborhood of each fiber $\pi^{-1}(s_i)$.
We obtain the following diagram extending \eqref{eq:SL-CB-ns}:
\begin{equation}
\label{eq:SL-CB-Go}
\vcenter{
\xymatrix@R=1em{
Y'\ar[d]_{\sigma}\ar@{-->}[r]^{\xi}& \tilde Y\ar[d]^{\tilde \pi}
\\
Y\ar[d]_{\pi} & \tilde S\ar[ld]_{\alpha}
\\
\FF_2
} }
\end{equation}
where $\sigma$ is the blowup of the points $P_1,\dots,P_l\in Y$, $\xi$ is a 
composition of flops, 
$\alpha$ is the blowup of the points $s_1,\dots,s_l\in \FF_2$,
and $\tilde \pi$ is a standard conic bundle.
Moreover, $\Delta_{\tilde \pi}$ is the proper transform of $\Delta_{\pi}$ and by~\eqref{eq:K-Delta2} and~\eqref{eq:log-crepant} we have
\begin{equation}
\label{eq:K-Delta3}
\Delta_{\tilde \pi}\sim -2K_{\tilde S}.
\end{equation} 
The next lemma shows that $\tilde S$ is a weak del Pezzo surface.

\begin{lemma}
\label{lemma:DP-DP}
In the above notation, $\tilde S$ is a weak del Pezzo surface of degree 
\[
d:=8-l\ge 4
\]
and 
the linear system $|-K_{\tilde S}|$ defines a birational crepant contraction
\begin{equation}
\label{eq:DP-DP}
\Phi_{|-K_{\tilde S}|}:\tilde S\longrightarrow \bar S\subset \PP^{d},
\end{equation} 
where 
$\bar S$ is an anticanonically embedded del Pezzo surface with Du Val singularities
and $K_{\bar S}^2=d$.
\end{lemma}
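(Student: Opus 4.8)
The plan is to first establish that $-K_{\tilde S}$ is nef, and then to read off the weak del Pezzo property and the anticanonical model from the classical theory. Recall from \eqref{eq:SL-CB-Go} that $\tilde\pi\colon\tilde Y\to\tilde S$ is a \emph{standard} conic bundle, so its discriminant $\Delta:=\Delta_{\tilde\pi}$ is an effective reduced normal crossing curve, and by \eqref{eq:K-Delta3} we have $\Delta\sim-2K_{\tilde S}$ (in particular $\Delta\neq\varnothing$, since $(-2K_{\tilde S})^2=4d>0$). To prove nefness, suppose for contradiction that $-K_{\tilde S}\cdot Z<0$ for some irreducible curve $Z\subset\tilde S$. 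Then $\Delta\cdot Z=-2K_{\tilde S}\cdot Z<0$, which forces $Z$ to be a component $\Delta_1$ of $\Delta$; moreover $\Delta_1$ is smooth, of some genus $g$, because $\Delta$ has normal crossings. Setting $D:=\Delta-\Delta_1\geq0$, which has no component in common with $\Delta_1$, we have $D\cdot\Delta_1\geq0$, and using $\Delta\sim-2K_{\tilde S}$ together with adjunction $\Delta_1^2=2g-2-K_{\tilde S}\cdot\Delta_1$ we compute
\[
D\cdot\Delta_1=\Delta\cdot\Delta_1-\Delta_1^2=2\,(-K_{\tilde S}\cdot\Delta_1)-\Delta_1^2=(-K_{\tilde S}\cdot\Delta_1)-2g+2\leq1-2g,
\]
since $-K_{\tilde S}\cdot\Delta_1\leq-1$. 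Hence $g=0$ and $D\cdot\Delta_1\leq1$. But then $\Delta_1$ is a smooth rational component of the discriminant of the standard conic bundle $\tilde\pi$, so Lemma~\ref{prop:cb} gives $\Delta_1\cdot(\Delta-\Delta_1)=D\cdot\Delta_1\geq2$, a contradiction. Thus $-K_{\tilde S}$ is nef.

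Since $\alpha$ is the blowup of $\FF_2$ at the $l$ points $s_1,\dots,s_l$ and $K_{\FF_2}^2=8$, we get $(-K_{\tilde S})^2=K_{\tilde S}^2=8-l=d$, and $d\geq4$ because $l\leq4$. Therefore $-K_{\tilde S}$ is nef and big, i.e.\ $\tilde S$ is a weak del Pezzo surface of degree $d$. As $d\geq3$, the classical theory of weak del Pezzo surfaces applies: Kawamata--Viehweg vanishing for $-K_{\tilde S}=K_{\tilde S}+(-2K_{\tilde S})$ gives $H^i(\tilde S,-K_{\tilde S})=0$ for $i>0$, whence $h^0(\tilde S,-K_{\tilde S})=1+(-K_{\tilde S})^2=d+1$ by Riemann--Roch; the system $|-K_{\tilde S}|$ is base point free; and the anticanonical morphism $\Phi_{|-K_{\tilde S}|}\colon\tilde S\to\bar S\subset\PP^d$ is birational onto an anticanonically embedded del Pezzo surface $\bar S$ that contracts precisely the finitely many $(-2)$-curves of $\tilde S$ (the curves on which $-K_{\tilde S}$ is trivial) and hence has only Du Val singularities. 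These being crepant curves, $K_{\tilde S}=\Phi_{|-K_{\tilde S}|}^{*}K_{\bar S}$, and therefore $K_{\bar S}^2=K_{\tilde S}^2=d$.

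I expect the main obstacle to be the nefness of $-K_{\tilde S}$: a priori the points $s_i\in\FF_2$ blown up by $\alpha$ need not lie in almost general position, so without further input one cannot exclude a $(-n)$-curve with $n\geq3$ (or a higher-genus curve violating nefness) on $\tilde S$. The key point is that any such curve must lie inside the discriminant $\Delta_{\tilde\pi}\sim-2K_{\tilde S}$, and then the non-splitting of the double cover associated to $\tilde\pi$ over each rational component of the discriminant --- the content of Lemma~\ref{prop:cb} --- rules it out. In short, the existence of the standard conic bundle $\tilde\pi$ over $\tilde S$ constrains the configuration of the blown-up points enough to force the weak del Pezzo property.
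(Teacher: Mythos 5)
Your proof is correct and follows essentially the same route as the paper: nefness of $-K_{\tilde S}$ is forced by writing the offending curve as a component $\Delta_1$ of $\Delta_{\tilde\pi}\sim -2K_{\tilde S}$, combining adjunction with $D\cdot\Delta_1\ge 0$, and contradicting Lemma~\ref{prop:cb}, after which the weak del Pezzo and anticanonical-model statements are classical. One small caution: a component of a nodal (``normal crossing'') discriminant need not be smooth a priori, so the adjunction step should be run with the arithmetic genus $\p(\Delta_1)$ --- the same computation then gives $\p(\Delta_1)=0$, hence smoothness of $\Delta_1$, exactly as in the paper --- rather than deducing smoothness directly from the normal crossing property before applying Lemma~\ref{prop:cb}.
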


\begin{proof}
Assume that for some irreducible curve $\tilde C\subset \tilde S$ we have
$K_{\tilde S} \cdot \tilde C>0$. Then $\Delta_{\tilde{\pi}} \cdot \tilde C<0$, $\tilde C$ 
is a component of $ \Delta_{\tilde{\pi}}$, and $\tilde C^2<0$. Write $ \Delta_{\tilde{\pi}}=\tilde 
C+ \tilde{D}$,
where $ \tilde{D}$ is effective and does not contain $\tilde C$ as a component.
Then 
\[\textstyle
2\p(\tilde C)-2= K_{\tilde S} \cdot \tilde C+\tilde C^2 =\frac 12 \tilde 
C^2-\frac 12 \tilde{D} \cdot \tilde C<0.
\]
Hence, $\p(\tilde C)=0$, $\tilde C\simeq \PP^1$, and so 
\[
4= \tilde{D} \cdot \tilde C- \tilde C^2 > \tilde{D} \cdot \tilde C- \tilde C^2 + 
\Delta_{\tilde{\pi}} \cdot \tilde C =2 \tilde{D} \cdot \tilde C.
\]
On the other hand, since $\tilde C$ is a smooth rational curve, 
by Lemma~\ref{prop:cb} we have $ \tilde{D} \cdot \tilde C\ge 2$, a contradiction.

Thus the divisor $-K_{\tilde S}$ is nef. Since $(-K_{\tilde S})^2=d>0$, it is big.
Then the linear system $|-K_{\tilde S}|$ is base point free and defines a crepant contraction 
(see e.g. \cite[Theorem 8.3.2]{Dolgachev-ClassicalAlgGeom}).
\end{proof}

In order to apply Theorem~\ref{thm:sho} to $\tilde \pi: \tilde Y\to\tilde S$
we want $\Delta_{\tilde \pi}$ to 
satisfy the condition (S). However, this is not true in general. So, 
we need additional arguments.

\begin{lemma}
\label{lemma:surf}
Let $F$ be a smooth surface such that $-K_F$ is nef and $K_F^2\ge 4$,
and let $D\in |-2K_F|$ be a reduced divisor on $F$.
Suppose that there exists a decomposition 
\[
D=D'+D'', 
\]
where $D'$, $D''$ are
effective divisors with $D'\cdot D''=2$.
Then
either $-K_{F}\cdot D'=0$ or $-K_{F}\cdot D''=0$.
\end{lemma}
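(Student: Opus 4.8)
The plan is to analyze the numerical constraints forced by $D \in |-2K_F|$ together with $D' \cdot D'' = 2$, using adjunction on both pieces. Write $K := -K_F$; since $-K_F$ is nef and $K_F^2 \ge 4$, the surface $F$ is a weak del Pezzo of degree $\ge 4$. The key relations are $D' + D'' \sim 2K$, hence $D'^2 + 2 D'\cdot D'' + D''^2 = 4K^2$, i.e.
\[
D'^2 + D''^2 = 4K^2 - 4.
\]
Also $-K_F \cdot D' + (-K_F) \cdot D'' = -K_F \cdot D = 2K^2$. Set $a := -K_F \cdot D' \ge 0$ and $b := -K_F \cdot D'' \ge 0$ (both nonnegative since $-K_F$ is nef and $D', D''$ effective); then $a + b = 2K^2$. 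The goal is $a = 0$ or $b = 0$.

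\textbf{Arithmetic genus bounds.} First I would bound $\p(D')$ and $\p(D'')$ from above. Since $D$ is reduced and $D = D' + D''$, each of $D', D''$ is reduced; moreover $\p(D) = K^2 + 1$ by adjunction ($2\p(D) - 2 = D\cdot(D + K_F) = 2K\cdot(2K - K) = 2K^2$). There is a standard additivity: for $D = D' + D''$ with no common component,
\[
\p(D) = \p(D') + \p(D'') + D'\cdot D'' - 1 = \p(D') + \p(D'') + 1,
\]
so $\p(D') + \p(D'') = K^2$. On the other hand, adjunction gives $2\p(D') - 2 = D'\cdot(D' + K_F) = D'^2 - a$ and likewise $2\p(D'') - 2 = D''^2 - b$. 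Adding, $2(\p(D') + \p(D'')) - 4 = (D'^2 + D''^2) - (a+b) = (4K^2 - 4) - 2K^2 = 2K^2 - 4$, consistent with the above — so no new info there, but the individual equations $D'^2 = 2\p(D') - 2 + a$ and $D''^2 = 2\p(D'') - 2 + b$ will be the workhorses.

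\textbf{The main argument: ruling out $a, b \ge 1$.} Suppose for contradiction $a \ge 1$ and $b \ge 1$. The idea is that both $D'$ and $D''$ then behave like "positive" divisors, and one derives a contradiction with $D'\cdot D'' = 2$ via the Hodge index theorem applied to the rank-considerations on $\mathrm{NS}(F)$, or more concretely via the following: from $D' \sim 2K - D''$ we get $D'^2 = 4K^2 - 4a' - D''^2$... let me instead use Hodge index directly. On the lattice $\mathrm{NS}(F)$ with the big and nef class $-K_F$, for any class $Z$ one has $(-K_F)^2 \cdot Z^2 \le ((-K_F)\cdot Z)^2$. Apply this to $Z = D' - D''$: then $Z \sim 2D' - 2K$ (since $D' + D'' \sim 2K$), so $(-K_F)\cdot Z = 2a - 2K^2$ and $Z^2 = D'^2 - 2D'\cdot D'' + D''^2 = (4K^2 - 4) - 4 = 4K^2 - 8$. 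Hodge index gives $K^2(4K^2 - 8) \le (2a - 2K^2)^2 = 4(a - K^2)^2$, i.e. $K^2(K^2 - 2) \le (a - K^2)^2$. Since $a + b = 2K^2$ and by symmetry we may assume $a \le K^2 \le b$, so $a - K^2 = -(K^2 - a)$ with $0 \le K^2 - a \le K^2$; the inequality reads $K^2(K^2-2) \le (K^2 - a)^2$, forcing $(K^2 - a)^2 \ge K^2(K^2 - 2)$. I expect this to pin $a$ into a very small range (roughly $a \le K^2 - \sqrt{K^2(K^2-2)}$, which for $K^2 \ge 4$ gives $a \le 1$ or $a \le 2$), after which a short case-check using the genus/self-intersection relations and $D'\cdot D'' = 2$ finishes it — in particular showing the only way out is $a = 0$.

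\textbf{Main obstacle.} The delicate point is handling the small residual cases where $a \in \{1, 2\}$ (and symmetrically $b$): there one must use that $D'$ is reduced with $\p(D') \ge 0$ together with $D'\cdot D'' = 2$ to rule out $a \ge 1$, rather than pure numerics. Concretely, if $a = 1$ then $D'^2 = 2\p(D') - 1$ is odd; combined with $D' \cdot D'' = 2$ and $D' + D'' \sim 2K$ one computes $D' \cdot K = $ half-integer issues or parity contradictions, or one shows $-K_F \cdot D' = 1$ forces $D'$ to contain a $(-1)$-curve or a line-like component whose intersection with $D''$ already exceeds $2$. I would organize this as: (1) numerics via Hodge index to bound $a$; (2) a parity observation $D'^2 \equiv a \pmod 2$ from adjunction; (3) an explicit contradiction in the cases $a = 1, 2$ using effectivity and $D'\cdot D'' = 2$ and $K^2 \ge 4$. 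Getting step (3) clean and uniform across $K^2 \in \{4,\dots,8\}$ is where the real work lies.
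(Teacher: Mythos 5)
Your skeleton is sound and is essentially the paper's argument: both run on exactly two ingredients, the Hodge index theorem and the parity of $K_F\cdot D'+D'^2$ coming from the genus formula. The only real difference is mechanical. You apply Hodge index to $Z=D'-D''$ against the nef and big class $-K_F$; the paper instead notes $D'^2+D''^2=4K_F^2-4\ge 12$, assumes (after relabelling) $D''^2>0$, and applies Hodge index in the form $4=(D'\cdot D'')^2\ge D'^2D''^2$ to get $D'^2\le 0$; then the identity $2=D'\cdot D''=-2K_F\cdot D'-D'^2$ forces $K_F\cdot D'=-1$ and $D'^2=0$, contradicting the evenness of $K_F\cdot D'+D'^2$.

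The gap is that you stop exactly where the proof should end: the ``main obstacle'' you flag (the residual cases $a\in\{1,2\}$, where $a:=-K_F\cdot D'$ is the smaller of the two degrees) is not an obstacle, and leaving it as ``the real work'' means your proof is not complete as written. It closes in two lines with ingredients you already have on the table. First, $a=2$ is already excluded by your own Hodge inequality $K_F^2(K_F^2-2)\le (K_F^2-a)^2$: for $2\le a\le K_F^2$ one has $(K_F^2-a)^2\le (K_F^2-2)^2=K_F^2(K_F^2-2)-(2K_F^2-4)<K_F^2(K_F^2-2)$ since $K_F^2\ge 4$, so in fact $a\le 1$ uniformly, with no case distinction over $K_F^2\in\{4,\dots,8\}$. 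Second, $a=1$ dies instantly from the hypothesis $D'\cdot D''=2$ itself: since $D''\sim -2K_F-D'$, one gets $2=D'\cdot D''=2a-D'^2$, i.e.\ $D'^2=2a-2$ is even, and then your parity observation $D'^2\equiv a\pmod 2$ forces $a$ to be even. Hence $a\le 1$, $a\ge 0$ and $a$ even give $a=0$, which is the assertion; no appeal to $(-1)$-curves, reducedness of $D'$, or component geometry is needed.
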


\begin{proof}
Assume that $-K_{F}\cdot D'>0$ and $-K_{F}\cdot D''>0$.
We have
\[
(-2 K_F)^2=D'^2+D''^2+4\ge 16,
\]
hence we may assume that $D''^2>0$.
Then by the Hodge index theorem 
\[
4=(D'\cdot D'')^2\ge D'^2\, D''^2.
\]
This implies that $D'^2\le 0$. Hence,
\[
2=D'\cdot D''=D'\cdot (-2K_F -D')=-2K_F\cdot D' -D'^2\ge 2.
\]
We obtain $K_F\cdot D'=-1$, $D'^2=0$.
On the other hand,
by the genus formula the number $K_F\cdot D'+D'^2$ must be even, a contradiction.
\end{proof}

\begin{corollary}
In the notation of~\eqref{eq:SL-CB-Go} suppose that the discriminant curve $\Delta_{\tilde{\pi}}$
admits a decomposition $ \Delta_{\tilde{\pi}}=\Delta_{\tilde{\pi}}'+\Delta_{\tilde{\pi}}''$
is a sum of effective divisors with $\Delta_{\tilde{\pi}}'\cdot\Delta_{\tilde{\pi}}''=2$.
Then
the anticanonical morphism \eqref{eq:DP-DP}
contracts either $ \Delta_{\tilde{\pi}}'$ or $ \Delta_{\tilde{\pi}}''$.
\end{corollary}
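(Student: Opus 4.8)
The plan is to deduce this corollary directly from Lemma~\ref{lemma:surf} by taking $F=\tilde S$ and $D=\Delta_{\tilde\pi}$. First I would record that the hypotheses of the lemma are met: $\tilde S$ is smooth (it is obtained from $\FF_2$ by the blowup $\alpha$ of the distinct reduced points $s_1,\dots,s_l$, which are smooth points of $\FF_2$), $-K_{\tilde S}$ is nef and $K_{\tilde S}^2=d=8-l\ge 4$ by Lemma~\ref{lemma:DP-DP}, the curve $\Delta_{\tilde\pi}$ is reduced because the discriminant curve of a standard conic bundle is a reduced normal-crossing divisor, and $\Delta_{\tilde\pi}\sim -2K_{\tilde S}$ by \eqref{eq:K-Delta3}. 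Thus the given decomposition $\Delta_{\tilde\pi}=\Delta_{\tilde\pi}'+\Delta_{\tilde\pi}''$ with $\Delta_{\tilde\pi}'\cdot\Delta_{\tilde\pi}''=2$ is exactly a decomposition of the type considered in Lemma~\ref{lemma:surf}.

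Then I would apply Lemma~\ref{lemma:surf} to conclude that $-K_{\tilde S}\cdot\Delta_{\tilde\pi}'=0$ or $-K_{\tilde S}\cdot\Delta_{\tilde\pi}''=0$; say the former. Since $-K_{\tilde S}$ is nef and big, the morphism $\Phi_{|-K_{\tilde S}|}\colon\tilde S\to\bar S$ of \eqref{eq:DP-DP} contracts precisely the curves having zero intersection with $-K_{\tilde S}$. The divisor $\Delta_{\tilde\pi}'$ is effective with $-K_{\tilde S}\cdot\Delta_{\tilde\pi}'=0$, so each of its irreducible components $C$ satisfies $-K_{\tilde S}\cdot C=0$ (nefness forces every component to have non‑negative intersection, and they sum to zero). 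Hence every component of $\Delta_{\tilde\pi}'$ is contracted by $\Phi_{|-K_{\tilde S}|}$, i.e. the anticanonical morphism contracts $\Delta_{\tilde\pi}'$, which is the assertion. The symmetric case gives that it contracts $\Delta_{\tilde\pi}''$ instead.

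There is essentially no obstacle here: the corollary is a transcription of Lemma~\ref{lemma:surf} into the geometric language of the anticanonical contraction, and the only minor point worth spelling out is the passage from ``$-K_{\tilde S}\cdot\Delta_{\tilde\pi}'=0$'' to ``every component of $\Delta_{\tilde\pi}'$ is $\Phi_{|-K_{\tilde S}|}$‑exceptional'', which follows immediately from nefness of $-K_{\tilde S}$. I would therefore keep the proof to two or three lines, verifying the hypotheses of Lemma~\ref{lemma:surf} and invoking the description of $\Phi_{|-K_{\tilde S}|}$ in Lemma~\ref{lemma:DP-DP}.
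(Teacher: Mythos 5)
Your proof is correct and follows exactly the route the paper intends: the corollary is stated immediately after Lemma~\ref{lemma:surf} with no separate argument, precisely because it is obtained by applying that lemma with $F=\tilde S$, $D=\Delta_{\tilde\pi}$ (hypotheses supplied by Lemma~\ref{lemma:DP-DP}, \eqref{eq:K-Delta3}, and reducedness of the discriminant of a standard conic bundle) and then using nefness of $-K_{\tilde S}$ to pass from $-K_{\tilde S}\cdot\Delta_{\tilde\pi}'=0$ to contraction of every component by $\Phi_{|-K_{\tilde S}|}$. The only cosmetic caveat is that some of the $s_i$ may coincide (two Gorenstein points in one fiber), so $\alpha$ may involve infinitely near points, but this does not affect smoothness of $\tilde S$ or the bound $K_{\tilde S}^2\ge 4$, which you in any case take from Lemma~\ref{lemma:DP-DP}.
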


\begin{proof}[Proof of Theorem~\xref{thm:cb}]
We use the notation of \eqref{eq:SL-CB-Go}.
By Lemma~\ref{lemma:DP-DP}\ $\tilde S$ is a weak del Pezzo surface of degree $d\ge 4$,
the linear system $|-K_{\tilde S}|$ is base point free and defines a crepant contraction 
$\psi: \tilde S\to \bar S$, where 
$\bar S$ is a del Pezzo surface with Du Val singularities. Let $\bar \Delta:=\psi_*\Delta_{\tilde{\pi}}$. 
By~\eqref{eq:K-Delta3} we have
\begin{equation*}
\bar \Delta\sim -2K_{\bar S}. 
\end{equation*} 
For any $\psi$-exceptional curve $\tilde E$ we have $K_{\tilde S}\cdot \tilde E= \Delta_{\tilde{\pi}}\cdot \tilde E=0$.
Hence, $\psi$ is log crepant with respect to $K_{\tilde S}+ \Delta_{\tilde{\pi}}$,
the pair $(\bar S,\bar \Delta)$ is lc, and the curve $\bar \Delta$ has at worst nodal
singularities.
By Lemma~\ref{lemma:surf} the curve $\bar \Delta$ satisfies the condition (S) of Theorem~\ref{thm:sho}.
Let $\tilde \tau: \hat \Delta_{\tilde{\pi}}\to 
\Delta_{\tilde{\pi}}$ be the double cover associated to $\tilde{\pi}$.
Then there is a similar double cover $\bar \tau: \hat \Delta\to \bar \Delta$ 
that coincides with $\tilde \tau$ over
$\Delta_{\pi}\setminus \Exc(\psi)$ and by Remark~\ref{rem:Prym} we have a natural isomorphism of Prym varieties
\[
\Pr( \hat \Delta_{\tilde{\pi}}/\Delta_{\tilde{\pi}} ) \simeq \Pr(\hat 
\Delta/\bar \Delta).
\]
By Lemma~\ref{lemma:DP-curves-2} below
the curve $\bar \Delta$ is neither hyperelliptic nor trigonal nor quasi-trigonal, and also 
$\bar \Delta$ is not a plane quintic.
Then by Theorem \ref{thm:sho} the Prym variety $ \Pr(\hat
\Delta/\bar \Delta)$ is not a sum of Jacobians of curves, hence $\tilde Y$ is not rational by Theorem~\ref{thm:J-Pr}.
\end{proof}

\begin{lemma}
\label{lemma:DP-curves-2}
Let $F$ be a del Pezzo surface with Du Val singularities and let $\Xi\in 
|-2K_F|$ be a normal crossing curve.
If $K_F^2\ge 3$, then the curve $\Xi$ is not hyperelliptic.
If $K_F^2\ge 4$, then the curve $\Xi$ is neither trigonal
nor quasi-trigonal and it is not a plane quintic.
\end{lemma}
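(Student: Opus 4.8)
The plan is to study the curve $\Xi\in|-2K_F|$ on a Du Val del Pezzo surface $F$ via its canonical embedding. First I would compute the relevant invariants: by adjunction $K_\Xi = (K_F+\Xi)|_\Xi = -K_F|_\Xi$, so $\deg K_\Xi = K_F\cdot(-K_F\cdot 2)\cdot(-1)$, more precisely $2\p(\Xi)-2 = (K_F+\Xi)\cdot\Xi = (-K_F)\cdot(-2K_F) = 2K_F^2$, giving arithmetic genus $\p(\Xi) = K_F^2+1$. Since $\Xi$ is a normal crossing (hence reduced, Gorenstein, with only nodes) curve in $|-2K_F|$ and $-K_F$ is ample, one checks that $\Xi$ is connected and satisfies condition (S) in the relevant range, so it is a stable curve and the dualizing sheaf $\omega_\Xi \cong \OOO_F(-K_F)|_\Xi$ is very ample as soon as $-K_F$ is very ample and separates the needed points/tangents on $\Xi$; for $K_F^2\ge 3$ the anticanonical map embeds $F$ (after the crepant contraction to the anticanonically embedded model, or directly when $F$ is already anticanonically embedded) as a surface $\bar F\subset\PP^{K_F^2}$, and $\Xi$ sits inside $\bar F$ as a canonically embedded curve of genus $K_F^2+1$.

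The heart of the argument is then: a canonically embedded stable curve lying on a surface of small degree cannot be hyperelliptic, trigonal, quasi-trigonal, or a plane quintic, because each of those special curves has a canonical image of a very restricted shape. Concretely, a hyperelliptic (stable) curve has canonical image a rational normal curve (a $g^1_2$ composed with the Veronese), which does not lie on a del Pezzo surface of degree $\ge 3$; a trigonal curve has canonical image lying on a rational normal scroll, and a quasi-trigonal curve similarly (Maroni/Petri-type analysis), and one checks the intersection of $\bar F\subset\PP^{K_F^2}$ with such a scroll is impossible for $K_F^2\ge 4$; a plane quintic has genus $6$, forcing $K_F^2 = 5$, and its canonical embedding is the Veronese $\PP^2\hookrightarrow\PP^5$ composed with the plane embedding — but then $\Xi$ would be a quintic section sitting inside a degree-$5$ del Pezzo $\bar F\subset\PP^5$, and comparing the two surfaces through which $\Xi$ factors (the Veronese $\PP^2$ and $\bar F$) gives a contradiction since $\bar F$ is not the Veronese. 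The cleanest uniform way to run these exclusions is: restrict to each component and use that $\Xi$ generates the canonical system, so the quadrics through $\Xi$ in $\PP^{\p(\Xi)-1}$ form a space of dimension $\binom{K_F^2+1}{2} - (\text{something})$; by Petri's theorem (extended to stable/Gorenstein curves satisfying (S), as in Shokurov's setup) $\Xi$ is cut out by quadrics unless it is trigonal or a plane quintic, and being cut out by quadrics it cannot be hyperelliptic; then one treats the trigonal/quasi-trigonal and plane-quintic cases by the scroll/Veronese incidence computations above.

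I expect the main obstacle to be the trigonal and quasi-trigonal exclusions: one must show that a del Pezzo surface $F$ with $K_F^2\ge 4$ cannot carry a divisor $\Xi\sim -2K_F$ whose canonical model lies on a rational normal scroll. The natural approach is to suppose $\Xi$ trigonal, let $g^1_3$ be the degree-$3$ pencil, and analyze the induced pencil on $F$ (or on its minimal resolution): a $g^1_3$ on $\Xi$ of genus $K_F^2+1\ge 5$ would pull back from a pencil $|C|$ on $F$ with $C\cdot\Xi = 3$, i.e. $-2K_F\cdot C = 3$, which is already odd and impossible since $-2K_F\cdot C$ is even — so a genuinely trigonal $\Xi$ would have its $g^1_3$ \emph{not} induced from $F$, and one rules this out by a base-point-free pencil trick / Castelnuovo-type argument showing any $g^1_3$ on a curve on a del Pezzo of degree $\ge 4$ must come from the surface. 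Quasi-trigonal curves (a $g^1_2$ on a genus-one subcurve plus rational tails, or the standard Shokurov normal form) are handled by the same divisor-parity obstruction after decomposing $\Xi$, using connectedness and $\Xi\cdot(\text{component})$ parity. The plane-quintic and hyperelliptic cases are comparatively short once the genus is pinned down and Petri's theorem is invoked.
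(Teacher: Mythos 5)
Your setup (adjunction, $\p(\Xi)=K_F^2+1$, and the observation that the restriction of $\Phi_{|-K_F|}$ to $\Xi$ is the canonical map) matches the paper, although you gloss over the surjectivity of $H^0(F,\OOO_F(-K_F))\to H^0(\Xi,\omega_\Xi)$, which the paper gets from Kawamata--Viehweg vanishing. This does settle the hyperelliptic case: the canonical map of $\Xi$ is an embedding, so $\Xi$ cannot be hyperelliptic; your alternative justification (``a rational normal curve does not lie on a del Pezzo surface of degree $\ge 3$'') is neither needed nor correct as stated, since such surfaces do contain smooth rational curves of the relevant degrees.

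The genuine gap is in the trigonal/quasi-trigonal and plane-quintic exclusions, which you yourself identify as the main obstacle but do not prove. The appeal to Petri's theorem is circular: its exceptional cases are exactly the trigonal curves and plane quintics, i.e. the cases you must rule out, so it cannot supply ``cut out by quadrics'' there. The parity observation ($\Xi\cdot C=-2K_F\cdot C$ is even, so no pencil on $F$ restricts to a $g^1_3$) is fine, but the essential claim that \emph{every} $g^1_3$ on $\Xi$ is induced from a pencil on $F$ is left as an assertion (``base-point-free pencil trick / Castelnuovo-type argument''), and for a reducible nodal curve this is precisely where a proof would have to be given; your treatment of the quasi-trigonal case is likewise only sketched. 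For the plane quintic, saying that $\bar F$ is not the Veronese is not a contradiction: a curve may lie on two different surfaces. The paper's proof rests on one input you are missing: by Hidaka--Watanabe, for $d=K_F^2\ge 4$ the anticanonical model $F\subset\PP^{d}$ is projectively normal and an intersection of quadrics, hence (by surjectivity of the restriction of quadrics) the canonically embedded curve $\Xi\subset\PP^{d}$ is itself an intersection of quadrics. An intersection of quadrics has no $3$-secant lines, whereas trigonal and quasi-trigonal curves do by geometric Riemann--Roch; and in the plane-quintic case $\Xi\subset V\cap Q$ for the Veronese surface $V$ and some quadric $Q$ forces $\deg\Xi\le 8$, contradicting $\deg\Xi=2d=10$. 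Without this input, or a fully worked-out substitute for your scroll/pencil claims, the proposal does not establish the lemma in the trigonal, quasi-trigonal, and plane-quintic cases.
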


\begin{proof}
Let $d:=K_F^2$.
Since $d\ge 3$, the linear system $|-K_F|$ is very ample and defines 
an embedding $\Phi_{|-K_F|}:F\hookrightarrow \PP^{d}$ (see e.g. 
\cite{Hidaka-Watanabe}). 
By the adjunction formula $K_{\Xi}=(K_F+\Xi)_{\Xi}=-K_F|_{\Xi}$.
By the Kawamata-Viehweg vanishing the homomorphism 
\[
H^0(F,\OOO_F(-K_F))\longrightarrow H^0(\Xi,\OOO(K_\Xi))
\]
is surjective,
hence the restriction $\Phi_{|-K_F|}|_\Xi:\Xi \hookrightarrow \PP^d$ is 
the canonical map and $\p(\Xi)= d+1$. In particular, the curve $\Xi$ is 
not hyperelliptic.

Now assume that $\Xi$ is either trigonal or quasi-trigonal.
Then by the geometric version of the Riemann-Roch theorem the 
canonical model $\Xi \subset \PP^d=\PP^{\p(\Xi)-1}$ has $3$-secant lines.
On the other hand, it is known that the anticanonical model $F\subset \PP^d$ of 
a Du Val surface is projectively normal 
and is an intersection of quadrics if $d\ge 4$ \cite{Hidaka-Watanabe}. Hence
the homomorphism 
\[
H^0(\PP^d,\OOO_{\PP^d}(2))\longrightarrow H^0(F,\OOO(-2K_\Xi))
\]
is 
also surjective and so the curve $\Xi\subset \PP^d$ is an intersection 
of quadrics as well. In this case $\Xi$ cannot have $3$-secant lines. 
Finally, assume that $\Xi$ is a plane quintic, i.e. it admits an embedding $\Xi\subset \PP^2$ as a 
curve of degree $5$. Then $\p(\Xi)=6$ and the restriction 
of the Veronese embedding $\PP^2 \hookrightarrow \PP^5$ to $\Xi$ is also a canonical map.
Therefore, the canonical image of $\Xi$ is contained in a Veronese surface: $\Xi\subset V\subset \PP^5$.
On the other hand,
by the above, the curve $\Xi\subset \PP^5$ is an intersection of quadrics.
Therefore, $\Xi$ is contained in $V\cap Q$ for some quadric $Q$.
But then $\deg \Xi\le \deg (V\cap Q)=8$, a contradiction.
\end{proof}

\begin{proof}[Proof of Theorem~\xref{thm:main}]
If $X$ has a non-Gorenstein singularity which is not moderate, then $X$ is rational by Corollary~\ref{cor:n-mod-rat}.
Assume that all non-Gorenstein singularities are moderate. In this situation, by Theorem~\ref{thm:q=3o:main} there exists a Sarkisov link of the form
\eqref{eq:q=3o:sl}. Then we apply Theorem~\ref{thm:cb} to the $\QQ$-conic bundle $\varphi :\tilde X\to \PP(1^2,2)$.
\end{proof}

Let $\mathscr{X}_6$ be the family of all hypersurfaces
$X_6\subset\PP(1^2,2^2,3)$. Theorem~\xref{thm:main} shows, in particular, that a general member of $\mathscr{X}_6$
is non-rational.
The following proposition presents a special subfamily in $\mathscr{X}_6$ with non-rational members.
As was noticed by I. Cheltsov, this gives another proof of non-rationality of
a \emph{very general} member of $\mathscr{X}_6$ \cite[Theorem~1.8]{Kollar-1996-RC}.

Recall that a point $R$ on a cubic hypersurface $Y=Y_3\subset \PP^4$ is an \textit{Eckardt point} 
if the intersection of $Y$ with the tangent space $T_{R,Y}$ is a cone over an elliptic curve.

\begin{proposition}
\label{prop:Eckardt}
For a smooth point $P\in \PP(1^2,2^2,3)$, let $\mathscr{X}_6^P\subset \mathscr{X}_6$ be the subfamily consisting of all the members 
having at $P$ a singularity that is worse than of type \type{cA}. 
Then the following assertions hold.

\begin{enumerate}
\item
\label{prop:Eckardt0}
The singular locus of a general variety $X_6\in \mathscr{X}_6^P$ consists of 
three cyclic quotient singularities of type $\frac12(1,1,1)$ and a Gorenstein singularity of type \types{cD}{4}
at $P$.

\item
\label{prop:Eckardt1}
A general member $X_6\in \mathscr{X}_6^P$ is birational to a smooth 
cubic hypersurface $Y_3\subset \PP^4$ having an Eckardt point.

\item 
\label{prop:Eckardt2}
Conversely, any smooth cubic hypersurface $Y_3\subset \PP^4$ with an Eckardt point
is birational to a member of $\mathscr{X}_6^P$.
\end{enumerate}
\end{proposition}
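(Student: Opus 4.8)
The plan is to analyze a specific stratum of the family $\mathscr{X}_6$ determined by the local behavior at a fixed smooth point $P$, and to connect it explicitly to cubic threefolds with an Eckardt point via an elementary birational construction. First I would normalize coordinates so that $P=(0,0,0,0,1)\in\PP(1^2,2^2,3)$, which lies in the smooth locus. Imposing on the equation of $X$ that the singularity at $P$ be worse than type \type{cA} forces the weight-filtered leading part of $\phi$ at $P$ to degenerate: as in the table in Proposition~\xref{setup:q=3}, $X$ must be given by~\eqref{eq:q=3o:eq1} with $\phi_2''=0$ and $\phi_2'\neq 0$, so that the point $(0,0,0,1,0)$ is of type \type{cD/2}. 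A direct count of the local analytic normal form (completing squares in the $x_3$, $y_2$ variables, then inspecting the cubic part in the remaining coordinates) shows that for a general such member the singularity at $P$ is precisely \types{cD}{4}; the three residual $\frac12(1,1,1)$-points coming from the generic cubic $\phi(0,0,x_2,y_2,0)$ survive since that part of the equation is unaffected. This proves~\ref{prop:Eckardt0}.

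For~\ref{prop:Eckardt1}, the key observation is that when $\phi_2''=0$ the equation~\eqref{eq:q=3o:eq1} is \emph{linear} in $y_2$: it reads
\[
x_3^2+x_2^3+x_2y_2\phi_2'(x_1,y_1)+x_2\phi_4+y_2\phi_4'(x_1,y_1)+\phi_6=0,
\]
so $y_2$ can be eliminated, expressing it as a rational function of $x_1,y_1,x_2,x_3$, \emph{provided} $x_2\phi_2'+\phi_4'$ does not vanish identically — which holds generically. This realizes $X$ as birational to the hypersurface in $\PP(1^2,2,3)$ with coordinates $x_1,y_1,x_2,x_3$ obtained by clearing denominators; one checks this is a sextic in $\PP(1^3,2,3)$ after re-embedding, i.e. a double Veronese cone, or — more to the point — after the standard unprojection/projection it becomes a cubic threefold $Y_3\subset\PP^4$. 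The cleanest route is to observe that the linear system $|A_X|$ together with $|3A_X|$ (spanned by $x_3$ and the cubic monomials in the degree-one and degree-two variables) maps $X$ birationally onto a cubic $Y_3\subset\PP^4$; the \type{cD/2}-point $P$, being the centre of the $y_2$-coordinate and the locus where the projection degenerates, maps to a point $R\in Y_3$ over which the fibre structure forces $Y_3\cap T_{R,Y_3}$ to be a cone. Verifying that this cone is over an \emph{elliptic} curve — hence that $R$ is an Eckardt point — is the main obstacle: it amounts to identifying $T_{R,Y_3}\cap Y_3$ with the projectivized tangent cone of the \types{cD}{4}-singularity and checking its generic fibre is a smooth plane cubic. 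This follows from the normal form $xy+z^2w+w^3+\dots$ of \types{cD}{4} but requires a careful bookkeeping of which terms contribute to the leading cubic form.

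For the converse~\ref{prop:Eckardt2}, I would start from a smooth cubic $Y_3\subset\PP^4$ with Eckardt point $R$ and run the construction backwards: the projection from $R$ exhibits $Y_3$ as birational to a conic bundle over $\PP^2$, and the Eckardt condition means the branch locus acquires a special structure; alternatively, and more directly matching the paper's setup, the linear system of cubics through $R$ singular at $R$, suitably completed, gives a birational map $Y_3\dashrightarrow X_6$ landing in $\mathscr{X}_6^P$. Concretely, choose coordinates on $\PP^4$ so that $R=(0\!:\!0\!:\!0\!:\!0\!:\!1)$ and the Eckardt tangent hyperplane is $\{z_0=0\}$; then the cubic equation is $z_4 q(z_0,\dots,z_3)+c(z_0,\dots,z_3)=0$ with $q$ a quadric of rank constrained by smoothness and $c$ a cubic, and the Eckardt condition forces $q$ and the $z_0$-free part of $c$ to combine into a cone over an elliptic curve. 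Reintroducing a weight-$2$ variable $y_2$ dual to $z_4$ and rescaling reconstructs an equation of the form~\eqref{eq:q=3o:eq1} with $\phi_2''=0$, exhibiting a member of $\mathscr{X}_6^P$ birational to $Y_3$. The remark following the proposition — that this reproves non-rationality of a very general $X_6$ — is then immediate from the Clemens–Griffiths theorem, since a very general cubic threefold is irrational and the above birational maps move in families.
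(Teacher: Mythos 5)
Your starting point is already off: $(0,0,0,0,1)$ is \emph{not} a smooth point of $\PP(1^2,2^2,3)$ (it is the $\frac13(1,1,2,2)$-quotient point), and the point $(0,0,0,1,0)$ you then switch to lies on the index-$2$ singular locus of the ambient space. The proposition takes $P$ to be a \emph{smooth} ambient point (the paper normalizes $P=(1,0,0,0,0)$), so ``worse than \type{cA} at $P$'' is a condition on a \emph{Gorenstein hypersurface} singularity of $X$ at $P$, and has nothing to do with the table of non-Gorenstein points in Proposition~\xref{setup:q=3}. Because you misidentify the stratum as \eqref{eq:q=3o:eq1} with $\phi_2''=0$, $\phi_2'\neq 0$ (a \type{cD/2} point), your argument for \xref{prop:Eckardt0} is internally inconsistent: on that stratum the cubic form $\phi(0,0,x_2,y_2,0)=x_2^3$ is a perfect cube, so the three $\frac12(1,1,1)$ points you claim ``survive'' are not there at all, and a \type{cD/2} point is not the Gorenstein \types{cD}{4} point asserted in the statement. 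Worse, on your stratum the equation is linear in $y_2$, so every member is rational (this is exactly the elimination used in Corollary~\xref{cor:n-mod-rat}); a rational threefold cannot be birational to a smooth cubic threefold by Clemens--Griffiths, so the version of \xref{prop:Eckardt1} you are trying to prove is false, not merely incomplete. Finally, the step you yourself flag as ``the main obstacle'' --- that the tangent-hyperplane section at the image point is a cone over an \emph{elliptic} curve --- is precisely the Eckardt verification, and it is left undone; the same vagueness (``after the standard unprojection/projection'') affects your converse \xref{prop:Eckardt2}, where the weight bookkeeping does not actually land in $\PP(1^2,2^2,3)$ as described.

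For comparison, the paper's argument is much more elementary and avoids all of this. With $P=(1,0,0,0,0)$, the condition ``worse than \type{cA}'' kills the terms of affine degree $\le 2$ (other than $x_3^2$) in the chart $x_1=1$, giving the normal form \eqref{eq:cubic2}; for general coefficients Bertini gives exactly the three $\frac12(1,1,1)$ points (since $\phi_6(x_2,y_2)$ stays general) plus a \types{cD}{4} point at $P$. The birational map to the cubic requires no unprojection: in the chart $y_1\neq 0$ the equation \eqref{eq:cubic2} is literally $x_3^2+\phi(x_2,y_2,x_1)=0$ with $\phi$ a general cubic polynomial, i.e.\ an affine cubic threefold, and its closure in $\PP^4$ is smooth with an Eckardt point at the point at infinity in the $x_3$-direction (the tangent-hyperplane section is the cone over the plane cubic cut out by the leading form of $\phi$). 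The converse is obtained by writing a cubic with an Eckardt point in the affine form \eqref{eq:Eckardt} and homogenizing with respect to the weights $1,2,2,3$ by a new weight-$1$ variable $y_1$, which reproduces \eqref{eq:cubic2}. If you want to salvage your write-up, you must redo (i) with $P$ a smooth ambient point and replace your projection-from-$y_2$ construction by this chart/homogenization argument.
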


\begin{proof}
For \ref{prop:Eckardt0} and \ref{prop:Eckardt1} we may assume that $P=(1,0,0,0,0)$. Then the equation of $X_6$ can be written in the form
\begin{equation}
\label{eq:cubic3}
x_3^2+\phi_6(x_2,y_2)+y_1\gamma(x_1,y_1,x_2,y_2)+ x_1^2\delta(x_1,x_2,y_2) =0,
\end{equation}
where $\phi_6$, $\gamma$, and $\delta$ are quasi-homogeneous polynomials of degree $6$, $5$, and $4$, respectively.
If the singularity of $X$ at $P$ is worse than \type{cA}, then $\delta(1,x_2,y_2)=0$ and $\gamma(1,y_1,x_2,y_2)$ contains no terms of (usual) degree $\le 1$. Then \eqref{eq:cubic3} has the 
form 
\begin{multline}
\label{eq:cubic2}
x_3^2+\phi_6(x_2,y_2)
+x_2^2y_1\phi_1(x_1,y_1)+x_2y_2y_1\phi_1'(x_1,y_1)+y_2^2y_1\phi_1''(x_1,y_1)+
\\
+
x_2y_1^2\phi_2(x_1,y_1)+y_2y_1^2\phi_2'(x_1,y_1)
+c_3x_1^3y_1^3+c_2x_1^2y_1^4+c_1x_1y_1^5+c_0y_1^6=0,
\end{multline}
where $\phi_d$, $\phi_d'$, $\phi_d''$ are homogeneous polynomials of degree $d$ and 
$c_i$ are constants.
If $\phi_d$, $\phi_d'$, $\phi_d''$, $c_i$ are general, then by Bertini's theorem
the singular locus of $X_6$ consists of 
three points of type $\frac12(1,1,1)$ and a Gorenstein singularity of type \types{cD}{4}
at $P$.

In the affine chart $U:=\{y_1\neq 0\}\simeq \mathbb{A}^4$ the variety $X_6\cap U$
is given by the cubic equation
\begin{equation}
\label{eq:Eckardt}
x_3^2+ \phi(x_2,y_2, x_1)=0,
\end{equation} 
where $\phi$ a general polynomial of degree $3$.
It is easy to see that the closure of $X_6\cap U$ in $\PP^4$
is a smooth cubic hypersurface $Y_3\subset \PP^4$ with an Eckardt point.
This proves \ref{prop:Eckardt0} and \ref{prop:Eckardt1}.

\ref{prop:Eckardt2}
Conversely, 
consider a cubic $Y=Y_3\subset \PP^4$ with an Eckardt point. In a suitable affine coordinates it can be given by the equation \eqref{eq:Eckardt}. Assign to variables $x_1,x_2,y_2,x_3$ weights 
$1,2,2,3$, respectively and write \eqref{eq:Eckardt} in an expanded form:
\[
x_3^2+\phi_6(x_2,y_2)
+x_2^2\phi_1(x_1)+x_2y_2\phi_1'(x_1)+y_2^2\phi_1''(x_1)+
x_2\phi_2(x_1)+y_2\phi_2'(x_1)
+c_3x_1^3+c_2x_1^2+c_1x_1+c_0=0.
\]
Now, introduce a new variable $y_1$ of weight $1$ and homogenize the last equation with respect to 
prescribed weights. We obtain \eqref{eq:cubic2}.
\end{proof}

\begin{remark}
Note however that a general member $X_6\in \mathscr{X}_6$ is not birational to a cubic threefold.
Indeed, in this case the $\QQ$-conic bundle $\varphi: \tilde X\to \PP(1^2,2)$ from Theorem~\ref{thm:q=3o:main} is of type \typeci{T}{2} over the singular point of $\PP(1^2,2)$ and the discriminant 
curve $\Delta_{\varphi}$ is a smooth curve of genus $9$. Then the variety $Y$ in the diagram~\eqref{eq:SL-CB-ns}
is smooth and its Intermediate Jacobian $\J(Y)$ is isomorphic to the Prym variety 
$\Pr(\hat \Delta_{\varphi}/\Delta_{\varphi})$ which is a simple $8$-dimensional principally polarized 
abelian variety \cite{Mumford1974}, \cite{Shokurov:Prym}. Hence $\J(Y)$ cannot contain the Intermediate Jacobian of a cubic threefold as a direct summand. 
\end{remark}


\end{document}